\documentclass[11pt]{article}
\usepackage{fullpage}
\usepackage{amsmath,amsthm,amssymb,ifthen}

\newboolean{@full}
\setboolean{@full}{true}

\newcommand{\iffull}[2]{\ifthenelse{\boolean{@full}}{ #1}{ #2}}

\newtheorem{theorem}{Theorem}[section]
\newtheorem{corollary}[theorem]{Corollary}
\newtheorem{lemma}[theorem]{Lemma}

\theoremstyle{definition}
\newtheorem{definition}[theorem]{Definition}

\newdimen\pIR
\pIR= -131072sp
\newcommand\StevesR{{\rm I\kern\pIR R}}

\newcommand\symExpec{\operatorname{\mathbb{E}}\displaylimits}

\def\expec#1{\symExpec_{#1} }

\newcommand{\E}{\mathbb{E}}

\def\setof#1{\left\{#1  \right\}}

\newcommand{\R}{\mathbb{R}}

\newcommand{\mydet}[1]{\det\left(#1\right)}
%\newcommand{\charp}[2]{\chi \left[#1 \right] \left(#2 \right)}

%%%------------------------------------------
%%%  Defs just for free probability stuff
%%%

\usepackage{tikz,amstext}

\newcommand*\myrect[1]{%
  \begin{tikzpicture}
    \node[draw,rectangle,inner sep=0pt] {#1};
  \end{tikzpicture}}

\newcommand{\sqsum}{\mathbin{\text{\myrect{\textnormal{+}}}}}
\newcommand{\recsum}{\mathbin{\textnormal{\myrect{++}}}}
\newcommand{\subsquare}{\mathbb{S}}

\newcommand{\cauchy}[2]{\mathcal{G}_{#1} \left(#2 \right)}
\newcommand{\invcauchy}[2]{\mathcal{K}_{#1} \left(#2 \right)}

\newcommand{\charp}[2]{\chi_{#2} \left(#1 \right)}

\newcommand{\onevec}{\mathbf{1}}

\newcommand{\rstt}{(R_\theta\oplus I_{d-2})}

\newcommand{\dil}[1]{\begin{bmatrix} 0 & #1 \\ #1^T & 0\end{bmatrix}}
\newcommand{\block}[2]{\begin{bmatrix} #1 & 0 \\ 0 & #2\end{bmatrix}}

\title{
Interlacing Families IV: Bipartite Ramanujan Graphs of All Sizes
\thanks{
This research was partially supported by NSF grant CCF-1111257,
  an NSF Mathematical Sciences Postdoctoral Research Fellowship, Grant No. DMS-0902962,
  a Simons Investigator Award to Daniel Spielman, and a MacArthur Fellowship.
}
\iffull{}{(Extended Abstract)}}

\author{
Adam W. Marcus\\
Crisply\\
Yale University\\
\and
Daniel A. Spielman \\ 
Yale University
\and 
Nikhil Srivastava\\
UC Berkeley
}

\date{\today}
\begin{document}
\maketitle

\begin{abstract}
We prove that there exist bipartite Ramanujan graphs of every degree and every
  number of vertices.
The proof is based on analyzing the expected characteristic polynomial of a
  union of random perfect matchings, and involves three ingredients: 
  (1) a formula for the expected characteristic polynomial of the sum of a regular
  graph with a random permutation of another regular graph,
  (2) a proof that this expected polynomial is real rooted and that
  the family of polynomials considered in this sum is an interlacing family, 
  and
  (3) strong bounds on the roots of the expected characteristic polynomial
  of a union of random perfect matchings,
  established using the framework of finite free convolutions introduced recently in
  \cite{mssfinite}.
\end{abstract}

%  (s) a construction of 
%  interlacing families for characteristic polynomials over random permutation matrices, 
%  (2) a quadrature theorem showing that expected characteristic polynomials over 
%  random permutations are the same as expected polynomials over random
%  orthogonal matrices, (3) strong bounds on the roots of the latter polynomials,
%  established using the framework of finite free convolutions introduced recently in

\newpage

\section{Introduction}
Ramanujan graphs are undirected regular graphs whose nontrivial adjacency matrix
  eigenvalues are asymptotically as small as possible; in other words, they are the optimal spectral
  expander graphs.
In this paper, we prove the existence of bipartite Ramanujan graphs of every
  degree and every size.
We do this by showing that a random $m-$regular bipartite graph, obtained as a
  union of $m$ random perfect matchings across a bipartition of an even number
  of vertices, is Ramanujan with nonzero probability.
Specifically, we prove that the expected characteristic polynomial of
  such a random graph has roots concentrated in the appropriate range, and
  use the method of interlacing families introduced in \cite{IF1} to deduce that 
  there must be an actual graph whose eigenvalues are no worse than the roots of this polynomial.
Infinite families of bipartite Ramanujan graphs were shown in that paper to exist for every
  degree $m\ge 3$ , but it was not known whether they exist for every number of vertices.

The main conceptual and technical contributions of this work and the companion
  paper \cite{mssfinite} are the following.
First, we identify a new class of real-rooted expected characteristic
  polynomials related to random graphs, and develop new tools for establishing
  their interlacing properties and analyzing the locations of their roots.
These methods are different from those used to study the mixed characteristic
  polynomials of \cite{IF2}, and the bounds we obtain are strictly stronger than those
  produced by the original ``barrier method'' argument introduced in \cite{bss} 
  (which is off by a factor of two in this setting).
Notably, the expected characteristic polynomials we consider are computable
  in polynomial time, unlike most other known expected characteristic
  polynomials.
Second, in contrast to previous work, we derive the Ramanujan bound from
  completely generic considerations involving random orthogonal matrices, in 
  particular making no use of results from algebraic graph theory 
  or number theory.

\subsection{Summary of Results}
Recall that the adjacency matrix $A$ of an \textbf{$\mathbf{m}-$regular graph on $\mathbf{d}$ vertices}\footnote{%
{In order 
  to be consistent with our companion paper \cite{mssfinite}, we will,
  unconventionally, use $m$ to denote the degree of a graph and $d$ to denote its
  number of vertices.}}  has largest eigenvalue
  $\lambda_1(A)=m$, and smallest eigenvalue $\lambda_d(A)=-m$ when the graph is
  bipartite.
Following Friedman \cite{friedman}, we will refer to these as the {\em trivial}
  eigenvalues of $A$, and we will call a graph {\em Ramanujan} if all of its
  non-trivial eigenvalues have absolute value at most $2\sqrt{m-1}$.
Such graphs are asymptotically best possible in the sense that a theorem of Alon and Boppana
  \cite{Nilli} tells us that for every $\epsilon>0$, every infinite sequence
  of $m-$regular graphs must contain a graph with a non-trivial eigenvalue of
  absolute value at least $2\sqrt{m-1}-\epsilon$.

Our main theorem is that a union of $m$ random perfect matchings across a
  bipartition of $2d$ vertices is Ramanujan with nonzero probability.
\begin{theorem}\label{thm:bipartite} Let $P_1,\ldots,P_m$ be independent
	uniformly random $d\times d$
	permutation matrices, $m\ge 3$. Then, with nonzero probability the nontrivial eigenvalues of 
$$ A=\sum_{i=1}^m \begin{bmatrix} 0 & P_i \\ P_i^T & 0\end{bmatrix}$$
	are all less than $2\sqrt{m-1}$ in absolute value.
\end{theorem}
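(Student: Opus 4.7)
My plan is to prove Theorem~\ref{thm:bipartite} by combining the method of interlacing families with a careful analysis of the expected characteristic polynomial of $A$, following the three-ingredient roadmap outlined in the introduction. Writing $M_P := \begin{bmatrix} 0 & P \\ P^T & 0 \end{bmatrix}$ for the adjacency matrix of the perfect matching associated with a $d\times d$ permutation matrix $P$, we have $A = \sum_{i=1}^m M_{P_i}$. Because $A$ is bipartite, its spectrum is symmetric about $0$, and its trivial eigenvalues are $\pm m$ (attained on the eigenvectors $(\onevec,\pm\onevec)$ independent of the $P_i$). It therefore suffices to bound the largest eigenvalue of $A$ restricted to the complement of these two trivial eigenvectors; equivalently, I will work with the quotient polynomial $\charp{A}{x}/(x^2-m^2)$ and show that its largest root is at most $2\sqrt{m-1}$.

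The first step is to derive a closed formula for $\E_{P_1,\ldots,P_m}[\charp{A}{x}]$. Peeling off one permutation at a time, the crux is to compute, for a fixed bipartite matrix $B$, the quantity $\E_P[\charp{B+M_P}{x}]$ where $P$ is a uniformly random permutation. I would expect this to reduce to a convolution-type operation acting on $\charp{B}{x}$ and on the characteristic polynomial $(x^2-1)^d$ of a single perfect matching (whose spectrum is $\pm 1$, each with multiplicity $d$). Iterating this identity $m$ times gives an explicit expression for the expected polynomial as an $m$-fold such convolution applied to $(x^2-1)^d$, which is the "first ingredient" advertised in the abstract.

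The second step is to organize the random permutations into an interlacing family. For each prefix $(P_1,\ldots,P_k)$ with $0\le k\le m$, let $q_{P_1,\ldots,P_k}(x):=\E_{P_{k+1},\ldots,P_m}[\charp{A}{x}]$. Using the formula from step~1, I would verify that every such partial average is real-rooted (relying on the fact that the convolution operation derived there preserves real-rootedness in the presence of bipartite $\pm$-symmetry) and that for each fixed prefix the family $\{q_{P_1,\ldots,P_k,P}\}_{P}$ shares a common interlacer. The general framework of \cite{IF1} then guarantees the existence of specific permutations $P_1^\ast,\ldots,P_m^\ast$ for which the largest root of $\charp{A^\ast}{x}/(x^2-m^2)$ is at most the largest root of $\E[\charp{A}{x}]/(x^2-m^2)$.

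The final and, I expect, hardest step is to bound this expected-polynomial maxroot by $2\sqrt{m-1}$. Using the formula from step~1, I would identify $\E[\charp{A}{x}]$ with an $m$-fold finite free convolution of $(x^2-1)^d$ with itself in the sense of \cite{mssfinite}, and then apply the Cauchy-transform based root bounds developed there to control $\maxroot{\E[\charp{A}{x}]/(x^2-m^2)}$. The main technical challenges I anticipate are (a) correctly separating out the trivial roots $\pm m$ so that no constant factor is lost in the bound, and (b) performing the optimization of the Cauchy-transform bound sharply enough to produce exactly $2\sqrt{m-1}$ rather than a weaker constant. The $\pm$-symmetry of $(x^2-1)^d$ inherited from the bipartite structure should play an essential role both in the real-rootedness argument of step~2 and in making the bound in step~3 tight at the Alon--Boppana threshold.
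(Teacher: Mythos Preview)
Your three-step outline mirrors the paper's, but Step~2 has a genuine gap. Knowing that each partial expectation $q_{P_1,\ldots,P_k}$ is real-rooted (which does follow from the convolution identity of Step~1) is not what the interlacing-families method requires: you must show that for each fixed prefix the $d!$ polynomials $\{q_{P_1,\ldots,P_k,P}\}_P$ share a \emph{common} interlacer, equivalently that \emph{every} convex combination of them is real-rooted. Your Step~1 formula is a quadrature identity valid only for the \emph{uniform} average over $P$, so it cannot certify real-rootedness of arbitrary mixtures. The paper closes this gap by an entirely separate mechanism (Section~\ref{sec:inter}): writing $M_P=(P\oplus I)M(P\oplus I)^T$, one realizes the uniform $P$ as a product of independent random transpositions (Lemma~\ref{lem:realizable}) and proves that averaging over a single swap preserves a ``determinant-like'' property (Lemma~\ref{lem:rswap}, Theorem~\ref{thm:swapreal}); the key structural fact is that conjugation by a transposition is a rank-$2$, trace-$0$ perturbation (Lemma~\ref{lem:rank2}), which is precisely what produces a common interlacer at each binary branching. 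Without this swap-level analysis there is no path from ``the uniform average is real-rooted'' to ``all convex combinations are real-rooted.''

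Your Step~3 also misidentifies the convolution. The randomness in $A$ ranges over the block-diagonal subgroup (permutations of one side of the bipartition), so after quadrature the relevant operation is the \emph{asymmetric} convolution $\recsum_{d-1}$ on the $(d-1)$-dimensional quotient, not a $2d$-dimensional symmetric convolution of $(x^2-1)^d$. Concretely (Corollary~\ref{cor:bipquad}), after splitting off both trivial roots one has
\[
\E\charp{A}{x}=(x^2-m^2)\,\subsquare\!\big[(x-1)^{d-1}\recsum_{d-1}\cdots\recsum_{d-1}(x-1)^{d-1}\big],
\]
and the sharp bound $2\sqrt{m-1}$ then comes from Theorem~\ref{thm:recsumTrans} applied to $\subsquare\,(x-1)^{d-1}=(x^2-1)^{d-1}$. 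Your proposed base polynomial $(x^2-1)^{d}$ keeps the trivial eigenvalue inside the convolution; this is exactly the difficulty~(a) you anticipated, and if it is not handled by first passing to the $(d-1)$-dimensional quotient the final Cauchy-transform optimization does not land on $2\sqrt{m-1}$.
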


We also prove the following non-bipartite version of this theorem, regarding a
  union of $m$ random perfect matchings on $d$ vertices (not bipartite), with
  $d$ even.
\begin{theorem}\label{thm:nonbipartite}
Let $P_1,\ldots,P_m$ be independent uniformly random $d\times d$ permutation
  matrices, $d$ even, $m\ge 3$. Let $M$ be the adjacency matrix of any fixed
  perfect matching on $d$ vertices. Then with nonzero probability:
$$ \lambda_2\left(\sum_{i=1}^m P_iMP_i^T\right)<2\sqrt{m-1}.$$
\end{theorem}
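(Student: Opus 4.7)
The plan is to adapt the three-ingredient strategy used for Theorem~\ref{thm:bipartite} to the non-bipartite conjugation model, where each summand $P_i M P_i^T$ acts on a $d$-dimensional space. Each such summand is similar to $M$, hence has characteristic polynomial $(x^2-1)^{d/2}$, and the sum $A = \sum_{i=1}^m P_i M P_i^T$ is the adjacency matrix of an $m$-regular graph for which the all-ones vector is a trivial eigenvector with eigenvalue $m$. The target is therefore to bound $\lambda_2(A)$ from above by $2\sqrt{m-1}$.

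First I would establish an averaging identity for the expected characteristic polynomial. For any fixed symmetric $B$, I would show an identity of the form
\[
\E_P \det\!\bigl(xI - B - PMP^T\bigr) \;=\; \chi_B(x) \boxplus (x^2-1)^{d/2},
\]
where $\boxplus$ denotes the degree-$d$ finite free additive convolution of \cite{mssfinite}, possibly after factoring out the one-dimensional all-ones direction that every $P$ fixes. Iterating over the $m$ independent permutations would then express $\E[\chi_A(x)]$ as a factor $(x-m)$ (for the trivial eigenvalue) times the $m$-fold finite free additive convolution of $(x^2-1)^{d/2}$ with itself, restricted to the orthogonal complement of the all-ones vector.

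Next I would verify that $\E[\chi_A(x)]$ and every conditional expectation over a prefix $(P_1,\ldots,P_k)$ is real-rooted, and that conditioning on the choice of the next permutation produces a family of polynomials satisfying the hypotheses of the interlacing-families framework of \cite{IF1}. Real-rootedness and the interlacing criterion (which, by \cite{IF1}, reduces to real-rootedness of every convex combination of the conditional polynomials) both follow from stability properties of the finite free convolution established in \cite{mssfinite}. Combined, this yields a realization $(P_1^\star,\ldots,P_m^\star)$ whose associated graph has $\lambda_2$ no larger than the second largest root of $\E[\chi_A(x)]$. Finally, I would apply the root bound from \cite{mssfinite}: the largest root of the $m$-fold additive finite free convolution of $(x^2-1)^{d/2}$ with itself, after stripping the trivial factor $(x-m)$, is strictly less than $2\sqrt{m-1}$, matching the Ramanujan threshold.

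The hardest step will be the averaging identity in ingredient (1). The symmetric group is far from Haar on the orthogonal group, and it always preserves the all-ones vector, so $P M P^T$ never mixes across this direction. The challenge is to quotient out this invariant subspace cleanly and show that on its orthogonal complement the random permutation conjugation yields \emph{exactly} the finite free additive convolution — which is naturally defined via orthogonal averaging. This is what the authors allude to when they write that their Ramanujan bound comes ``from completely generic considerations involving random orthogonal matrices.'' It is also what distinguishes the non-bipartite case from Theorem~\ref{thm:bipartite}, where the bipartite block structure automatically separates both trivial eigenvalues $\pm m$ from the rest of the spectrum.
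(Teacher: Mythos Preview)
Your three-ingredient outline matches the paper's strategy, and you are right that the quadrature identity (your ingredient~1) is a substantial step. However, there is a genuine gap in your treatment of the interlacing step.

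You claim that the interlacing criterion---real-rootedness of every convex combination of the conditional polynomials---``follow[s] from stability properties of the finite free convolution established in \cite{mssfinite}.'' It does not. The quadrature identity only converts the \emph{uniform} average over permutations into a finite free convolution. To run the interlacing-families argument you must show that for \emph{every} product distribution on $(P_1,\ldots,P_m)$---equivalently, for every fixed prefix $(\pi_1,\ldots,\pi_{k-1})$ and every distribution $\nu$ on the next coordinate---the resulting expected characteristic polynomial is real-rooted. By bilinearity of $\sqsum$ this reduces to showing that $\E_{P\sim\nu}\charp{B+PMP^T}{x}$ is real-rooted for arbitrary symmetric $B$ and arbitrary (non-uniform) $\nu$ on $S_d$. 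That polynomial is \emph{not} a free convolution of real-rooted inputs, and nothing in \cite{mssfinite} asserts that $q\mapsto q\sqsum r$ sends non-real-rooted polynomials to real-rooted ones. Concretely, two choices $\pi_k,\pi_k'$ give matrices $B+\pi_k M\pi_k^T$ and $B+\pi_k' M(\pi_k')^T$ that differ by a high-rank perturbation, so their characteristic polynomials need not have a common interlacing a priori.

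The paper supplies this missing ingredient with a separate, self-contained argument (Section~\ref{sec:inter}): each uniform permutation is realized as a product of independent random transpositions (Lemma~\ref{lem:realizable}), and a single random swap is shown to preserve a property called \emph{determinant-likeness}---hyperbolicity together with linearity in rank-one directions (Lemma~\ref{lem:rswap}). The key structural fact is that conjugation by a transposition perturbs a symmetric matrix by a rank-two, trace-zero update (Lemma~\ref{lem:rank2}), which is precisely what forces a common interlacing at each binary branching. This mechanism is independent of \cite{mssfinite} and is what makes Theorem~\ref{thm:if} applicable over $\{0,1\}^{mN}$ (indexing swaps) rather than over $S_d^m$. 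You have correctly flagged quadrature as hard, but you have underestimated this second step; it is not a citation but one of the paper's main contributions.
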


Since we only prove nonzero bounds on the probabilities, the nonbipartite
  theorem is a logical consequence of the bipartite one.
We  describe it
  here because its proof is substantially easier and contains most of the main
  ideas.
Note that Theorem \ref{thm:nonbipartite} does not produce Ramanujan graphs
  because it does not guarantee any control of the least eigenvalue $\lambda_d$. 

We remark that as they are unions of independent matchings, the graphs we produce
  may have multiple edges between two vertices.
Thus, they are strictly speaking multigraphs, and do not subsume the previous
  results if one insists on simple graphs.
However, it seems that it should be more difficult to construct Ramanujan graphs with multiedges
  than without.
Like \cite{IF1}, this paper establishes existence but does not give a polynomial
  time construction of Ramanujan graphs.

\subsection{Related Work} 
Infinite families of Ramanujan graphs were 
  first shown to exist for $m=p+1$, $p$ a prime, in the seminal work of Margulis and
  Lubotzky-Phillips-Sarnak \cite{margulis,lps}.
The graphs they produce are Cayley graphs and can be constructed very
  efficiently, and their analysis relies on deep results from number theory, which
  is responsible for the ``Ramanujan'' nomenclature.
Friedman \cite{friedman} showed that a random $m-$regular graph is almost 
  Ramanujan: specifically, that a union of $m$ perfect matchings has non-trivial
  eigenvalues bounded by $2\sqrt{m-1}+\epsilon$ with high probability, for every
  $\epsilon > 0$.
More recently, in \cite{IF1}, we proved the existence of infinite families of $m-$regular bipartite
  Ramanujan graphs for every $m\ge 3$ by proving (part of) a conjecture of Bilu
  and Linial \cite{bilulinial} regarding the existence of good $2-$lifts of
  regular graphs.
Prior to the present paper, it was unknown if there exist Ramanujan graphs of every number of vertices.
We refer the reader to \cite{hlw} and \cite{IF1} for a more detailed discussion
  of Ramanujan graphs and $2-$lifts.

\subsection{Outline of the Paper}
The proofs of both of our theorems follow the same strategy and consist of three
  steps.
In each step we present the simpler non-bipartite case first, and then indicate
  the modifications required for the bipartite case.
  
First, we show that the expected characteristic polynomials of the 
  random graphs we are interested in are real rooted and come from interlacing families (reviewed in
  Section \ref{sec:prelim}), which
  reduces our existence theorems to analyzing the roots of these polynomials.
This is achieved in
  Section \ref{sec:inter} by decomposing the random permutations used to
  generate these expected polynomials into swaps acting on two vertices at a time, and showing
  that such swaps correspond to linear transformations which preserve
  real-rootedness properties of certain multivariate polynomials.
Theorem \ref{thm:swapreal} implies that if $A$ and $B$ are symmetric matrices,
  then the expected characteristic polynomial of $A + P B P^{T}$ is real rooted
  for a random permutation matrix $P$.
We remark that this argument is completely elementary and self-contained, and unlike
  \cite{IF1,IF2} does not appeal to any results from the theory of real stable
  or hyperbolic polynomials.
In the process, we introduce a class of ``determinant-like'' polynomials which
  may be of independent interest.

Next, in Section \ref{sec:quad}
  we derive a closed-form formula for the expected characteristic polynomial
  of a sum of randomly permuted regular graphs.
We begin by proving that the expected characteristic
  polynomials over random permutations can be replaced by expected characteristic
  polynomials over random orthogonormal matrices.
This may be seen as a quadrature or derandomization statement, which says that
  these characteristic polynomials are not able to distinguish between the set of
  permutation matrices and the set of orthogonal matrices; essentially this happens
  because determinants are multilinear, which causes certain restrictions of
  them to have very low degree Fourier coefficients.
This component of the proof may also be of independent interest.

Finally, we appeal to machinery developed in our companion paper
  \cite{mssfinite}, which studies the structure of expected 
  characteristic polynomials over random orthogonal matrices.
 In particular, such polynomials may be expressed crisply in terms of a simple
  (and explicitly computable) convolution operation on characteristic polynomials,
  which we call the finite free additive convolution. 
In this framework, the
  characteristic polynomial of  a union of $m$ random matchings is simply 
  the $m-$wise convolution of the characteristic polynomial of a single matching.
By applying strong bounds on the roots of these convolutions derived in
  \cite{mssfinite}, we obtain the desired Ramanujan bound of $2\sqrt{m-1}$.
The requisite material regarding free convolutions is introduced in Sections
\ref{sec:freeprelim} and \ref{sec:cauchyintro}.
	
{These three ingredients are combined in Section \ref{sec:bounds} to complete the
  proofs of Theorems \ref{thm:bipartite} and \ref{thm:nonbipartite}.}

\section{Preliminaries}
\subsection{Interlacing Families}\label{sec:prelim}
We recall the following theorem from \cite{IF2}, stated here in the slightly
  different language of product distributions.
\begin{theorem}[Interlacing Families]\label{thm:if} Suppose $\{f_\omega(x)\}_{\omega\in \{0,1\}^m}$ is a family of real-rooted polynomials
	of the same degree $n$ with positive leading coefficient, such that
	$$ E_\mu(x):=\E_{\omega\sim\mu} f_\omega(x)$$
	is real-rooted for every product distribution
	$\mu=\mu_1\otimes\ldots\otimes \mu_m$ on $\Omega=\{0,1\}^m$. Then for
	every $k=1,\ldots,n$ and every such $\mu$, there is some
	$\omega_0\in\Omega$ such that
	$$\lambda_k(f_{\omega_0})\le\lambda_k(E_\mu),$$
where $\lambda_k$ denotes the $k$th largest root of a real-rooted polynomial.
\end{theorem}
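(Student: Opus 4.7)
The plan is to induct on $m$, descending coordinate by coordinate through the binary tree whose leaves are the $f_\omega$'s and whose internal nodes carry the conditional expected polynomials. Write, for a partial assignment $(a_1,\dots,a_i) \in \{0,1\}^i$, $E_{\mu \mid a_1,\dots,a_i}(x) = \E_{\omega \sim \mu}[f_\omega(x) \mid \omega_1 = a_1, \dots, \omega_i = a_i]$. Because $\mu$ is a product distribution, each such conditional expectation is itself of the form $E_{\mu'}$ for the product distribution $\mu'$ on the remaining coordinates (with the fixed coordinates replaced by point masses), and therefore is real-rooted by hypothesis. The root of this tree carries $E_\mu$ itself. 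I will show that one can always descend from an internal node to a child whose polynomial has $k$-th largest root no greater than that of the parent; iterating $m$ times produces the desired leaf $\omega_0$.

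The main analytic tool is the following standard fact (the common-interlacer / convex-combination lemma): if $p_0, p_1$ are real-rooted of the same degree with positive leading coefficient, and if $\lambda p_0 + (1-\lambda) p_1$ is real-rooted for every $\lambda \in [0,1]$, then $p_0$ and $p_1$ have a common interlacer $g$, and consequently for any convex combination $q = \mu p_0 + (1-\mu) p_1$ and any $k$,
\[
\min\bigl(\lambda_k(p_0), \lambda_k(p_1)\bigr) \;\le\; \lambda_k(q) \;\le\; \max\bigl(\lambda_k(p_0), \lambda_k(p_1)\bigr).
\]
The bracketing follows from looking at consecutive roots of $g$: between adjacent roots of the common interlacer, $p_0$ and $p_1$ share a sign pattern, so the convex combination has a root in exactly the same interval, and these roots align in the same order as the roots of $p_0, p_1$. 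The upshot is that at least one of $p_0, p_1$ has its $k$-th root bounded above by $\lambda_k(q)$.

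To carry out the inductive step, fix a partial assignment $(a_1, \dots, a_{i})$ and let $p_b := E_{\mu \mid a_1,\dots,a_i, b}$ for $b \in \{0,1\}$. For every $\lambda \in [0,1]$, the polynomial $\lambda p_0 + (1-\lambda) p_1$ coincides with $E_{\mu' \mid a_1, \dots, a_i}$ where $\mu'$ agrees with $\mu$ on coordinates other than $i+1$ and places mass $(\lambda, 1-\lambda)$ on the $(i+1)$-st coordinate; since $\mu'$ is still a product distribution, the hypothesis of the theorem gives real-rootedness. The parent polynomial at $(a_1, \dots, a_i)$ is the convex combination with weights $(\mu_{i+1}(0), \mu_{i+1}(1))$, so the lemma above produces a bit $b^\star$ with $\lambda_k(p_{b^\star}) \le \lambda_k(E_{\mu \mid a_1,\dots,a_i})$. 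Iterating this argument $m$ times yields the leaf $\omega_0$ with $\lambda_k(f_{\omega_0}) \le \lambda_k(E_\mu)$.

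The routine parts are the tree-induction bookkeeping and checking that the hypothesis passes cleanly to conditional expectations over product distributions. The main obstacle, and the place where all the structure is used, is the common-interlacer lemma — specifically the passage from ``every convex combination is real-rooted'' to a genuine common interlacer, which in turn enables the $k$-th root bracketing. This is the only nontrivial analytic input; once it is in hand, the rest is mechanical.
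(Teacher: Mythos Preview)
The paper does not prove this theorem; it is stated in Section~\ref{sec:prelim} as a result recalled from \cite{IF2}. Your argument is correct and is exactly the standard proof from \cite{IF1,IF2}: descend the binary tree of conditional expectations, observing at each node that the two children have all convex combinations real-rooted (because the product-distribution hypothesis lets you vary the marginal on the next coordinate freely), hence share a common interlacer by the equivalence recorded in Lemma~\ref{lem:fisk}, hence bracket the $k$th root of the parent between their own $k$th roots.
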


For real rooted polynomials $f$ and $g$, we write
  $g \longrightarrow f$ if the roots of $f$ and $g$ interlace and the largest
  root of $f$ is at least as big as the largest root of $g$.
We will use the following elementary facts about interlacing and
  real-rootedness, which may be found in \cite{fisk}.
\begin{lemma}\label{lem:fisk} 
	If $g$ has degree one less than $f$ and both are real-rooted, then 
	\begin{enumerate}
		\item	$g\longrightarrow f$ if and only if $f+\alpha g$ is real-rooted for all $\alpha\in\R$
		\item   $g\longrightarrow f$ implies that $f\longrightarrow f-g$.
	\end{enumerate}
%	where $\longrightarrow$ denotes interlacing, pointing to the polynomial
%	with the largest root.

	If $f_{1}$ and $f_{2}$ are monic and real-rooted of the same degree, then they have a common interlacing
	if and only if $f_{1}+\alpha f_{2}$ is real-rooted for all $\alpha\ge 0$.
\end{lemma}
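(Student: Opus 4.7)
I would prove all three parts by elementary sign analysis at the roots of the ingredient polynomials, invoking the Intermediate Value Theorem to count real roots; this sidesteps any appeal to Hermite-Biehler or real-stability machinery. For (1), suppose $f$ is monic of degree $n$ with roots $\lambda_1\ge\ldots\ge\lambda_n$ and $g$ is monic of degree $n-1$ with roots $\mu_1\ge\ldots\ge\mu_{n-1}$ interlacing them ($\lambda_i\ge\mu_i\ge\lambda_{i+1}$). Counting negative factors in $g(\lambda_i)=\prod_j(\lambda_i-\mu_j)$ gives $\sgn{g(\lambda_i)}=(-1)^{i-1}$, so $(f+\alpha g)(\lambda_i)=\alpha g(\lambda_i)$ alternates sign in $i$ for any fixed $\alpha\neq 0$, producing $n-1$ roots in the gaps between consecutive $\lambda_i$, and a single remaining root in $(\lambda_1,\infty)$ or $(-\infty,\lambda_n)$ depending on $\sgn{\alpha}$ (matched against the $x\to\pm\infty$ limits of the leading term). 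For the converse direction of (1), factor $f+\alpha g=\alpha(g+\alpha^{-1}f)$ and let $|\alpha|\to\infty$: the $n-1$ finite roots of the pencil converge to the roots of $g$, forcing $g$ to be real-rooted; a continuity argument in $\alpha$ then produces the interlacing, because any failure of interlacing would have to push a pair of real roots of $f+\alpha g$ into the complex plane for some intermediate $\alpha$.

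For (2), the same sign computation gives $(f-g)(\lambda_i)=-g(\lambda_i)$ of sign $(-1)^i$, yielding $n-1$ real roots of $f-g$ in the gaps $(\lambda_{i+1},\lambda_i)$. Since $f-g$ is monic of degree $n$ with $(f-g)(\lambda_1)<0$ and tends to $+\infty$ as $x\to+\infty$, there is one extra root $\nu_1>\lambda_1$. The labeling $\nu_1>\lambda_1>\nu_2>\lambda_2>\ldots>\nu_n$ is exactly the statement $f\longrightarrow f-g$.

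For (3), let $h$ be a common interlacer of monic degree-$n$ polynomials $f_1,f_2$ with roots $\mu_1\ge\ldots\ge\mu_{n-1}$. Both $f_1(\mu_i)$ and $f_2(\mu_i)$ have common sign $(-1)^i$, so $(f_1+\alpha f_2)(\mu_i)$ has sign $(-1)^i$ for every $\alpha\ge 0$; together with the leading coefficient $1+\alpha>0$ controlling the behavior on $\R\setminus[\mu_{n-1},\mu_1]$, this supplies $n$ real roots. The converse is the main obstacle. My plan is to track the $n$ continuous root-branches $\lambda_i(\alpha)$ of $f_1+\alpha f_2$ as $\alpha$ ranges over $[0,\infty]$: real-rootedness keeps each branch on $\R$, and defining $M_i:=\sup_\alpha \lambda_i(\alpha)$ and $m_{i-1}:=\inf_\alpha \lambda_{i-1}(\alpha)$ yields $M_i\le m_{i-1}$, so that any $\mu_i\in[M_i,m_{i-1}]$ produces a common interlacer. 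The delicate point is handling the collision values of $\alpha$ where two branches meet; these correspond to double roots of $f_1+\alpha f_2$, so the branches merely touch rather than cross, preserving the disjointness of the intervals $[\lambda_{i+1}(\alpha),\lambda_i(\alpha)]$ and thus the needed ordering in the limit.
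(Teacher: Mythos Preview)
The paper does not actually prove this lemma; it simply cites Fisk's book and states the facts as known. So there is no ``paper's own proof'' to compare against, and your elementary sign-counting approach is a perfectly reasonable way to establish these standard facts.

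Parts (1) forward, (2), and (3) forward are fine modulo the usual caveat that equalities in the interlacing inequalities (shared roots) require a small perturbation argument to make the sign computations literal. The converse of (1) is sketched a bit loosely, but the idea can be made precise in the standard way.

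The genuine gap is in the converse of (3). You assert that $M_i:=\sup_\alpha \lambda_i(\alpha)\le m_{i-1}:=\inf_\alpha \lambda_{i-1}(\alpha)$, but this does not follow from the observation that ``branches merely touch rather than cross.'' The ordering $\lambda_{i}(\alpha)\le \lambda_{i-1}(\alpha)$ holds for each fixed $\alpha$ by definition of ordered roots, so ``touch but don't cross'' is vacuous and says nothing about comparing $\lambda_i(\alpha_1)$ with $\lambda_{i-1}(\alpha_2)$ at \emph{different} parameter values. A priori one could have branches like $\lambda_{i-1}(\alpha)=1+\alpha$ and $\lambda_i(\alpha)=\alpha$, which never cross yet have $\sup\lambda_i=\infty>\inf\lambda_{i-1}$.

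What actually forces $M_i\le m_{i-1}$ is an extra counting argument you have not supplied. If $\lambda_i(\alpha_1)>c>\lambda_{i-1}(\alpha_2)$ for some $c$ (chosen not to be a common root of $f_1,f_2$), then the number of roots of $f_1+\alpha f_2$ in $(c,\infty)$ drops by at least two as $\alpha$ moves from $\alpha_1$ to $\alpha_2$. But $(f_1+\alpha f_2)(c)$ is a nonconstant \emph{linear} function of $\alpha$, so it vanishes at exactly one $\alpha^*$; moreover $c$ must be a \emph{simple} root of $f_1+\alpha^* f_2$, since a higher multiplicity at $c$ would force complex roots for $\alpha$ on one side of $\alpha^*$ (contradicting the hypothesis). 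Hence the count of roots above $c$ changes by exactly one, not two, giving the contradiction. Once you insert this step, your construction of the common interlacer goes through.
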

\subsection{Finite Free Convolutions of Polynomials}\label{sec:freeprelim}
To analyze the expected characteristic polynomials of the random graphs we
  consider, we will need the notion of a {\em finite free convolution} of two
  polynomials, developed in our companion paper \cite{mssfinite}.
We denote the characteristic polynomial of a matrix by:
  $$\charp{A}{x}:=\det(xI-A).$$

\begin{definition}[Symmetric Additive Convolution] 
Let $p(x)=\charp{A}{x}$ and $q(x)=\charp{B}{x}$ be two real-rooted polynomials, for some symmetric
   $d\times d$ matrices $A$ and $B$. The {\em symmetric additive convolution} of $p$ and $q$ is defined as:
\[
  p (x) \sqsum_{d} q (x) = \expec{Q} \charp{A + Q B Q^{T}}{x},
\]
where the expectation is taken over random orthogonal matrices $Q$ sampled
according to the Haar measure on $O(d)$, the group of $d$-dimensional orthonormal matrices.
\end{definition}
Note that this is a well-defined operation on polynomials because the distribution of the
  eigenvalues of $A+QBQ^T$ depends only on the eigenvalues of $A$ and the eigenvalues of $B$, 
  which are the roots of $p$ and $q$.

\begin{definition}[Asymmetric Additive Convolution] 
Let $p(x)=\charp{AA^T}{x}$ and $q(x)=\charp{BB^T}{x}$ be two real-rooted
  polynomials with nonnegative roots, for some arbitrary (not necessarily
  symmetric) $d\times d$ matrices $A$ and $B$. The {\em asymmetric additive
  convolution} of $p$ and $q$ is defined as
\[
	p (x) \recsum_{d} q (x) = \expec{Q,R} \charp{(A + Q B R^{T})(A+QBR^T)^T}{x},
\]
where $Q$ and $R$ are independent random orthogonal matrices sampled uniformly from 
  $O(d)$.
\end{definition}
When dealing with a possibly asymmetric $d\times d$ matrix $M$, we will frequently consider the
  {\em dilation}
$$\dil{M},$$
which is by construction a symmetric $2d\times 2d$ matrix.
We will refer to a matrix of this type as a {\em bipartite} matrix.
It is easy to see that its eigenvalues are symmetric about $0$ and are equal to
  $\pm\lambda_1(MM^T)^{1/2},\ldots,\pm\lambda_d(MM^T)^{1/2}$, i.e., in absolute value to the 
  singular values of $M$.
This correspondence also gives the useful identity
\begin{equation}\label{eqn:dil} \subsquare\charp{MM^T}{x} =
  \charp{\dil{M}}{x},\end{equation}
where the operator $\subsquare$ is defined by $$(\subsquare p)(x):=p(x^2).$$
With this notation in hand, we can alternately express the asymmetric additive convolution as
\begin{equation}\label{eqn:asymdil}
	\subsquare (p (x) \recsum_{d} q (x)) = \expec{Q,R}
	\charp{\dil{A}+\block{Q}{R}\dil{B}\block{Q}{R}^T}{{x}}.
\end{equation}
%In Section \ref{sec:quad} we will use this form us to express the expected characteristic polynomial of
%  a random regular bipartite graph as an asymmetric convolution of its
%  constituent matchings.

Explicit, polynomial time computable formulas for the additive convolutions in terms of the coefficients of $p$ and
  $q$ may be found in Theorems 1.1 and 1.3 of \cite{mssfinite}.
For this work, we only require the following important consequences of these
  formulas, also established in \cite{mssfinite}.
We will occasionally drop the subscripts in $\sqsum_d$ and $\recsum_d$ when it is clear
  from the context.
\begin{lemma}[Properties of $\sqsum$ and $\recsum$]

	\begin{enumerate}
		\item If $p(x)$ and $q(x)$ are real-rooted then $p(x)\sqsum_d q(x)$ is
			also real-rooted.
		\item If $p(x)$ and $q(x)$ are real-rooted with all roots
			nonnegative, then $p(x)\recsum_d q(x)$ is also real-rooted
			with all roots nonnegative.

		\item The operations $\sqsum_d$ and $\recsum_d$ are  bilinear
			\iffull{(in the coefficients of the polynomials on which
			they operate) and associative.}{and associative.}
\end{enumerate}
\end{lemma}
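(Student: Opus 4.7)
The plan is to derive all three properties from explicit closed-form expressions for $p\sqsum_d q$ and $p\recsum_d q$ in terms of the coefficients of $p$ and $q$, as asserted in Theorems 1.1 and 1.3 of \cite{mssfinite}. The starting observation in every case is that, since $\charp{A + QBQ^T}{x}$ has coefficients that are polynomial in the entries of $Q$ and the Haar measure on $O(d)$ is conjugation-invariant, the expectation $\E_Q \charp{A + QBQ^T}{x}$ depends only on the spectra of $A$ and $B$, hence only on the coefficients of $p$ and $q$.

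For bilinearity I would expand $\det(xI - A - QBQ^T)$ using multilinearity of the determinant, grouping principal minors by how many rows are contributed by $QBQ^T$. The Haar average of a degree-$k$ principal minor of $QBQ^T$ is a known scalar multiple of the $k$-th elementary symmetric function of the eigenvalues of $B$, which is (up to sign) the $k$-th coefficient of $q$. This produces a formula for the coefficients of $p\sqsum_d q$ that is bilinear in the coefficients of $p$ and $q$, from which part~3 for $\sqsum$ is immediate. The same plan works for $\recsum$, using the identity \eqref{eqn:asymdil} to reduce to a symmetric computation on the dilations.

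For associativity I would argue probabilistically. Given symmetric $A, B, C$ with characteristic polynomials $p, q, r$ and independent Haar orthogonal matrices $Q_1, Q_2$, Fubini together with the invariance of Haar measure gives
$$(p \sqsum_d q) \sqsum_d r = \E_{Q_1, Q_2}\charp{A + Q_1 B Q_1^T + Q_2 C Q_2^T}{x},$$
and this expression is symmetric in the roles of the second and third operands, so it also equals $p \sqsum_d (q \sqsum_d r)$. The analogous computation for $\recsum$ uses two independent pairs $(Q_i, R_i)$ along with the dilation correspondence from \eqref{eqn:asymdil}.

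The main obstacle is real-rootedness (parts 1 and 2); proving it from the probabilistic definition directly is hopeless, since an arbitrary average of real-rooted polynomials need not be real-rooted. Instead, I would try to rewrite the bilinear formula obtained above as a differential operator acting on the two polynomials, something of the form $p \sqsum_d q = \sum_{k=0}^{d} \alpha_k \, p^{(k)}(x)\, q^{(d-k)}(x)$ with carefully chosen positive coefficients $\alpha_k$; then classical Hermite--Poulain and Walsh-type theorems on polynomial compositions preserving real-rootedness should do the job. For part~2, the identity \eqref{eqn:asymdil} reduces real-rootedness of $p \recsum_d q$ to real-rootedness of a symmetric convolution applied to the dilated matrices, which is handled by part~1; nonnegativity of the roots of $p\recsum_d q$ then follows from the $x \mapsto -x$ symmetry of the roots of the bipartite dilation. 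The crux is identifying and verifying the exact differential-operator form of the convolution, which I would attack first in the case $d=2$ by hand and then extrapolate.
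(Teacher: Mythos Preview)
Your plan for bilinearity and associativity matches the paper's: it cites Theorems~1.1 and~1.3 of \cite{mssfinite} for the explicit bilinear formula, and then proves associativity exactly by your probabilistic symmetrization. One small correction: the step
\[
(p\sqsum q)\sqsum r \;=\; \E_{Q_1,Q_2}\charp{A+Q_1BQ_1^T+Q_2CQ_2^T}{x}
\]
is not ``Fubini plus Haar invariance'' alone. The polynomial $p\sqsum q$ is an \emph{average} of characteristic polynomials, not a priori the characteristic polynomial of any one matrix, so to pull the expectation through $\sqsum$ you must invoke the bilinearity you established in the previous paragraph (this is exactly what the paper does). Likewise, your sketch for part~1 via a differential-operator representation and Hermite--Poulain--type closure is the approach of \cite{mssfinite}; the present paper simply cites Theorem~1.2 there.

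There is, however, a genuine gap in your proposed reduction for part~2. From \eqref{eqn:asymdil},
\[
\subsquare\!\bigl(p\recsum_d q\bigr)\;=\;\E_{Q,R\in O(d)}\,\charp{\dil{A}+\bigl(Q\oplus R\bigr)\dil{B}\bigl(Q\oplus R\bigr)^{T}}{x},
\]
but the average on the right is over the proper subgroup $O(d)\times O(d)\subset O(2d)$, not over Haar measure on $O(2d)$. Consequently this expression is \emph{not} $(\subsquare p)\sqsum_{2d}(\subsquare q)$, and part~1 does not apply to it. In \cite{mssfinite} the two convolutions have genuinely different coefficient formulas (Theorems~1.1 versus~1.3) and separate real-rootedness proofs (Theorems~1.2 versus~1.4); the asymmetric case is not a corollary of the symmetric one via dilation. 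Your nonnegativity argument from the $x\mapsto -x$ symmetry of $\subsquare(p\recsum q)$ is fine once real-rootedness is in hand, but you will need an independent argument for that real-rootedness---either the differential-operator route applied to the distinct formula of Theorem~1.3, or a direct appeal to Theorem~1.4.
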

\iffull{\begin{proof} 
	(1) and (2) are Theorems 1.2 and 1.4 of \cite{mssfinite}, and 
	bilinearity follows immediately from Theorems 1.1
	and 1.3 of \cite{mssfinite}. To see associativity, let
	$p(x)=\charp{A}{x}, q(x)=\charp{B}{x}$ and $r(x)=\charp{C}{x}$, and observe
	that
	\begin{align*}
		(p(x)\sqsum q(x))\sqsum r(x) 
		&= \left(\expec{Q}\expec{R} \charp{QAQ^T+RBR^T}{x}\right) \sqsum \charp{C}{x}
		\\&= \expec{Q}\expec{R} \left(\charp{QAQ+RBR^T}{x}\sqsum \charp{C}{x}\right)\quad\textrm{by bilinearity}
		\\&= \expec{Q} \expec{R} \expec{W} \charp{QAQ+RBR^T+WCW^T}{x},
	\end{align*}
	for random orthogonal matrices $Q,R,W$. The same argument shows that
	this is also equal to $p(x)\sqsum (q(x)\sqsum r(x))$.

	An analogous argument using the formula \eqref{eqn:asymdil} shows that
	$\recsum$ is also associative.
\end{proof}
}{}
A consequence of the above lemma is that for $m$ matrices $A_1,\ldots,A_m$,
  identities such as
  \begin{equation}\label{eqn:msumid} \expec{Q_1,\ldots,Q_m}\charp{\sum_{i=1}^m Q_iA_iQ_i^T}{x} =
  \charp{A_1}{x}\sqsum\charp{A_2}{x}\sqsum\ldots\sqsum\charp{A_m}{x}\end{equation}
make sense.
\subsection{Cauchy Transforms}\label{sec:cauchyintro}
The device that we use to analyze the roots of finite free convolutions of
  polynomials is the Cauchy Transform. 
This is the same (up to normalization) as the Stieltjes Transform and
 the ``Barrier Function'' of \cite{bss,IF1,IF2}.
  \begin{definition}[Cauchy Transform] The {\em Cauchy Transform} of a
	  polynomial $p(x)$ with roots $\lambda_1,\ldots,\lambda_d$ is defined
	  to be the function
\[
	\cauchy{p}{x} = \frac{1}{d} \sum_{i=1}^{d} \frac{1}{x-\lambda_{i}} =
	\frac{1}{d}\frac{p'(x)}{p(x)}.
\]
	We define the {\em inverse Cauchy Transform} of $p$ to be
\[
  \invcauchy{p}{w} = \max \setof{x : \cauchy{p}{x} = w}.
\]
\end{definition}
Note that the Cauchy transform has poles at the roots of $p$, and
when all the roots $\lambda_i$ of $p$ are real, $\cauchy{p}{x}$ is monotone
  decreasing for $x$ greater than the largest root.
Thus, $\invcauchy{p}{w}$ is the unique value of $x$ that is larger than all 
  the $\lambda_{i}$ for which $\cauchy{p}{x} = w$.
In particular, it is an upper bound on the largest root of $p$, and approaches the
  largest root as $w\rightarrow\infty$.

Our bounds on the expected characteristic polynomials of random graphs are a consequence of the following two
  theorems, which are proved in \cite{mssfinite}. 
  \begin{theorem}[Theorem 1.7 of \cite{mssfinite}]\label{thm:sqsumTrans} For real-rooted degree $d$ polynomials $p$
	and $q$ and $w > 0$,
\[
  \invcauchy{p \sqsum_{d} q}{w} \leq  \invcauchy{p}{w} + \invcauchy{q}{w} - 1/w.
\]
\end{theorem}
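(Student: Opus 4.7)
The plan is to leverage the standard monotonicity of the Cauchy transform. Since $\cauchy{r}{x}$ is strictly decreasing in $x$ on $(\lambda_{\max}(r), \infty)$ and tends to $0$ as $x\to\infty$, the inverse Cauchy transform $\invcauchy{r}{w}$ is the unique $x>\lambda_{\max}(r)$ with $\cauchy{r}{x}=w$. Writing $x_p=\invcauchy{p}{w}$, $x_q=\invcauchy{q}{w}$, and $x^\star=x_p+x_q-1/w$, the theorem is equivalent to the conjunction of two statements: (a) $x^\star>\lambda_{\max}(p\sqsum_d q)$; and (b) $\cauchy{p\sqsum_d q}{x^\star}\le w$. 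Part (a) is the easier component: by real-rootedness of $p \sqsum_d q$ (Lemma 2.3(1)) and an elementary envelope such as $\lambda_{\max}(p\sqsum_d q)\le \lambda_{\max}(p)+\lambda_{\max}(q)$ (Weyl's inequality, valid pointwise and therefore for the expected polynomial), it will be enough to check $x^\star$ exceeds this crude upper bound, which follows from $x_p>\lambda_{\max}(p)$, $x_q>\lambda_{\max}(q)$, and $1/w<x_p-\lambda_{\max}(p)$. The substance is (b).

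As a sanity check, consider the degenerate case $q(x)=(x-b)^d$. Then $QBQ^T=bI$ for every orthogonal $Q$, so $p\sqsum_d q=p(x-b)$, giving $\invcauchy{p\sqsum_d q}{w}=x_p+b$. Since $\invcauchy{(x-b)^d}{w}=b+1/w$, we get $x_q=b+1/w$ and $x^\star=x_p+b$, so the inequality is an \emph{equality}. This confirms that the $-1/w$ is the correct ``shift correction'' and suggests the inequality quantifies a concavity property of the finite $R$-transform $w\mapsto \invcauchy{\cdot}{w}-1/w$, which would become additive only in the infinite-dimensional free limit.

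For the main step I would work from the explicit closed-form coefficient formula for $p\sqsum_d q$ (Theorem 1.1 of \cite{mssfinite}). Writing $p(x)=\sum_k(-1)^k\binom{d}{k}a_k x^{d-k}$ and $q$ analogously, the coefficients of $p\sqsum_d q$ are a specific bilinear combination of the $a_i$'s and $b_j$'s with binomial weights. Taking the logarithmic derivative and evaluating at $x^\star$, the goal is to show $(p\sqsum_d q)'(x^\star)\le dw\cdot (p\sqsum_d q)(x^\star)$. The natural route is to exhibit a joint identity relating $\cauchy{p\sqsum_d q}{x_p+x_q-1/w}$ to $\cauchy{p}{x_p}$ and $\cauchy{q}{x_q}$ directly through the coefficient recursion, making crucial use of the two hypotheses $\cauchy{p}{x_p}=\cauchy{q}{x_q}=w$ to cancel otherwise unmanageable cross terms.

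The hardest step is making the coefficient manipulation clean. The cleanest route I would try is induction on $d$: the base case $d=1$ is trivial, and the inductive step requires a product-rule-like identity expressing $(p\sqsum_d q)'$ in terms of the $(d-1)$-dimensional convolution $p'\sqsum_{d-1} q'$ (or a closely related object), which should be extractable from the closed form. If that fails, an alternative is a single-step reduction: build $\sqsum_d$ up by inserting one root of $q$ at a time via a rank-one-like operation and track how $\cauchy{p\sqsum r}{x}$ shifts in $x$ under each insertion, mirroring the barrier-function calculus of \cite{bss,IF2}. In either approach, the specific $-1/w$ term emerges from a telescoping where, at each elementary step, the ``potential'' at $x$ is shown to decrease under a shift of $\invcauchy{\cdot}{w}-1/w$, precisely matching the claimed inequality.
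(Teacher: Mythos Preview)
First, a framing remark: this paper does not actually prove Theorem~\ref{thm:sqsumTrans}; it is imported wholesale from the companion paper \cite{mssfinite}, so there is no ``paper's own proof'' here to compare against. Your proposal therefore has to be judged on its own merits.

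There is a concrete error in your treatment of part~(a). You claim that $1/w < x_p - \lambda_{\max}(p)$, but the inequality goes the \emph{other way}. From $\cauchy{p}{x_p}=w$ we have
\[
  w \;=\; \frac{1}{d}\sum_{i=1}^d \frac{1}{x_p-\lambda_i(p)} \;\le\; \frac{1}{d}\cdot\frac{d}{x_p-\lambda_{\max}(p)} \;=\; \frac{1}{x_p-\lambda_{\max}(p)},
\]
so in fact $x_p-\lambda_{\max}(p)\le 1/w$, with equality only when all roots of $p$ coincide. Thus your argument that $x^\star=x_p+x_q-1/w$ exceeds the Weyl bound $\lambda_{\max}(p)+\lambda_{\max}(q)$ does not go through as written. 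The weaker inequality $(x_p-\lambda_{\max}(p))+(x_q-\lambda_{\max}(q))>1/w$ that you actually need is not obvious either, and in borderline cases it degenerates to an equality; establishing it seems to require essentially the same strength as the theorem itself, so (a) cannot be cleanly decoupled from (b) in the way you propose.

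For part~(b), what you have written is a plan, not a proof. You correctly identify the two natural avenues --- induction on $d$ via the explicit coefficient formula, or a one-root-at-a-time barrier argument --- and the latter is indeed how \cite{mssfinite} proceeds: one realizes $p\sqsum_d q$ by applying a sequence of degree-preserving differential operators (one per root of $q$) to $p$, and shows that each such operator, applied to a real-rooted polynomial, shifts the point where the Cauchy transform equals $w$ by at most the corresponding increment of $\invcauchy{q}{w}-1/w$. But the substance lies entirely in that single-step lemma, which requires a careful convexity/monotonicity computation that you have not supplied. The sanity check with $q=(x-b)^d$ is correct and reassuring, but it is the trivial (commutative) case and gives no leverage on the general one.
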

The above theorem is a strengthening of the univariate barrier function argument
  for characteristic polynomials introduced in \cite{bss}.
This may be seen by taking $q(x)=\charp{B}{x}=x^{d-1}(x-d)$, which corresponds to a
  rank one matrix $B=vv^T$ with trace equal to $d$. It is easy to check that in
  this case $p(x)\sqsum q(x)=p(x)-p'(x)$.

\iffull{We remark that Theorem \ref{thm:sqsumTrans} is inspired by an {\em equality} regarding inverse Cauchy transforms of
  limiting spectral distributions of certain random matrix models arising in
  Free Probability theory; we refer the interested reader to \cite{mssfinite}
  for a more detailed discussion.
}{}
To analyze the case of bipartite random graphs, we will need the corresponding
  inequality for the asymmetric convolution.
  \begin{theorem}[Theorem 1.8 of \cite{mssfinite}]\label{thm:recsumTrans}
For degree $d$ polynomials $p$ and $q$ having only nonnegative real roots,
\[
	\invcauchy{\subsquare( p \recsum_{d} q)}{w} \leq
  \invcauchy{\subsquare p}{w} + \invcauchy{\subsquare q}{w}-1/w.
\]
\end{theorem}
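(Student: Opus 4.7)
The plan is to reduce Theorem \ref{thm:recsumTrans} to Theorem \ref{thm:sqsumTrans} applied at dimension $2d$ by first establishing the identity
\[
\subsquare\bigl(p \recsum_d q\bigr) = (\subsquare p) \sqsum_{2d} (\subsquare q), \qquad (\star)
\]
which is a finite-dimensional analog of the classical fact in free probability that the symmetrization of the rectangular free convolution equals the free additive convolution of the symmetrizations. Once $(\star)$ is in hand, Theorem \ref{thm:recsumTrans} follows immediately by applying Theorem \ref{thm:sqsumTrans} to the degree-$2d$ polynomials $\subsquare p$ and $\subsquare q$:
\[
\invcauchy{(\subsquare p) \sqsum_{2d} (\subsquare q)}{w} \le \invcauchy{\subsquare p}{w} + \invcauchy{\subsquare q}{w} - 1/w.
\]

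To set up $(\star)$, I write $p = \charp{AA^T}{x}$ and $q = \charp{BB^T}{x}$ for some $d \times d$ matrices $A, B$; by \eqref{eqn:dil} we have $\subsquare p = \charp{\dil{A}}{x}$ and $\subsquare q = \charp{\dil{B}}{x}$. The right-hand side of $(\star)$ is then $\E_{U \in O(2d)} \charp{\dil{A} + U\dil{B}U^T}{x}$ by definition of $\sqsum_{2d}$, while \eqref{eqn:asymdil} expresses the left-hand side as $\E_{Q,R \in O(d)} \charp{\dil{A} + \block{Q}{R}\dil{B}\block{Q}{R}^T}{x}$. Thus the content of $(\star)$ is that averaging over the full orthogonal group $O(2d)$ produces the same expected characteristic polynomial as averaging over the block-diagonal subgroup $O(d) \times O(d) \hookrightarrow O(2d)$, despite the two orbits of $\dil{B}$ being genuinely different (the $O(2d)$-orbit consists of all symmetric matrices with eigenvalues $\pm$ the singular values of $B$, while the $O(d)^2$-orbit is restricted to bipartite ones).

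To verify $(\star)$, I would exploit the anti-symmetry $J\dil{A}J = -\dil{A}$ and $J\dil{B}J = -\dil{B}$ where $J = \mathrm{diag}(I_d, -I_d)$, combined with Haar invariance on $O(2d)$. This forces the $O(2d)$-average to be an even polynomial in $x$, i.e.\ of the form $\subsquare r$ for some degree-$d$ polynomial $r$; a bilinearity and spanning argument (both sides of $(\star)$ are bilinear in $p$ and $q$ and can be checked to agree on rank-one inputs) then identifies $r$ with $p \recsum_d q$. Alternatively, one can directly compare the explicit coefficient formulas for $\sqsum_{2d}$ and $\recsum_d$ from Theorems 1.1 and 1.3 of \cite{mssfinite} and verify that applying $\subsquare$ to the latter yields the former, which is a routine algebraic check. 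The main obstacle is exactly $(\star)$: the passage from the $O(d)\times O(d)$ average to the $O(2d)$ average is not a consequence of group invariance alone, and genuinely uses the bipartite structure of the dilations.
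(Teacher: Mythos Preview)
Your reduction rests entirely on the identity $(\star)$, and $(\star)$ is false.  Take $d=2$ and $p(x)=q(x)=(x-1)^2$, so $A=B=I_2$.  A direct computation of $p\recsum_2 q=\E_{Q,R\in O(2)}\charp{(I+QR^T)(I+RQ^T)}{x}$ gives $(x-1)(x-3)$, hence $\subsquare(p\recsum_2 q)=(x^2-1)(x^2-3)=x^4-4x^2+3$.  On the other side, $\subsquare p=\subsquare q=(x^2-1)^2=x^4-2x^2+1$, and the coefficient formula for $\sqsum_4$ (Theorem~1.1 of \cite{mssfinite}) yields
\[
(\subsquare p)\sqsum_4(\subsquare q)=x^4-4x^2+\tfrac{8}{3},
\]
the constant term coming from $2\cdot\frac{4!\,0!}{4!\,0!}\cdot 1\cdot 1+\frac{2!\,2!}{4!\,0!}\cdot(-2)(-2)=2+\tfrac{2}{3}$.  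Since $3\neq 8/3$, the two polynomials differ and $(\star)$ fails.  (The $d=1$ case you might have checked happens to agree, but that is an accident of low dimension.)

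The underlying issue is exactly what you flagged as the ``main obstacle'': the $O(d)\times O(d)$ average and the $O(2d)$ average over conjugates of a bipartite matrix are genuinely different expected characteristic polynomials, and no amount of anti-symmetry under $J=\mathrm{diag}(I_d,-I_d)$ or bilinearity bridges that gap.  The paper does not prove Theorem~\ref{thm:recsumTrans} at all---it is imported from \cite{mssfinite}, where it is established by a separate barrier-function argument tailored to the asymmetric convolution, not by reduction to Theorem~\ref{thm:sqsumTrans}.
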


\section{Interlacing for Permutations}\label{sec:inter}
In this section, we show that the expected characteristic polynomials
  obtained by averaging over certain random permutation matrices form interlacing
  families. 
The class of random permutations which have this property are those that are products of independent
  random swaps, which we now formally define.
\begin{definition}[Random Swap] A {\em random swap} is a matrix-valued random
	variable which is equal to a transposition of two (fixed) indices $s,t$
	with probability $\alpha$ and equal to the identity with probability
	$(1-\alpha)$, for some $\alpha\in [0,1]$.
\end{definition}
\begin{definition}[Realizability by Swaps] A matrix-valued random variable $P$
supported on permutation matrices is {\em realizable by swaps} if there are
random swaps $S_1,\ldots,S_N$ such that the distribution of $P$ is the same as
the distribution of the product $S_NS_{N-1}\ldots S_2S_1$. 
\end{definition}
For example, we show in Lemma \ref{lem:realizable} 
  below that a uniformly random permutation matrix is realizable by swaps.

The main result of this section is that expected characteristic polynomials over
  products of random swaps are always real-rooted. 
These polynomials play a role
  analogous to that of mixed characteristic polynomials in \cite{IF1,IF2}.
\begin{theorem} \label{thm:swapreal} Let $A_1,\ldots,A_m$ be symmetric $d\times d$ matrices and let
$\{S_{ij}\}_{i\le m, j\le N}$ be independent (not necessarily identical) random swaps. Then the expected
characteristic polynomial
\begin{equation}\label{eqn:swappol} \E \det\left(tI-\sum_{i=1}^m 
  \left(\prod_{j=N}^1 S_{ij} \right) A_i \left(\prod_{j=1}^N S_{ij}^T \right)\right)\end{equation}
is real-rooted.
\end{theorem}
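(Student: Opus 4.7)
The plan is to induct on $K := mN$, the total number of random swaps. The base case $K=0$ gives $\det(tI - \sum_i A_i)$, the characteristic polynomial of a symmetric matrix, which is real-rooted. For the inductive step, I would peel off the outermost swap from one of the chains. After conditioning on all other random choices, averaging over the remaining swap produces a convex combination
\[
(1 - \alpha)\, \det(tI - C - B) + \alpha\, \det(tI - C - T B T)
\]
for appropriate symmetric $C, B$ (themselves depending on the remaining random choices) and transposition $T = I - vv^T$ with $v = e_s - e_t$. Each endpoint polynomial is real-rooted, but real-rootedness of the combination is not automatic: by the third item of Lemma \ref{lem:fisk}, it is equivalent to the two endpoint polynomials having a common interlacer.

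The heart of the argument is therefore establishing this common-interlacer property at the correct level of generality. Following the authors' hint, I would introduce a class $\mathcal{D}$ of ``determinant-like'' multivariate polynomials in variables $t, z_1, \ldots, z_K$, one $z_k$ per swap, satisfying: (i) the expected polynomial, viewed as a function of all swap probabilities, lies in $\mathcal{D}$; (ii) $\mathcal{D}$ is closed under the linear operator $\Phi_{T, z_k}$ implementing swap-averaging on the relevant summand; and (iii) every polynomial in $\mathcal{D}$ is real-rooted in $t$ for all $(z_1, \ldots, z_K) \in [0, 1]^K$. I would establish (iii) by a secondary induction, showing that for every $p \in \mathcal{D}$ and every $z_k$ the slices $p|_{z_k = 0}$ and $p|_{z_k = 1}$ admit a common interlacer that itself lies in $\mathcal{D}$ at one lower level.

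The common-interlacer construction exploits the particular structure of the perturbation $R := T B T - B$: it is symmetric, of rank at most two, of trace zero, vanishes in the $(s,t)$ off-diagonal entry, and has support confined to the $s, t$-cross. Using Sylvester's determinant identity for rank-$2$ updates together with Jacobi's identity for adjugates, one can expand $\det(tI - C - TBT) - \det(tI - C - B)$ as a polynomial combination of lower-degree adjugate expressions, from which an explicit common interlacer of degree at most $d - 1$ can be extracted.

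I expect the main obstacle to be identifying $\mathcal{D}$ precisely so that (i)--(iii) all hold simultaneously. The closure and real-rootedness conditions are in tension, and threading this needle requires genuine reliance on the rank-$2$ form of reflection conjugation---generic rank-$2$ symmetric perturbations with nonzero trace admit counterexamples to the common-interlacer property (e.g., $M_0 = 0$, $M_1 = \mathrm{diag}(1,1,0)$ give the convex combination $t(t^2 - 2zt + z)$, which is not real-rooted for $z \in (0,1)$). Because the argument is driven throughout by elementary determinant identities and iterated common interlacers, it genuinely avoids the real-stable/hyperbolic polynomial machinery of \cite{IF1, IF2}.
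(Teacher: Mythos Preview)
Your outline correctly identifies the overall architecture---induction over swaps, the need for a common interlacer between $\det(tI-C-B)$ and $\det(tI-C-TBT)$, and the crucial role of the rank-$2$, trace-$0$ structure of $TBT-B$. But the proposal has a genuine gap at its center: you never define the class $\mathcal{D}$, and you yourself flag ``identifying $\mathcal{D}$ precisely'' as the main obstacle. Without that definition the argument is a scaffold, not a proof. Moreover, your suggested route via Sylvester and adjugate identities runs into trouble past the first step: once you have averaged over even one swap, the polynomial is no longer $\det(tI-M)$ for any single symmetric $M$, so matrix-determinant identities are not directly available at the inductive step. Whatever common interlacer you extract must itself live in $\mathcal{D}$, and adjugate expressions of averaged polynomials are not obviously of that form.

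The paper resolves exactly this difficulty, and the resolution is the real content of the proof. Rather than parametrize by swap probabilities $z_1,\dots,z_K$, it works with polynomials $P(X_1,\dots,X_m)$ in \emph{matrix} variables and takes $\mathcal{D}$ to be the determinant-like polynomials: those that are hyperbolic along $(I,\dots,I)$ and \emph{linear} under rank-one perturbations of any argument. The base polynomial $\det(\sum_i X_i)$ is determinant-like by the matrix determinant lemma. The closure step is then clean: since $\sigma A\sigma^T-A$ has rank $2$ and trace $0$, it can be written as $aa^T-bb^T$, and rank-one linearity plus hyperbolicity (Lemma~\ref{lem:detlikeinter}) shows that the single intermediate restriction $P(tI-A_1+bb^T,\dots)$ interlaces both $P(tI-A_1,\dots)$ and $P(tI-\sigma A_1\sigma^T,\dots)$. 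This furnishes the common interlacer at every level of the induction without any explicit adjugate computation, and rank-one linearity is manifestly preserved under swap-averaging since conjugation by a permutation sends rank-one matrices to rank-one matrices. The $aa^T-bb^T$ decomposition is the missing idea in your proposal; once you have it, the Sylvester/Jacobi machinery is unnecessary.
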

An immediate consequence of Theorems \ref{thm:swapreal} and 
  \ref{thm:if}, applied to the family of polynomials indexed by all possible
  values of the swaps $S_{ij}$, is the following existence result.
\begin{theorem}[Interlacing Families for Permutations]\label{thm:perminterlace} Suppose $A_1,\ldots,A_m$ are symmetric
$d\times d$ matrices, and $P_1,\ldots,P_m$ are independent random
permutations realizable by swaps. Then, for every $k\le d$:
$$ \lambda_k\left(\sum_{i=1}^m P_i A_i P_i^T\right)\le
\lambda_k\left(\E\charp{\sum_{i=1}^m P_i A_i P_i^T}{x}\right),$$
with nonzero probability.
\end{theorem}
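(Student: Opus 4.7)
The plan is to deduce this statement as a direct application of Theorem \ref{thm:if}, using Theorem \ref{thm:swapreal} to supply the real-rootedness hypothesis. Since each $P_i$ is realizable by swaps, I write $P_i = S_{i,N_i}\cdots S_{i,1}$ for independent random swaps $S_{i,j}$. The joint distribution of the outcomes of all these swaps is then a product distribution $\mu$ on a Boolean cube $\Omega = \{0,1\}^{N}$, where $N = \sum_i N_i$, with one coordinate for each swap recording whether it acts as the identity or as its transposition.

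For each $\omega \in \Omega$, let $P_i(\omega)$ denote the deterministic permutation matrix obtained by instantiating the swaps $S_{i,j}$ according to the relevant coordinates of $\omega$, and define
$$ f_\omega(x) := \det\left(xI - \sum_{i=1}^m P_i(\omega) A_i P_i(\omega)^T\right). $$
Each $f_\omega$ is the characteristic polynomial of a real symmetric matrix, so it is real-rooted, monic, and of common degree $d$; thus the family $\{f_\omega\}_{\omega \in \Omega}$ satisfies the base hypotheses of Theorem \ref{thm:if}.

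To invoke Theorem \ref{thm:if}, it remains to check that $E_\nu(x) := \E_{\omega \sim \nu} f_\omega(x)$ is real-rooted for every product distribution $\nu = \nu_1 \otimes \cdots \otimes \nu_N$ on $\Omega$. The key observation is that any such $\nu$ is itself the outcome distribution of some collection of independent random swaps: one simply replaces each original Bernoulli parameter $\alpha_{i,j}$ by the corresponding marginal of $\nu$, keeping the swapped index pair fixed. Hence $E_\nu$ has exactly the form \eqref{eqn:swappol} and is real-rooted by Theorem \ref{thm:swapreal}. Applying Theorem \ref{thm:if} with the original distribution $\mu$ now yields, for each $k \le d$, an outcome $\omega_0$ in the support of $\mu$ (and therefore with $\mu(\omega_0) > 0$) satisfying $\lambda_k(f_{\omega_0}) \le \lambda_k(E_\mu)$. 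Since $E_\mu$ coincides with $\E\,\charp{\sum_i P_i A_i P_i^T}{x}$, this is precisely the claimed nonzero-probability bound. The only potential ``obstacle'' is the bookkeeping verification that product distributions on swap outcomes are stable under the class of random-swap laws -- a tautology once the decomposition above is fixed -- so all substantive work has already been done in Theorem \ref{thm:swapreal}.
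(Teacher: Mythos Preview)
Your argument is correct and is exactly the one the paper has in mind: it states Theorem~\ref{thm:perminterlace} as ``an immediate consequence of Theorems~\ref{thm:swapreal} and~\ref{thm:if}, applied to the family of polynomials indexed by all possible values of the swaps $S_{ij}$,'' and you have simply filled in those details. The one small point to be careful about is that Theorem~\ref{thm:if}, as literally stated, only promises $\omega_0\in\Omega$ rather than $\omega_0$ in the support of $\mu$; this is harmless here since the standard interlacing-families proof fixes coordinates one at a time within the support (and in the applications the swap parameters lie in $(0,1)$ anyway), but it is worth saying explicitly rather than asserting it as a direct consequence of the theorem's wording.
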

Theorem \ref{thm:perminterlace} is useful because the uniform
  distribution on permutations and its bipartite version, which we use to
  generate our random graphs, are realizable by swaps.
\begin{lemma} \label{lem:realizable} Let $P$ and $S$ be uniformly random
	$d\times d$ permutation matrices. Both $P$ and $P\oplus S$ are
	realizable by swaps, where $P \oplus S = \begin{pmatrix}
P & 0 \\
0 & S
\end{pmatrix}$ is the direct sum of $P$ and $S$.
\end{lemma}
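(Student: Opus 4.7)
The plan is to unroll the Fisher--Yates (Knuth) shuffle into an explicit finite list of independent fixed-pair random swaps. A na\"ive attempt to realize each Fisher--Yates ``step $k$'' (identity or $(i,k)$ for $i$ uniform in $\{1,\ldots,k-1\}$) directly as a product of swaps fails, since for distinct $i\neq j$ the product $(i,k)(j,k)$ is a $3$-cycle that lies outside the support of such a step; instead, the construction will realize the \emph{entire} uniform distribution on the symmetric group in one shot. For the block case $P\oplus S$, running the same construction in parallel on the disjoint index sets $\{1,\ldots,d\}$ and $\{d+1,\ldots,2d\}$ produces two independent sequences of random swaps that commute, and their concatenation then realizes $P\oplus S$.

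Concretely, for each $k=2,3,\ldots,d$ I would define ``round $k$'' to be the sequence of random adjacent swaps $S_{k-1,k}, S_{k-2,k-1}, \ldots, S_{1,2}$ applied in this order (so $S_{k-1,k}$ is applied first within the round), where the swap of indices $i, i+1$ flips with probability $\alpha_i = i/(i+1)$. The full sequence is round $2$, then round $3$, \ldots, then round $d$, for a total of $\binom{d}{2}$ independent random swaps. I would prove by induction on $k$ that the product of swaps accumulated after rounds $2,\ldots,k$ is uniformly distributed on the subgroup of $\mathrm{Sym}(\{1,\ldots,d\})$ that permutes $\{1,\ldots,k\}$ and fixes $\{k+1,\ldots,d\}$.

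The heart of the induction is a trajectory computation. Starting from the identity, round $k$ produces a random permutation $\rho_k \in \mathrm{Sym}(\{1,\ldots,k\})$ with the property that $\rho_k(k)$ is uniform on $\{1,\ldots,k\}$. Indeed, tracing where the index $k$ ends up: it lands at position $k-j$ precisely when the first $j$ swaps of round $k$ flip and the $(j{+}1)$-st does not, and the telescoping product $\tfrac{k-1}{k}\cdot\tfrac{k-2}{k-1}\cdots\tfrac{k-j}{k-j+1}\cdot\tfrac{1}{k-j}$ equals $\tfrac{1}{k}$ for each $j\in\{0,\ldots,k-1\}$. Equivalently, $\rho_k$ is uniformly distributed over the left cosets of the stabilizer $H\cong\mathrm{Sym}(\{1,\ldots,k-1\})$ of $k$. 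Combined with the inductive hypothesis that the accumulated $\sigma$ is uniform on $H$ and independent of $\rho_k$, a one-line coset count gives
\[
\Pr[\rho_k\sigma = \pi] \;=\; \Pr[\rho_k(k) = \pi(k)]\cdot\frac{1}{(k-1)!} \;=\; \frac{1}{k}\cdot\frac{1}{(k-1)!} \;=\; \frac{1}{k!}
\]
for every $\pi$ in the relevant symmetric group, closing the induction.

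I do not anticipate any serious technical obstacle; the argument reduces to the trajectory computation above and a routine coset count. The only care needed is bookkeeping about the order of swaps (which is applied first) and the side on which $\rho_k$ multiplies the accumulated state; once these conventions are fixed, both halves of the lemma follow.
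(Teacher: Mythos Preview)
Your proof is correct and takes a genuinely different route from the paper's. The paper builds the uniform distribution recursively as $M_k = M_{k-1}' \, S_{1k} \, M_{k-1}$, where $S_{1k}$ is a single swap of indices $1$ and $k$ and $M_{k-1}', M_{k-1}$ are two \emph{independent} copies of the stage-$(k-1)$ construction; the analysis checks that the value landing in position $k$ is uniform and that the second copy of $M_{k-1}$ re-randomizes the remaining $k-1$ positions. Your construction instead realizes each ``round $k$'' as a chain of adjacent swaps $S_{k-1,k},\ldots,S_{1,2}$ with probabilities $i/(i+1)$, proves by a telescoping trajectory computation that $\rho_k(k)$ is uniform, and closes the induction with a clean coset-count identity $\Pr[\rho_k\sigma=\pi]=\Pr[\rho_k(k)=\pi(k)]/(k-1)!$. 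Both arguments handle $P\oplus S$ identically, by running the construction on the two disjoint index sets and concatenating. The main practical difference is size: your sequence has $\binom{d}{2}$ swaps, whereas the paper's doubling recursion produces on the order of $2^{d}$ swaps. For the application (Theorem~\ref{thm:swapreal} and Theorem~\ref{thm:perminterlace}) only existence of \emph{some} finite realization matters, so either works; your version is tighter and arguably more transparent, while the paper's avoids the trajectory bookkeeping at the cost of the second independent copy.
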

\iffull{
\begin{proof} We will establish the claim for $P$ first. We proceed inductively.
	Let $M_2$ be a random swap which swaps $e_1$ and $e_2$ with
	probability $1/2$, and for $k>2$ let 
	$$M_k=M_{k-1}S_{1k}M_{k-1},$$
	where $S_{1k}$ swaps $e_1$ and $e_k$ with probability $1/k$. 
	
	Let $v=(1,2,3,\ldots,d)^T$. By induction, assume that the first $k-1$
	coordinates of $M_{k-1}v$ are in uniformly random order; in particular,
	that $(M_{k-1}v)(1)$ is a random element of $\{1,\ldots, k-1\}.$ 
	This means that:
	\begin{itemize}
		\item With probability $1/k$: $(M_{k-1}S_{1k}M_{k-1}v)(k)=k$ and
			the remaining indices contain a random permutation of
			$\{1,\ldots,k-1\}$.
		\item With probability $1-1/k$: $(M_{k-1}S_{1k}M_{k-1}v)(k)$ is
			a uniformly random element $j\in \{1,\ldots,k-1\}$ and
			the remaining indices contain a random permutation of
			$\{1,\ldots,k\}\setminus \{j\}.$
	\end{itemize}
	Thus, $M_k$ is uniformly random on $\{1,\ldots,k\}$, and by induction
	$M_d=P$.

	For $P\oplus S$, we use the above argument to realize $P\oplus I$ and
	$I\oplus S$ separately and then multiply them.
\end{proof}
}{See the full version for a proof.}
%The above procedure requires about $2^d$ swaps for $d\times d$ permutations.
%It is possible to do it with polynomially many swaps, but since we are only
%  interested in existence we do not pursue this here.

The rest of this section is devoted to proving Theorem \ref{thm:swapreal}. 
This is achieved by showing that the polynomials in \eqref{eqn:swappol} are
  univariate restrictions of certain nice multivariate polynomials.
\iffull{The relevant notion is the following.}{}

\begin{definition}[Determinant-like Polynomials] A homogeneous polynomial $P(X_1,\ldots,X_m)$ of degree $d$ in
	the entries of $m$ symmetric $d\times d$ matrices $X_1,\ldots,X_m$ is
	called {\em determinant-like} if it has the following two properties.
\begin{enumerate}
\item []\textbf{Hyperbolicity.} The univariate restrictions
	$$ q(t)=P(tI-A_1,\ldots, tI-A_m)$$
are real-rooted for all symmetric $A_1,\ldots, A_m$. 

This condition is known as {\em hyperbolicity} of the polynomial
  $P(X_1,\ldots,X_m)$ with respect to the point $(I,I,\ldots, I)$.
We do not discuss the notion of hyperbolicity further, since the self-contained 
  definition above suffices for this paper. We point the
  interested reader to \cite{pemantle} for a detailed discussion of
  the theory.

\item [] \textbf{Rank-1 Linearity.} For every vector $v$, index $i\le m$, and real number $s$, we have
$$ P(X_1,X_2,\ldots, X_i+svv^T,\ldots, X_m) = P(X_1,\ldots,X_m)+ sD_{i,vv^T}P(X_1,\ldots,X_m)$$
where
$$ D_{i,vv^T}P(X_1,\ldots,X_m)= \left(\frac{\partial}{\partial s}
P(X_1,\ldots,X_i+svv^T,\ldots, X_m)\right)\big|_{s=0}$$
is the directional derivative of $P$ in direction $(0,\ldots, vv^T,\ldots,0)$,
where $vv^T$ appears in the $i$th position. 
Note that $D_{i,vv^T}P(X_1,\ldots,X_m)$ is homogeneous of degree $d-1$.
\end{enumerate}
\end{definition}

An important example of a determinant-like polynomial is the determinant of a sum of
matrices:
$$P(X_1,\ldots,X_m) = \det(X_1+\ldots+X_m).$$
Hyperbolicity follows from the fact that
$$P(tI-A_1,\ldots,tI-A_m)=\det(mtI-A_1-\ldots-A_m)$$
is the characteristic polynomial of a symmetric matrix. 
Rank-1 linearity can be seen to follow from the invariance of the determinant under change of basis
  and its linearity with respect to matrix entries.
Alternatively, one can prove it by using the
 matrix
  determinant lemma, which tells us 
$$ \det(X_1+svv^T+\ldots +X_m)=\det(X_1+\ldots+X_m)+s\langle
vv^T,\det(X_1+\ldots+X_m)(X_1+\ldots+X_m)^{-1}\rangle.$$

The crux of the proof of Theorem \ref{thm:swapreal} lies in the fact that random swaps define linear operators
  which preserve the property of being determinant-like.
  \begin{lemma}[Random swaps preserve determinant-likeness]\label{lem:rswap} If $P(X_1,\ldots,X_m)$ is determinant-like, then
	for any $i\le m$ and random swap $S$, the polynomial
	$$ \E_{S} P(X_1,\ldots, SX_iS^T, \ldots, X_m)$$
	is determinant-like.
\end{lemma}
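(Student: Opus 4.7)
The plan is to verify both defining properties of determinant-likeness for the averaged polynomial $\tilde P(X_1,\ldots,X_m) := \E_S P(X_1,\ldots,SX_iS^T,\ldots,X_m)$, where $S$ equals $T := T_{st}$ with probability $\alpha$ and $I$ with probability $1-\alpha$.

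\emph{Rank-1 linearity} is the easy half. For slots $j\neq i$ it passes through the expectation directly from the rank-1 linearity of $P$. For the $i$-th slot the key observation is that conjugation by an orthogonal matrix preserves rank-1 structure: $S(X_i + svv^T)S^T = SX_iS^T + s(Sv)(Sv)^T$. Applying rank-1 linearity of $P$ in direction $(Sv)(Sv)^T$ to each realization of $S$ and averaging yields a function linear in $s$, with directional derivative $D_{i,vv^T}\tilde P = \E_S D_{i,(Sv)(Sv)^T}P(X_1,\ldots,SX_iS^T,\ldots,X_m)$.

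\emph{Hyperbolicity} is the main technical step. Set $q_1(t) := P(tI-A_1,\ldots,tI-A_i,\ldots,tI-A_m)$ and $q_2(t) := P(tI-A_1,\ldots,tI-TA_iT,\ldots,tI-A_m)$; both are real-rooted by hyperbolicity of $P$, since $A_i$ and $TA_iT$ are symmetric. We need real-rootedness of $\tilde q := (1-\alpha)q_1 + \alpha q_2$ for every $\alpha \in [0,1]$, which by the last statement of Lemma~\ref{lem:fisk} is equivalent to exhibiting a common interlacer of $q_1$ and $q_2$. To produce one I exploit the fact that $\Delta := TA_iT - A_i$ has rank at most two: writing $T = I - uu^T$ for $u = e_s - e_t$ and expanding gives $\Delta = -(u\tilde w^T + \tilde w u^T)$ with $\tilde w := A_iu - \tfrac{1}{2}(u^TA_iu)u \perp u$, yielding the spectral decomposition $\Delta = \lambda(w_+w_+^T - w_-w_-^T)$ with orthogonal unit eigenvectors $w_\pm$ lying in $\mathrm{span}\{u,\tilde w\}$. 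I then introduce the two-parameter family
$$H(\beta_1,\beta_2,t) := P(tI-A_1,\ldots,tI-A_i - \beta_1 w_+w_+^T + \beta_2 w_-w_-^T,\ldots,tI-A_m),$$
which is multilinear in $(\beta_1,\beta_2)$ by iterated rank-1 linearity, and real-rooted in $t$ for every $(\beta_1,\beta_2) \in \R^2$ by hyperbolicity of $P$ applied to the symmetric perturbed matrix, with $q_1 = H(0,0,\cdot)$ and $q_2 = H(\lambda,\lambda,\cdot)$.

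The main obstacle is converting real-rootedness of $H$ on the whole $(\beta_1,\beta_2)$-plane into a common interlacer of the specific endpoint polynomials $q_1$ and $q_2$. The approach is to apply Lemma~\ref{lem:fisk} along coordinate axes of $H$: real-rootedness in $t$ for all $\beta_1$ with $\beta_2$ fixed yields interlacing relations among the coefficient polynomials in the multilinear expansion of $H$, and similarly along the $\beta_2$-axis. Combining these interlacing relations one extracts a single degree-$(d-1)$ polynomial that interlaces both $q_1$ and $q_2$, establishing the common interlacer and hence real-rootedness of $\tilde q$ via Lemma~\ref{lem:fisk}.
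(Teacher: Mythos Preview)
Your setup matches the paper's: rank-1 linearity goes through exactly as you say (this is the paper's Lemma~\ref{lem:rank1properties}), and your explicit computation of $\Delta = TA_iT - A_i$ recovers the key structural fact (Lemma~\ref{lem:rank2}) that $\Delta$ is symmetric of rank at most two with trace zero, hence of the form $aa^T - bb^T$. The two-parameter family $H(\beta_1,\beta_2,t)$ is the right object to introduce.

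The gap is in your final paragraph. You assert that ``combining these interlacing relations one extracts a single degree-$(d-1)$ polynomial that interlaces both $q_1$ and $q_2$'', but you never name that polynomial or explain the extraction. The axis-wise interlacings you obtain from Lemma~\ref{lem:fisk}(1) among the multilinear coefficients of $H$ (e.g.\ $c_1 \longrightarrow c_0$, $(c_1+\lambda c_{12}) \longrightarrow (c_0+\lambda c_2)$, and their $\beta_2$-analogues) do not obviously assemble into a common interlacer of $q_1 = H(0,0,\cdot)$ and $q_2 = H(\lambda,\lambda,\cdot)$. The paper's resolution is to stop hunting for a degree-$(d-1)$ polynomial and instead exhibit a same-degree common interlacer directly: take $H(0,\lambda,\cdot)$. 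From the matrix $A_i - \lambda w_-w_-^T$ corresponding to $(0,\lambda)$, adding the rank-one PSD matrix $\lambda w_-w_-^T$ lands on $A_i$ (giving $q_1$), while adding $\lambda w_+w_+^T$ lands on $A_i+\Delta = TA_iT$ (giving $q_2$). Each such rank-one PSD update interlaces in the correct direction: rank-1 linearity plus hyperbolicity for all $s$ give, via Lemma~\ref{lem:fisk}(1) and then (2), that $P(\ldots,tI-B,\ldots) \longrightarrow P(\ldots,tI-B-vv^T,\ldots)$ (this is Lemma~\ref{lem:detlikeinter}). Hence $H(0,\lambda,\cdot) \longrightarrow q_1$ and $H(0,\lambda,\cdot) \longrightarrow q_2$, which is exactly the common interlacing needed for the last part of Lemma~\ref{lem:fisk}.
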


\iffull{
Before proving this lemma, we record some preliminary facts about
  determinant-like polynomials.
}{
This is proved using the following results, whose proofs appear in the full version.
}
  \begin{lemma}[Rank-1 updates interlace]\label{lem:detlikeinter} Suppose $P(X_1,\ldots, X_m)$ is determinant-like. Then
for every vector $v$ and symmetric matrices $A_1,\ldots, A_m$ we have
	$$P(tI-A_1,\ldots,tI-A_m)\longrightarrow P(tI-A_1,\ldots,
	tI-A_i-vv^T,\ldots tI-A_m),$$
	where $\longrightarrow$ denotes interlacing, pointing to the polynomial
	with the largest root.
\end{lemma}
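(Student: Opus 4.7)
The plan is to express the claimed interlacing as the relationship between two real-rooted polynomials that differ by a single directional derivative, and then invoke Lemma~\ref{lem:fisk}. Set
$$q(t) := P(tI-A_1,\ldots,tI-A_m) \AND r(t) := P(tI-A_1,\ldots,tI-A_i-vv^T,\ldots,tI-A_m),$$
both of which are real-rooted by hyperbolicity. Rank-1 linearity, applied in the $i$th slot with $s=-1$, expresses their difference as a directional derivative: $r(t)=q(t)-g(t)$, where $g(t):=D_{i,vv^T}P(tI-A_1,\ldots,tI-A_m)$ has degree at most $d-1$ by the homogeneity remark in the definition of determinant-like.

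The main step would then be to show $g \longrightarrow q$, after which Lemma~\ref{lem:fisk}(2) immediately yields the desired $q\longrightarrow q-g=r$. To establish $g\longrightarrow q$, I would appeal to Lemma~\ref{lem:fisk}(1), reducing the task to verifying that $q(t)+\alpha g(t)$ is real-rooted for every $\alpha\in\R$. The key observation is that rank-1 linearity, applied in the \emph{opposite} direction with $s=\alpha$, rewrites this linear combination as
$$ q(t)+\alpha g(t)=P(tI-A_1,\ldots,tI-(A_i-\alpha vv^T),\ldots,tI-A_m);$$
since $A_i-\alpha vv^T$ is symmetric, hyperbolicity of $P$ at $(I,\ldots,I)$ then delivers real-rootedness for free.

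I do not anticipate a serious obstacle, as the argument is essentially a direct pairing of rank-1 linearity with Lemma~\ref{lem:fisk}. The one mild technicality is that $g$ may a priori have degree strictly smaller than $d-1$ rather than exactly $d-1$, so the degree hypothesis of Lemma~\ref{lem:fisk}(1) is not literally satisfied; however, the conclusion ``real-rooted for every $\alpha$ implies interlacing with largest root of $q$ no smaller'' still goes through in this degenerate case (e.g.\ by a limiting perturbation that restores the degree), so the sketch above suffices without modification.
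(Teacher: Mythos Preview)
Your argument is essentially the same as the paper's: both express the rank-one perturbation via rank-1 linearity as $q - g$ with $g = D_{i,vv^T}P$, use hyperbolicity to see that $q + \alpha g$ is real-rooted for all $\alpha$, apply Lemma~\ref{lem:fisk}(1) to get $g\longrightarrow q$, and then Lemma~\ref{lem:fisk}(2) to conclude $q\longrightarrow q-g$. If anything, you are slightly more careful than the paper in flagging the degenerate case where $g$ has degree strictly below $d-1$.
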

\iffull{\begin{proof} Assume without loss of generality that $i=1$.
By rank-1 linearity,
$$ P(tI-A_1-svv^T,\ldots,tI-A_m) =
P(tI-A_1,\ldots,tI-A_m)-sD_{vv^T}P(tI-A_1,\ldots,tI-A_m).$$
By the hyperbolicity of $P$, we know that this is real rooted when viewed as a univariate polynomial in
  $t$.
Since $D_{1,vv^T}P$ is of degree one less than $P$, the first part of Lemma \ref{lem:fisk}  implies that
$$ D_{1,vv^T}P(tI-A_1,\ldots,tI-A_m)\longrightarrow P(tI-A_1,\ldots,tI-A_m),$$
which in turn by the second part of Lemma \ref{lem:fisk} gives
\begin{align*}
	P(tI-A_1,\ldots,tI-A_m) & \longrightarrow  P(tI-A_1,\ldots,tI-A_m) -
D_{1,vv^T}P(A_1-tI,\ldots,A_m-tI)\\
&=P(tI-A_1-vv^T,\ldots,tI-A_m),\end{align*}
as desired.
\end{proof}
}{}
\begin{lemma}[Permutations preserve rank-1 linearity] \label{lem:rank1properties} (1) If $\Pi$ is a permutation matrix and $P(X_1,\ldots,X_m)$ is rank-1 linear then $P(\Pi X_1
\Pi^T,X_2,\ldots,X_m)$ is also rank-1 linear. (2) If $P$ and $Q$ are rank-1
linear then so is $P+Q$.\end{lemma}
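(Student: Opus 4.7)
The plan is to verify the rank-1 linearity property directly from the definition for both parts, exploiting two basic facts: that conjugation by a permutation matrix preserves the rank-one structure of a symmetric update (because $\Pi(svv^T)\Pi^T = s(\Pi v)(\Pi v)^T$), and that rank-1 linearity as defined in the excerpt is stated as a purely linear/affine condition in the perturbation parameter $s$, so it is closed under addition of polynomials.

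For part (1), I would set $\tilde P(X_1,\ldots,X_m) := P(\Pi X_1\Pi^T, X_2,\ldots,X_m)$ and check rank-1 linearity separately in each slot. For $i\ge 2$, the variable $X_i$ is not touched by the $\Pi$-conjugation, so the rank-1 linearity of $\tilde P$ in $X_i$ is immediate from that of $P$. For $i=1$, the computation
\[
\tilde P(X_1 + svv^T, X_2,\ldots,X_m) = P\bigl(\Pi X_1\Pi^T + s(\Pi v)(\Pi v)^T, X_2,\ldots, X_m\bigr)
\]
reduces the problem to a rank-one update of $P$ in its first slot along the direction $(\Pi v)(\Pi v)^T$; applying rank-1 linearity of $P$ shows the right-hand side is affine in $s$, which is exactly what is needed. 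The directional derivative $D_{1,vv^T}\tilde P$ will then equal $D_{1,(\Pi v)(\Pi v)^T} P$ evaluated at the conjugated argument.

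For part (2), I would simply apply the defining identity of rank-1 linearity to $P$ and $Q$ and add: for each $i$ and each $v$,
\[
(P+Q)(X_1,\ldots,X_i + svv^T,\ldots, X_m) = (P+Q)(X_1,\ldots,X_m) + s\bigl(D_{i,vv^T}P + D_{i,vv^T}Q\bigr)(X_1,\ldots,X_m),
\]
and observe that $D_{i,vv^T}(P+Q) = D_{i,vv^T}P + D_{i,vv^T}Q$ by linearity of differentiation, so $P+Q$ satisfies the rank-1 linearity identity with directional derivative $D_{i,vv^T}P + D_{i,vv^T}Q$.

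Neither part presents any real obstacle: both are essentially tautologies once the definition of rank-1 linearity is unpacked. The only thing to be mildly careful about is that the definition in the paper singles out one slot at a time, so for part (1) one must remember to check the property in every slot, not just the one being conjugated; and for part (2) one should note that $P$ and $Q$ are implicitly assumed to have matching degrees and variable arities so that $P+Q$ is a well-defined polynomial in the same set of matrix variables.
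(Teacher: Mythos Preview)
Your proof is correct and follows exactly the same approach as the paper: part (1) rests on the observation that $\Pi(svv^T)\Pi^T = s(\Pi v)(\Pi v)^T$ (i.e., conjugation by a permutation preserves rank-one matrices), and part (2) on the linearity of the directional derivative operator $D_{i,vv^T}$. The paper's own proof is the two-line version of what you wrote, so there is nothing to add.
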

\iffull{\begin{proof} (1) is true because the set of rank one matrices is invariant
under conjugation by permutations.
(2) holds because $D_{i,vv^T}$ is a linear operator.
\end{proof}
}{}
\iffull{We will also need the following elementary observation, which says that random
  swaps correspond to trace zero rank two updates.
This is the structural property which causes interlacing to occur.}
{The structural property of swaps which causes interlacing to occur is the
 following.}
\begin{lemma}\label{lem:rank2} If $\sigma$ is a transposition and $A$ is symmetric then $A-\sigma
	A\sigma^T$ has rank $2$ and trace $0$.\end{lemma}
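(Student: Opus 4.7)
The plan is to reduce everything to the explicit rank-one representation of a transposition. Any transposition $\sigma$ swapping indices $s$ and $t$ can be written as $\sigma = I - uu^{T}$, where $u = e_{s} - e_{t}$; a direct check on the basis vectors $e_s, e_t$, and $e_i$ for $i\notin\{s,t\}$ verifies this, and since $\|u\|^{2}=2$ one sees that $(I-uu^T)^2 = I$, so this really is an involution. From here the entire lemma becomes an algebraic computation in a two-dimensional subspace.

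Using $\sigma = \sigma^{T} = I - uu^{T}$, I would expand
\[
\sigma A \sigma^{T} = (I-uu^{T})\,A\,(I-uu^{T}) = A - uu^{T}A - Auu^{T} + (u^{T}Au)\,uu^{T},
\]
so that
\[
A - \sigma A \sigma^{T} = u v^{T} + v u^{T} - (u^{T}v)\,uu^{T},
\]
where $v := Au$ and I have used $uu^{T}A = u(Au)^{T}$ by symmetry of $A$. This expression makes both claims transparent.

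For the rank bound, every column of $u v^{T} + v u^{T} - (u^{T}v)uu^{T}$ lies in $\spn{u,v}$, so the rank is at most $2$ (and equals $2$ generically; in degenerate cases, e.g.\ when $A$ commutes with $\sigma$, the difference may vanish, so the claim is really rank $\le 2$, which is all that is needed to interpret swaps as two successive rank-one updates when applying Lemma \ref{lem:detlikeinter}). For the trace, using $\trace{xy^{T}} = y^{T}x$ and $u^{T}u = 2$,
\[
\trace{u v^{T} + v u^{T} - (u^{T}v)uu^{T}} \;=\; v^{T}u + u^{T}v - (u^{T}v)(u^{T}u) \;=\; 2\,u^{T}v - 2\,u^{T}v \;=\; 0.
\]

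There is no real obstacle: the lemma is a one-line identity once the rank-one form of $\sigma$ is in hand. The only point requiring a little care is the slight mismatch between the stated rank $2$ and the actual inequality rank $\le 2$ in degenerate cases; since the lemma will be used to decompose the swap update $A - \sigma A \sigma^{T}$ as a sum of two rank-one, trace-zero symmetric updates and feed them into the interlacing machinery for determinant-like polynomials, the bound rank $\le 2$ is precisely what is needed, and the trace-zero condition ensures the two rank-one pieces have opposite signs so that Lemma \ref{lem:detlikeinter} can be applied in either direction.
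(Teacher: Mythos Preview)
Your proof is correct. The paper takes a more elementary route: it assumes without loss of generality that $\sigma$ swaps the first two coordinates, writes out the entries of $A-\sigma A\sigma^{T}$ explicitly, and inspects the resulting block matrix to see directly that the rows lie in a two-dimensional span and that the diagonal entries cancel. Your argument instead uses the coordinate-free identity $\sigma = I - uu^{T}$ with $u=e_s-e_t$ to obtain the closed form $A-\sigma A\sigma^{T} = uv^{T}+vu^{T}-(u^{T}v)uu^{T}$, from which both the rank bound and the trace vanish immediately. Your approach is slicker and has the advantage of handing you an explicit expression in $\spn{u,v}$, which makes the subsequent decomposition into $\pm$ rank-one pieces (as used in the proof of Lemma~\ref{lem:rswap}) essentially automatic; the paper's approach is more pedestrian but requires no algebraic insight beyond writing the matrix down. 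You are also right that the stated ``rank $2$'' is really ``rank at most $2$,'' and that this is all that is needed downstream.
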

\iffull{\begin{proof} Assume without loss of generality that $\sigma$ swaps the first two
coordinates. Then by symmetry the difference $A-\sigma A\sigma^T$ has entries
$$ \begin{bmatrix}
a_{11} - a_{22} & a_{12}-a_{21} & a_{13}-a_{23} & a_{14}-a_{24} & \ldots \\
a_{21} - a_{12} & a_{22}-a_{11} & a_{23}-a_{13} & a_{24}-a_{14} & \ldots \\
a_{31} - a_{32} & a_{32}-a_{31} & 0 & \ldots\\
a_{41} - a_{42} & a_{42}-a_{41} & 0 & \ldots \\
\ldots\\
\end{bmatrix} 
= 
\begin{bmatrix}
\alpha & \beta & v^T\\
\beta & -\alpha  & -v^T \\
v & -v & 0
\end{bmatrix}
$$
for some numbers $\alpha,\beta$ and some column vector $v$ of length $d-2$. If
$\alpha\neq\beta$ then the sum of the first two rows is equal to $(c,-c,0,\ldots,
0)$ for some $c\neq 0$, and every other row is a scalar multiple of this. On the
other hand, if $\alpha=\beta$ then the first two rows are linearly dependent, and 
all of the other rows are multiples of $(1,-1, 0, \ldots, 0).$
\end{proof}
}{}
\iffull{We can now complete the proof of Lemma \ref{lem:rswap}
\begin{proof}[Proof of Lemma \ref{lem:rswap}] Assume $P$ is determinant-like,
	and let $S$ be a random swap, equal to some transposition $\sigma$ with
	probability $\alpha$ and the identity with probability $(1-\alpha)$.
We will show that
$$ Q(X_1,\ldots, X_m) = (1-\alpha)P(X_1,\ldots,X_m) +
\alpha P(X_1,\ldots, \sigma X_i \sigma^T,\ldots, X_m),$$
is hyperbolic and rank-1 linear. It is clear that $Q(X_1,\ldots,X_m)$  is homogeneous since swaps and
convex combinations preserve homogeneity. Lemma \ref{lem:rank1properties} implies that rank-1 linearity
is also preserved, so all that remains is hyperbolicity. Assume without loss of generality that $i=1$ and consider any
univariate restriction along $(I,I,\ldots,I)$:
\begin{equation}\label{eqn:rswapQ}
 Q(tI-A_1,\ldots,tI-A_m) = 
(1-\alpha)P(tI-A_1,\ldots,tI-A_m) +
\alpha P(tI-\sigma A_1\sigma^T,\ldots,  tI-A_m).
\end{equation}
We need to show that this has all real roots. Observe that the second polynomial
may be written as 
$$ P(tI-A_1-aa^T+bb^T,\ldots,tI-A_m),$$
for some vectors $a$ and $b$ , since $\sigma A_1\sigma^T-A_1$ is rank two and
  trace zero by Lemma \ref{lem:rank2}.
Since $P$ is determinant-like, Lemma \ref{lem:detlikeinter} tells us that
$$ P(tI-A_1+bb^T,\ldots,tI-A_m)\longrightarrow
P(tI-A_1-aa^T+bb^T,\ldots,tI-A_m)$$
and
$$ P(tI-A_1+bb^T,\ldots,tI-A_m)\longrightarrow
P(tI-A_1,\ldots,tI-A_m),$$
whence the two polynomials on the right hand side of \eqref{eqn:rswapQ} 
  have a common interlacing.
Lemma \ref{lem:fisk} then implies that  their convex
  combination must be real-rooted, and the claim is proved.
\end{proof}
}{}
Applying Lemma \ref{lem:rswap} inductively yields Theorem \ref{thm:swapreal}.
\begin{proof}[Proof of Theorem \ref{thm:swapreal}]
	Applying Lemma \ref{lem:rswap} $nN$ times (once for every swap $S_{ij}$) 
starting with $P(X_1,\ldots,X_m)=\det(\sum_i X_i)$ tells us that 
$$ \E_{S_{1N}}\ldots\E_{S_{11}} \E_{S_{2N}}\ldots\E_{S_{n1}}
\det\left(\sum_{i=1}^n \left(\prod_{j=N}^1 S_{ij} \right) X_i
  \left(\prod_{j=1}^N S_{ij}^T \right) \right)$$
  is determinant-like.  Considering the restriction $X_i=(t/m)I-A_i$ finishes the proof.
\end{proof}

\section{Quadrature}\label{sec:quad}
In this section, we show that the expected characteristic polynomials 
  we are interested in are free convolutions
  of the characteristic polynomials of perfect matchings,
  after the trivial eigenvalues corresponding to the all ones vector are removed. 
This gives us explicit formulas for these polynomials, and more importantly
  (since we understand the behavior of roots under free convolutions) a way of
  bounding their roots.
We begin by showing how to do this for the symmetric case, which is more
  transparent and contains all the main ideas.
In Section \ref{sec:quadbip} we derive the result for the bipartite case as
  a corollary of the result for the symmetric case.

\subsection{Quadrature for Symmetric Matrices}

The following theorem gives an explicit formula for the expected characteristic polynomial
  of the sum of two symmetric matrices with constant row sums when the rows and columns of one of the matrices
  is randomly permuted.
This can be used to compute the expected characteristic polynomial of the Laplacian matrix
  of the sum of two graphs when one is randomly permuted.
In this paper, we use the result to compute the expected characteristic polynomial of the adjacency matrix
  when both graphs are regular.

\begin{theorem}\label{thm:symquadconv}
Suppose $A$ and $B$ are symmetric $d\times d$ matrices with $A\onevec = a\onevec$ and
$\onevec = b\onevec$. Let $\charp{A}{x}=(x-a)p(x)$ and
	$\charp{B}{x}=(x-b)q(x)$. Then,
	\begin{equation}\label{eqn:symperm}
	\E_{P}\charp{A+PBP^T}{x}=(x-(a+b))p(x)\sqsum_{d-1} q(x),\end{equation}
	where $P$ is a uniformly random permutation.
\end{theorem}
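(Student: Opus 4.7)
The plan is to first strip off the trivial eigenvalue $a+b$ associated to the common invariant vector $\onevec$, and then to prove a quadrature identity that allows replacing the average over permutations restricted to $\onevec^\perp$ by an average over Haar-random orthogonal matrices on $\onevec^\perp$.

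For the first step, observe that $A\onevec=a\onevec$, $B\onevec=b\onevec$, and $P\onevec=\onevec$ for every permutation $P$, so $\onevec$ is a common eigenvector of $A+PBP^T$ with eigenvalue $a+b$. Fix any orthonormal basis $u_1,\ldots,u_{d-1}$ of $\onevec^\perp$, let $U=[u_1,\ldots,u_{d-1}]\in\R^{d\times (d-1)}$, and set $A':=U^TAU$ and $B':=U^TBU$, so that $\charp{A'}{x}=p(x)$ and $\charp{B'}{x}=q(x)$. Since $P$ preserves both $\onevec$ and $\onevec^\perp$, the restriction $P':=U^TPU$ lies in $O(d-1)$ (a direct check using $UU^T=I-\tfrac1d\onevec\onevec^T$ gives $P'^TP'=I$), and in the basis $(U,\onevec/\sqrt d)$ the matrix $A+PBP^T$ is block-diagonal, yielding
$$\charp{A+PBP^T}{x} \;=\; (x-(a+b))\,\charp{A'+P'B'P'^T}{x}.$$
Taking expectations, the theorem reduces to the quadrature identity
$$\E_P \charp{A'+P'B'P'^T}{x} \;=\; \E_Q \charp{A'+QB'Q^T}{x} \;=\; p(x)\sqsum_{d-1}q(x),$$
where $Q$ is Haar-distributed on $O(d-1)$ and the rightmost equality is the definition of $\sqsum_{d-1}$.

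For the second step, I would expand $\charp{A'+YB'Y^T}{x}$ as a polynomial in the entries of a generic orthogonal $Y$. Splitting each principal minor $\det((A'+YB'Y^T)[I,I])$ by multilinearity of the determinant in rows (choosing which rows come from $A'$ and which from $YB'Y^T$), and then using Cauchy--Binet on the $YB'Y^T$-blocks, i.e. $\det((YB'Y^T)[R,R])=\sum_{S,T}\det(Y[R,S])\det(B'[S,T])\det(Y[R,T])$, each coefficient of $x$ becomes a sum of terms of the form
$$c(A',B')\cdot \det(Y[R,S])\,\det(Y[R',T]),$$
where $c(A',B')$ is a polynomial in the entries of $A'$ and $B'$. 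Thus the identity will follow once we show, for every compatible quadruple $(R,R',S,T)$, that
$$\E_P\bigl[\det(P'[R,S])\det(P'[R',T])\bigr] \;=\; \E_Q\bigl[\det(Q[R,S])\det(Q[R',T])\bigr].$$
The pairwise base case is easy and illustrates the phenomenon: a direct computation using $U^T\onevec=0$ and the elementary formula $\E_P P_{kl}P_{k'l'}=\tfrac1d\delta_{kk'}\delta_{ll'}+\tfrac1{d(d-1)}(1-\delta_{kk'})(1-\delta_{ll'})$ gives $\E_P P'_{ij}P'_{i'j'}=\delta_{ii'}\delta_{jj'}/(d-1)$, which exactly matches the Haar moment $\E_Q Q_{ij}Q_{i'j'}$ on $O(d-1)$.

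The hard part will be bootstrapping from this pairwise agreement to agreement on products of two minors of arbitrary size, since the symmetric group is certainly not an $\infty$-design on $O(d-1)$. The mechanism that makes this work is the one hinted at in the introduction: because the determinant is \emph{multilinear}, the alternating signs in the two minor expansions cause massive cancellations, so that the polynomial $\charp{A'+YB'Y^T}{x}$ ``sees'' only certain low-degree symmetric combinations of the entries of $Y$ (a $2$-design-like property of the standard representation of $S_d$ on $\onevec^\perp$, upgraded from entries to minors). Concretely, I would either verify this combinatorial collapse directly---matching the Weingarten formula for $O(d-1)$ against the sum-over-partial-matchings expansion of $\E_P[\det(P'[R,S])\det(P'[R',T])]$---or, more cleanly, match the explicit coefficient formula for $p\sqsum_{d-1}q$ from \cite{mssfinite} term by term against the expansion of $\E_P\charp{A'+P'B'P'^T}{x}$. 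Either route completes the proof when combined with the block decomposition of the first step.
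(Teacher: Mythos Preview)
Your first step---changing basis to split off the trivial eigenvector $\onevec$ and reducing to the quadrature identity $\E_{P'}\charp{A'+P'B'P'^T}{x}=\E_Q\charp{A'+QB'Q^T}{x}$ on $\onevec^\perp$---is correct and matches the paper exactly (equations \eqref{eqn:blockdiag}--\eqref{eqn:hatp}). The gap is in the second step. Your plan is to expand $\det(A'+YB'Y^T)$ via multilinearity and Cauchy--Binet into a combination of products $\det(Y[J,S])\det(Y[K,T])$ and then match the $S_d$-moments of each such product against the Haar moments. You verify only the $|J|=1$ case and then defer the rest to an unperformed Weingarten computation or a term-by-term comparison with the explicit convolution formula. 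Neither is carried out, and neither is routine: since one can isolate any single product $\det(Y[J,S])\det(Y[K,T])$ by choosing symmetric $A',B'$ appropriately, establishing the moment identity for \emph{every} quadruple is equivalent in strength to the full quadrature theorem, so your reduction has not actually broken the problem into easier pieces.

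The paper takes a different route that sidesteps moment calculations entirely. It shows that $\E_{P\in A_d}\det(A+PBP^T)$ is \emph{invariant} under right-multiplication of $P$ by any $Q_0\in O(d)$, which by a one-line averaging argument (Lemma~\ref{lem:invarquad}) forces equality with the Haar average. The invariance is established locally: the key structural fact (Lemma~\ref{lem:gt3}) is that $\det(A+(R_\theta\oplus I)B(R_\theta\oplus I)^T)$, as a function of $\theta$, is a trigonometric polynomial of degree at most $2$, so averaging over the three rotations in $A_2\subset O(2)$ already reproduces the full $O(2)$-average. Since the copies of $O(2)$ acting on $2$-faces of the regular simplex generate $O(d)$ (Lemma~\ref{lem:generate}), composing these local invariances gives the global one. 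This is the ``low Fourier degree'' phenomenon you allude to, but exploited through plane rotations rather than through minor expansions---and it is precisely what makes the argument go through without any explicit moment-matching.
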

\iffull{}{This is a consequence of the following results whose proofs may be found in the full version.}

\iffull{We begin by writing \eqref{eqn:symperm} in a more concrete form.
Observe that all of the matrices $A,B,P$ have $\onevec$ as a left
  and right eigenvector, which means that there is an orthogonal change of basis $V$ (for
  concreteness,  mapping
  $\onevec$ to the standard basis vector $e_n$) that
  simultaneously block diagonalizes all of them:
\begin{equation}\label{eqn:blockdiag} VAV^{T}=\hat{A}\oplus a,\quad VBV^{T}=\hat{B}\oplus b,\quad
  	VPV^{T}=\hat{P}\oplus 1,\end{equation}
where $\hat{A}\oplus a$ denotes the direct sum
$$ \begin{bmatrix} \hat{A} & 0 \\ 0 & a\end{bmatrix}.$$
Since the determinant is invariant under change of basis, we may write
\begin{align}\label{eqn:dividehat}
	\nonumber \E_P \det(xI-A-PBP^T)
	&=\E_{P}\det(xI-VAV^{T}-(VPV^{T})(VBV^{T})(VP^TV^{T}))
	\\\nonumber &= \E_{\hat{P}}\det(xI-(\hat{A}\oplus a)-(\hat{P}\oplus 1)(\hat{B}\oplus b)(\hat{P}^T\oplus 1)) 
	\\&=(x-a-b)\E_{\hat{P}}\det(xI-\hat{A}-\hat{P}\hat{B}\hat{P}^T).
\end{align}
Notice also that $p(x)=\charp{\hat{A}}{x}$ and $q(x)=\charp{\hat{B}}{x}$, so 
$$ p(x)\sqsum q(x) = \E_Q \det(xI-\hat{A}-Q\hat{B}Q^T),$$
where $Q$ is a (Haar) random $(d-1)\times (d-1)$ orthogonal matrix.
Thus, \eqref{eqn:symperm} is equivalent to showing that 
\begin{equation}\label{eqn:hatp} \E_{\hat{P}}\det(xI-\hat{A}-\hat{P}\hat{B}\hat{P}^T) =
	\E_{{Q}}\det(xI-\hat{A}-{Q}\hat{B}{Q}^T),\end{equation}
for all $(d-1)\times (d-1)$ symmetric matrices $\hat{A},\hat{B}$. 
Note that for any permutation $P$, the orthogonal transformation $\hat{P}$ correspondingly permutes
$\hat{e_1},\ldots,\hat{e_n}$, the projections orthogonal to $\onevec$ of the standard basis vectors
$e_1,\ldots,e_d$, embedded in $\R^{d-1}$. Since these are the vertices of a
regular simplex with $d$ vertices in $\R^{d-1}$ centered at the origin, we interpret the $\hat{P}$ as
elements of the symmetry group of this simplex.
We denote this subgroup of $O(d-1)$ by $A_{d-1}$.

Since there is no longer any assumption on $\hat{A}, \hat{B}$ other than
symmetry, we may absorb the $xI$ term into $\hat{A}$ in \eqref{eqn:hatp}, and we
see that it is sufficient to establish the following.
}{}
\iffull{\begin{theorem}[Quadrature Theorem]\label{thm:mainquad} For symmetric $d\times d$ matrices $A$ and $B$,
	\begin{equation}\label{eqn:mainquad} \expec{P\in
		A_{d}}\det(A+PBP^T)=\expec{Q\in
	O(d)}\det(A+QBQ^T).\end{equation}
\end{theorem}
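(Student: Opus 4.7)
The plan is to reduce (\ref{eqn:mainquad}) to a representation-theoretic identity that follows from Schur's lemma. First, I would expand $\det(A+QBQ^T)$ into a polynomial in the minors of $Q$: column-multilinearity gives
\[
  \det(A+N) = \sum_{|S|=|T|} (-1)^{\sum S+\sum T}\det(A_{\bar T,\bar S})\det(N_{T,S}),
\]
and applying Cauchy--Binet twice to $N_{T,S}=(QBQ^T)_{T,S}$ expresses each $\det(N_{T,S})$ as a sum over $|\alpha|=|\beta|=|T|$ of $\det(Q_{T,\alpha})\det(B_{\alpha,\beta})\det(Q_{S,\beta})$. Since the coefficients that arise involve arbitrary minors of $A$ and $B$, the theorem reduces to showing that for every $k\le d$ and all $k$-subsets $T,S,\alpha,\beta\subseteq[d]$,
\[
  \expec{P\in A_d}[\det(P_{T,\alpha})\det(P_{S,\beta})]
  = \expec{Q\in O(d)}[\det(Q_{T,\alpha})\det(Q_{S,\beta})].
\]

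Second, I would recognize that $\det(Q_{T,\alpha}) = \langle e_T,(\wedge^k Q)\,e_\alpha\rangle$ is the matrix entry of the $k$th exterior power representation of $O(d)$ on $\wedge^k V$ (with $V=\R^d$ and the standard basis $\{e_S:|S|=k\}$). The products $\det(Q_{T,\alpha})\det(Q_{S,\beta})$ are therefore matrix entries of $\wedge^k Q\otimes\wedge^k Q$, and averaging over any compact subgroup $G\subseteq O(d)$ yields the orthogonal projection of $\wedge^k V\otimes \wedge^k V$ onto its $G$-invariant subspace. Via the canonical identification $\wedge^k V\otimes \wedge^k V\cong\operatorname{End}(\wedge^k V)$ provided by the inner product, these $G$-invariants are precisely the $G$-module endomorphisms of $\wedge^k V$.

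The third step is to invoke Schur's lemma, which requires $\wedge^k V$ to be irreducible both as an $O(d)$-module (classical) and as an $A_d\cong S_{d+1}$-module. For the latter, $V$ is the standard $d$-dimensional irreducible representation of $S_{d+1}$, and a computation via Pieri's rule (or directly from the hook-length formula) identifies $\wedge^k V$ with the irreducible hook representation $\chi^{(d+1-k,\,1^k)}$ of dimension $\binom{d}{k}$, for each $0\le k\le d$. Consequently both centralizers are one-dimensional, spanned by the identity, so the two projections coincide, both being the rank-one operator proportional to $|\omega\rangle\langle\omega|$ with $\omega=\sum_{|S|=k}e_S\otimes e_S$. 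Reading off matrix entries gives the required equality of second moments of minors. The main (and only non-elementary) obstacle is this $A_d$-irreducibility of the exterior powers $\wedge^k V$; once in hand, the Cauchy--Binet reduction and the appeal to Schur are routine.
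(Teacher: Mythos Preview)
Your argument is correct and takes a genuinely different route from the paper. You reduce (\ref{eqn:mainquad}) via Cauchy--Binet to equality of all second moments $\E[\det(P_{T,\alpha})\det(P_{S,\beta})]$, interpret these as matrix entries of $\wedge^k P\otimes\wedge^k P$, and conclude by Schur: since $\wedge^k\R^d$ is absolutely irreducible both as an $O(d)$-module and as an $A_d\cong S_{d+1}$-module (the hook $\chi^{(d+1-k,1^k)}$), both averaged operators equal the orthogonal projection onto the same one-dimensional commutant. The paper instead argues elementarily: Lemma~\ref{lem:gt3} shows that $\det(A+\rstt B\rstt^T)$ has Fourier degree at most $2$ in $\theta$, so averaging over the three rotations in $Z_3\subset A_2$ already equals the Haar average on $SO(2)$; this yields invariance of $\expec{P\in A_d}\det(A+PBP^T)$ under right-multiplication by each $O_{i,j,k}$, and since these generate $O(d)$ (Lemma~\ref{lem:generate}) one obtains full $O(d)$-invariance, whence quadrature by Lemma~\ref{lem:invarquad}. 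Your approach is more conceptual---it exhibits the result as an equality of $G$-invariants in $\wedge^k V\otimes\wedge^k V$ and would apply to any finite subgroup of $O(d)$ on which all $\wedge^k V$ remain irreducible---at the cost of importing that nontrivial fact for $S_{d+1}$. The paper's approach is self-contained, localizes everything to two-dimensional rotations, and makes the bipartite extension (Corollary~\ref{cor:oplusquad}) a one-line modification by replacing $P$ with $P\oplus I$.
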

}{
\begin{theorem}[Quadrature Theorem]\label{thm:mainquad} Given symmetric $d\times d$ matrices $A,B$,
	\begin{equation}\label{eqn:mainquad} \expec{P\in
		A_{d}}\det(A+PBP^T)=\expec{Q\in
	O(d)}\det(A+QBQ^T),\end{equation}
where $A_{d}$ is the group of isometries of a regular simplex centered at the
  origin in $\R^{d}$.
\end{theorem}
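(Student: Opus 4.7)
The plan is to reduce the identity to an equality of moments of exterior powers of $M$, and verify this equality via Schur's lemma using the irreducibility of appropriate representations.

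First, I would expand the determinant using multilinearity (Laplace expansion) combined with Cauchy--Binet applied to the product $MBM^T$, obtaining
\[
\det(A + MBM^T) = \sum_{k=0}^d \sum_{|S|=|T|=|U|=|V|=k} c_{S,T,U,V}(A, B)\, \det(M[S,U]) \det(M[T,V]),
\]
where $M[S,U]$ denotes the $k\times k$ submatrix of $M$ with rows $S$ and columns $U$, and the coefficients $c_{S,T,U,V}$ depend only on minors of $A$ and $B$. Since $\det(M[S,U])$ equals (up to a fixed sign depending on orderings) the entry $(\Lambda^k M)_{S,U}$ of the $k$-th exterior power, it suffices to show that the tensor $\mathbb{E}\,(\Lambda^k M)\otimes(\Lambda^k M)\in\mathrm{End}(\Lambda^k\R^d\otimes\Lambda^k\R^d)$ agrees for $M$ uniform on $A_d$ versus $M$ Haar on $O(d)$, for every $k\le d$.

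Second, I would apply Schur's lemma on each side. Letting $G$ denote either $A_d$ or $O(d)$, the left- and right-$G$-invariance of the measure combined with the orthogonality relation $(\Lambda^k M)^T(\Lambda^k M) = I$ forces $\mathbb{E}(\Lambda^k M)\otimes(\Lambda^k M)$ to be a scalar multiple of the canonical invariant tensor $\Omega\,\Omega^T$ with $\Omega=\sum_S e_S\otimes e_S$, \emph{provided} $\Lambda^k\R^d$ is irreducible under the $G$-action. A trace computation then pins the scalar down to $1/\binom{d}{k}$, so
\[
\mathbb{E}\,(\Lambda^k M)_{S,U}(\Lambda^k M)_{T,V} = \frac{\delta_{ST}\,\delta_{UV}}{\binom{d}{k}}
\]
in both cases, and the theorem follows by summing against the coefficients $c_{S,T,U,V}$.

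The main obstacle is the representation-theoretic input: irreducibility of $\Lambda^k\R^d$ under $A_d\cong S_{d+1}$ for every $k\le d$, where $A_d$ acts via the standard $d$-dimensional representation $V_{\mathrm{std}}$. For $O(d)$ this is classical. For $S_{d+1}$, the exterior powers $\Lambda^k V_{\mathrm{std}}$ are exactly the hook representations $V_{(d+1-k,\,1^k)}$, which are all irreducible; this can be verified via the Pieri rule or by a direct character computation using Murnaghan--Nakayama. The $k=1$ case recovers the familiar identity $\mathbb{E}_{P\in A_d} PBP^T = (\mathrm{tr}(B)/d)\,I$ coming from irreducibility of $V_{\mathrm{std}}$ itself, and the higher $k$ cases work identically using the corresponding hook representations.
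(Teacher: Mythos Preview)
Your argument is correct and takes a genuinely different route from the paper.

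\medskip

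\textbf{What the paper does.} The paper never computes moments. Instead it shows that $f(M)=\det(A+MBM^T)$ has the following low-degree property: when $M$ is rotated in a $2$-dimensional coordinate plane by $R_\theta$, the resulting function of $\theta$ has Fourier coefficients supported on $\{-2,-1,0,1,2\}$ (this is a direct consequence of multilinearity of the determinant). Hence averaging over the three rotations in $A_2\subset O(2)$ already agrees with the Haar average over $O(2)$. The paper then lifts this to $A_d\subset O(d)$ by showing that the copies of $O(2)$ attached to the $2$-faces of the simplex generate all of $O(d)$, and concludes via the elementary ``invariance implies quadrature'' lemma. No representation theory is used; the only structural inputs are the low-degree Fourier fact and the generation lemma.

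\medskip

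\textbf{What you do.} You expand $\det(A+MBM^T)$ via Laplace/Cauchy--Binet into a combination of products $(\Lambda^k M)_{S,U}(\Lambda^k M)_{T,V}$, and then identify the second-moment tensor $\mathbb{E}[\Lambda^k M\otimes\Lambda^k M]$ using Schur's lemma: bi-invariance forces it to live on the one-dimensional invariant subspace of $(\Lambda^k\R^d)^{\otimes 2}$, and orthogonality of $\Lambda^k M$ pins down the scalar as $1/\binom{d}{k}$, independently of whether the average is over $A_d$ or $O(d)$. The representation-theoretic input is that $\Lambda^k V_{\mathrm{std}}$ is absolutely irreducible for $S_{d+1}$ (the hook Specht modules) and for $O(d)$.

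\medskip

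\textbf{Comparison.} Your proof is cleaner and more conceptual: it pinpoints exactly which property of $A_d$ is being used, namely that every exterior power of the defining representation remains irreducible, and it immediately generalizes the theorem to any finite $H\subset O(d)$ with this property. The paper's proof is more elementary in that it avoids any black-box representation theory and instead isolates a concrete degree bound (Lemma~\ref{lem:gt3}); this also makes it easier to adapt to the block-diagonal variants needed in Section~\ref{sec:quadbip}, where one averages over $P\oplus S$ rather than a single $P$. Both proofs ultimately exploit the same multilinearity of the determinant, but they package it differently: you via the Cauchy--Binet reduction to second moments of $\Lambda^k$, the paper via a Fourier degree count along one-parameter subgroups.
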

}
\iffull{It is easy to see that the theorem will follow if we can show that the left hand
  side of \eqref{eqn:mainquad} is invariant under right multiplication of $P$ by
  orthogonal matrices.
}{}
\begin{lemma}[Invariance Implies Quadrature]\label{lem:invarquad}
Let $f$ be a function from $O (d)$ to $\R$ and
  let $H$ be a finite subgroup of $O (d)$. If
\begin{equation}\label{eqn:rotinvar}
  \expec{P \in H}{f (P)} =
  \expec{P \in H}{f (P Q_0)} .
  \end{equation}
	for all $Q_0 \in O (d)$, then
  \begin{equation}\label{eqn:quad} \expec{P\in H}f(P)=\expec{Q\in
  O(d)}f(Q),\end{equation}
where $Q$ is chosen according to Haar measure and $P$ is uniform on $H$.
\end{lemma}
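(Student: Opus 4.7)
The plan is to exploit the stated invariance by averaging both sides of \eqref{eqn:rotinvar} against Haar measure on $O(d)$ and then swapping the order of expectations. Since $H$ is finite, this exchange is completely routine (it is just a finite sum pulled inside an integral), and the right-invariance of Haar measure will do the rest of the work.

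Concretely, I would start from the hypothesis $\E_{P\in H} f(P) = \E_{P\in H} f(PQ_0)$ and observe that the left-hand side is a constant (in $Q_0$). Integrating both sides against Haar measure over $Q_0 \in O(d)$ therefore leaves the left-hand side unchanged, giving
\[
  \E_{P\in H} f(P) \;=\; \E_{Q_0 \in O(d)} \E_{P \in H} f(PQ_0).
\]
Next I would interchange the two expectations on the right; since $H$ is a finite set this is just linearity of expectation applied to a finite sum of integrals. This yields
\[
  \E_{P\in H} f(P) \;=\; \E_{P\in H} \E_{Q_0 \in O(d)} f(PQ_0).
\]

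Now the key step: for each fixed $P \in H \subseteq O(d)$, the map $Q_0 \mapsto PQ_0$ is a bijection of $O(d)$ onto itself that preserves Haar measure (this is precisely the left-invariance of Haar measure on the group $O(d)$). Hence for every fixed $P$,
\[
  \E_{Q_0 \in O(d)} f(PQ_0) \;=\; \E_{Q \in O(d)} f(Q),
\]
and the inner expectation is independent of $P$. Substituting this back gives
\[
  \E_{P \in H} f(P) \;=\; \E_{P \in H}\,\E_{Q \in O(d)} f(Q) \;=\; \E_{Q \in O(d)} f(Q),
\]
which is \eqref{eqn:quad}. There is no real obstacle here; the only subtlety is the direction of invariance being used, namely that right-multiplication on the argument of $f$ corresponds to left-translation of the Haar-distributed variable, which is exactly the property the Haar measure is built to provide.
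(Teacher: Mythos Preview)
Your proof is correct and is essentially identical to the paper's: both integrate the invariance hypothesis against Haar measure, swap the finite average over $H$ with the Haar integral, and then use left-invariance of Haar measure to collapse the inner expectation. The paper presents the same chain of equalities slightly more tersely (and with the steps read in the opposite order), but there is no substantive difference.
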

\iffull{\begin{proof} 
\begin{align*}
	\expec{Q\in O(d)}{f(Q)} &= \expec{Q\in O(d)}{}\expec{P\in H}{f(PQ)}
	= \expec{P\in H}{}\expec{Q\in O(d)}{f(PQ)}
	\\&= \expec{P\in H}{}\expec{Q\in O(d)}{f(P)}
	= \expec{P\in H}{f(P)},
\end{align*}
as desired.
\end{proof}

We will prove Theorem \ref{thm:mainquad} by showing that $f(P)=\det(A+PBP^T)$
  satisfies \eqref{eqn:rotinvar}.
We will achieve this by demonstrating that $f$ is invariant under certain elementary
  orthogonal transformations acting on 3-faces of the regular simplex, which
  generate all orthogonal transformations.
Let us fix some notation to precisely describe these elementary transformations.

Given three vertices $\hat{e_i},\hat{e_j},\hat{e_k}$ of the
  regular simplex, let $A_{i,j,k}$ denote the subgroup of $A_d$
  consisting of permutations of $\hat{e_i},\hat{e_j},\hat{e_k}$ which
  leave all of the other vertices fixed.
  Let  $O_{i,j,k}$ denote the subgroup of $O(d)$ acting on the
  two dimensional linear subspace parallel to the affine subspace through the three
  vertices, and leaving the orthogonal subspace fixed.
  Note that $A_{i,j,k}$ is a subgroup of $O_{i,j,k}$, and that these groups are
  isomorphic to $A_2$ and $O(2)$, respectively.

  The heart of the proof lies in the following lemma, which implies by Lemma
  \ref{lem:invarquad} that the
  polynomials we are interested in are not able to distinguish between the uniform
  distributions on $A_{2}$ and $O(2)$.
The reason for this is that these polynomials have very low degree (at most
  two) in the entries of any orthogonal matrix $Q$ acting on a two-dimensional
  subspace, a fact which is essentially a consequence of the multilinearity of the
  determinant.
The argument below is similar to the proof of Lemma 2.7 in \cite{mssfinite}.
}{}
\begin{lemma}[Invariance for $A_2$]\label{lem:a2invar} If $A$ and $B$ are symmetric $d\times d$ matrices, then
	for every $Q_0\in O(2)$,
\begin{equation}\label{eqn:a2invar}
 \expec{P\in A_2}{\det(A+(P\oplus I_{d-2})B(P\oplus
	I_{d-2})^T)}=
	\expec{P\in A_2}{\det(A+(PQ_0\oplus I_{d-2})B(PQ_0\oplus
	I_{d-2})^T)}.
\end{equation}
\end{lemma}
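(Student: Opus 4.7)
The plan is to show that $g(Q_0) := \expec{P \in A_2} f(PQ_0)$ with $f(Q) := \det(A + (Q\oplus I_{d-2})B(Q\oplus I_{d-2})^T)$ is independent of $Q_0 \in O(2)$, via three steps: a Schur reduction to a $2\times 2$ determinant, a sharp trigonometric degree bound of $2$ on each component of $O(2)$, and exact $3$-point cubature.

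First, assuming (WLOG by perturbation) that $W := A_{22}+B_{22}$ is invertible, the block determinant formula gives $f(Q) = \det(W)\cdot \det N(Q)$ with
\[
N(Q) = \tilde X + Q\tilde Y Q^T + \tilde Z Q^T + Q\tilde Z^T,
\]
where $\tilde X := A_{11}-A_{12}W^{-1}A_{21}$ and $\tilde Y := B_{11}-B_{12}W^{-1}B_{21}$ are symmetric $2\times 2$ matrices and $\tilde Z := -A_{12}W^{-1}B_{21}$; thus $N(Q)$ is symmetric.

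Second, and this is the technical heart, I show that $\det N(R_\phi)$ is a trigonometric polynomial of degree at most $2$ in $\phi$. Apply the $2\times 2$ identity
\[
\det N = \bigl(\tfrac{1}{2}\operatorname{Tr} N\bigr)^2 - \tfrac{1}{2}\|N_{\mathrm{tl}}\|_F^2,
\]
where $N_{\mathrm{tl}}$ denotes the traceless part. Since $\operatorname{Tr}(Q\tilde Y Q^T) = \operatorname{Tr} \tilde Y$ for orthogonal $Q$, the trace $\operatorname{Tr} N(Q) = \operatorname{Tr} \tilde X + \operatorname{Tr} \tilde Y + 2\operatorname{Tr}(\tilde Z Q^T)$ is linear in $Q$, so its square has trig degree $\leq 2$. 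For the norm-squared term, identify the $2$-dimensional space of traceless symmetric $2\times 2$ matrices with $\mathbb{C}$ via $\bigl(\begin{smallmatrix}a & b\\ b & -a\end{smallmatrix}\bigr)\leftrightarrow a+ib$; under this identification, conjugation by $R_\phi$ becomes multiplication by $e^{2i\phi}$ and $\|\cdot\|_F^2 = 2|\cdot|^2$. A direct computation shows that the complex image $n(R_\phi)$ of $N_{\mathrm{tl}}(R_\phi)$ takes the form
\[
n(R_\phi) = n_0 + \alpha e^{i\phi} + y e^{2i\phi}
\]
for complex constants $n_0, \alpha, y$ arising from $\tilde X, \tilde Z, \tilde Y$ respectively. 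The decisive feature is that $n(R_\phi)$ has support only on non-negative frequencies $\{0,1,2\}$; consequently $|n|^2 = n\bar n$ supports only frequencies in $\{-2,\ldots,2\}$. This establishes the degree bound on $SO(2)$; the reflection component of $O(2)$ is handled identically after substituting $\tilde Y \mapsto J\tilde Y J$ and $\tilde Z \mapsto \tilde Z J$ with $J$ a fixed reflection.

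Third, for any trigonometric polynomial $h$ of degree $\leq 2$ on $S^1$, the average $\tfrac{1}{3}\sum_{k=0}^{2}h(\phi_0 + \tfrac{2\pi k}{3})$ equals the constant Fourier coefficient of $h$, independently of $\phi_0$. For any $Q_0 \in O(2)$, the orbit $\{PQ_0 : P \in A_2\}$ splits into three rotations and three reflections, equally spaced by $2\pi/3$ within each component; averaging $f$ over this orbit therefore yields $\tfrac{1}{2}(c_{\mathrm{rot}} + c_{\mathrm{ref}})$, the mean of the constant Fourier coefficients of $f$ on the two components, manifestly independent of $Q_0$. The main obstacle is the sharp degree bound in the second step: a naive Leibniz expansion of $\det N$ only gives trig degree $\leq 4$, and $A_2$-averaging alone does not kill the spin-$3$ representation of $O(2)$ (since $\rho_3|_{A_2}$ contains the trivial representation); the decisive cancellation to degree $\leq 2$ comes from the one-sided frequency structure of $n(R_\phi)$, which reflects how traceless symmetric $2\times 2$ matrices transform as pure spin $+2$ (not $\pm 2$) under $SO(2)$-conjugation.
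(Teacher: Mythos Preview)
Your proof is correct, and it reaches the same destination as the paper by a genuinely different path.

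Both arguments ultimately establish the same key fact---that $f(R_\theta)=\det(A+(R_\theta\oplus I_{d-2})B(R_\theta\oplus I_{d-2})^T)$ is a trigonometric polynomial of degree at most $2$ in $\theta$ on each component of $O(2)$---and then finish identically by $3$-point cubature over $Z_3\subset A_2$, together with the coset decomposition $A_2=Z_3\cdot F$ to handle reflections. The difference lies entirely in how the degree bound is obtained. The paper (Lemma~\ref{lem:gt3}) works directly at the $d\times d$ level: it diagonalizes $R_\theta=U\operatorname{diag}(e^{i\theta},e^{-i\theta})U^\dagger$ over $\mathbb{C}$, rewrites the determinant as $\det(V^\dagger AVD+DV^\dagger BV)$, and observes that $e^{i\theta}$ appears only in the first row and column while $e^{-i\theta}$ appears only in the second; multilinearity of the determinant then gives degree $\le 2$ immediately. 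Your route instead Schur-complements down to a $2\times 2$ determinant $\det N(Q)$, splits $\det N=(\tfrac12\operatorname{Tr} N)^2-\tfrac12\|N_{\mathrm{tl}}\|_F^2$, and exploits the representation-theoretic fact that the traceless symmetric part of $N(R_\phi)$, viewed as a complex number, carries frequencies of a single sign (here $\{0,1,2\}$), so that $|n|^2$ cannot exceed degree $2$. The paper's argument is shorter and sidesteps the perturbation needed to make $W=A_{22}+B_{22}$ invertible; yours makes the underlying $SO(2)$-representation theory explicit and gives a transparent reason why the naive degree-$4$ count collapses to $2$.
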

\iffull{\begin{proof}
Let $SO(2)$ be the subgroup of $O(2)$ consisting of rotation matrices
	$$R_{\theta}=\begin{bmatrix} \cos\theta & \sin\theta \\ -\sin\theta &
	\cos\theta\end{bmatrix},$$
and let $Z_3$ be the subgroup of $A_2$ consisting of the three rotations
$R_\tau, \tau\in T:=\{0,2\pi/3,4\pi/3\}.$
We begin by showing that
\begin{equation}\label{eqn:z3quad} 
	\expec{P\in Z_3}{\det(A+(P\oplus I)B(P\oplus I)^T)}=
	 \expec{P\in Z_3}{\det(A+(PR_\theta\oplus I)B(PR_\theta\oplus I)^T)},\end{equation}
for every $\theta$, where $I$ is the $(d-2$)-dimensional
	 identity.
Since the elements of $Z_3$ are themselves rotations, we can rewrite thrice the right hand
side of \eqref{eqn:z3quad} as
\begin{align*} 
	&\sum_{\tau\in T}\det(A+(R_\tau R_\theta \oplus I)B(R_\tau R_\theta
	\oplus I)^T) 
	\\&= \sum_{\tau\in T}\det(A+(R_{\tau+\theta}\oplus
	I)B(R_{\tau+\theta}\oplus I)^T)
	\\&= \sum_{\tau\in T}\sum_{k=-2}^2
	c_ke^{ik(\tau+\theta)}\quad\textrm{for some coefficients $c_k$, by Lemma \ref{lem:gt3}}
	\\&= \sum_{k=-2}^2 c_ke^{ik\theta}\left(e^{ik0}+e^{ik 2\pi/3}+e^{ik 4\pi/3}\right)
	\\&= 3 c_0 \quad\textrm{since the terms with $|k|=1,2$ vanish}.
\end{align*}
As this quantity is independent of $\theta$, we can assume $\theta=0$,
which gives the left hand side of \eqref{eqn:z3quad}.

To finish the proof, we observe that
\begin{align*}
\expec{P\in A_2}{\det(A+(P\oplus I_{d-2})B(P\oplus I_{d-2})^T)}
&=\expec{D \in F} \expec{P\in Z_3}{\det(A+(PD\oplus I)B(PD\oplus I)^T)}
\\&=\expec{D \in F} \expec{P\in Z_3}{\det(A+(P\oplus I)(D\oplus I)B(D\oplus
I)^T(P\oplus I)^T)},
\end{align*}
where $F$ consists of the identity and the reflection across the horizontal axis:
$$F:=\left\{\begin{bmatrix} 1 & 0\\ 0 & 1\end{bmatrix}, \begin{bmatrix} 1 & 0 \\ 0 &
-1\end{bmatrix}\right\},$$
and $D$ is chosen uniformly from $F$.

Thus, the left hand side of \eqref{eqn:a2invar} is invariant under conjugation of $B$ with the matrices
  $D\oplus I, D\in F$.
Since every $Q_0\in O(2)$ can be written a $R_\theta D$ for some $D\in F$, and we
have already established invariance under $R_\theta\oplus I$ in
  \eqref{eqn:z3quad}, the lemma is proved.

\end{proof}
}{}
\iffull{}{The key structural property of determinants that we exploit is the
  following.}
\begin{lemma}[Determinants are Low Degree in Rank 2 Rotations]\label{lem:gt3}
Let $A, B$ be $d \times d$ matrices and let
\[
\mydet{ A  + \rstt B\rstt^T} = \sum_k c_k e^{i k\theta}.
\]
Then $c_k = 0$ for $|k| \geq 3$.
\end{lemma}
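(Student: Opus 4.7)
The plan is to diagonalize $R := R_\theta \oplus I_{d-2}$ over $\mathbb{C}$, use a unitary change of basis to turn the $\theta$-dependence into a product of simple phase factors, and then read off the Fourier support from a Leibniz expansion. The conceptual point is that although $R_\theta$ is real, its natural eigendecomposition is complex, and in the complex basis the ``rank $2$ rotation'' becomes ``two $\theta$-dependent diagonal entries.''

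Concretely, let $u = \frac{1}{\sqrt{2}}(1, -i, 0, \ldots, 0)^T$, so $\bar u = \frac{1}{\sqrt{2}}(1, i, 0, \ldots, 0)^T$, and set $V := [u, \bar u, e_3, \ldots, e_d]$. Then $V$ is unitary and $R = V D V^*$ with $D = \mathrm{diag}(e^{-i\theta}, e^{i\theta}, 1, \ldots, 1)$; because $R$ is real orthogonal, $R^T = R^{-1} = V \bar D V^*$. Consequently $RBR^T = V D C \bar D V^*$ with $C := V^* B V$ independent of $\theta$, and because $|\det V|^2 = 1$ one obtains
\[
\mydet{A + \rstt B \rstt^T} = \mydet{A' + D C \bar D},
\]
where $A' := V^* A V$. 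The payoff is that each entry of the new matrix is extremely structured: $(A' + D C \bar D)_{ij} = A'_{ij} + e^{i(\phi_i - \phi_j)} C_{ij}$, where $\phi := (-\theta, \theta, 0, \ldots, 0)$ has only two nonzero coordinates.

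Expanding the determinant by the Leibniz formula and distributing each entry into its two summands yields a double sum over permutations $\sigma \in S_d$ and subsets $T \subseteq [d]$ (encoding for which factors the $C$-piece is chosen); each summand carries the phase $e^{i \psi(T,\sigma)}$ with
\[
\psi(T,\sigma) \;=\; \sum_{i\in T}(\phi_i - \phi_{\sigma(i)}) \;=\; \sum_{i\in T}\phi_i \;-\; \sum_{j \in \sigma(T)}\phi_j.
\]
Since $\phi$ is supported only at $\{1,2\}$ with values $\mp\theta$, each of the two sums on the right lies in $\{-\theta, 0, \theta\}$, forcing $\psi(T,\sigma) \in \{-2\theta, -\theta, 0, \theta, 2\theta\}$. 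This is exactly the claim that $c_k = 0$ for $|k| \geq 3$. I do not foresee a real technical obstacle; the only subtle point is the appearance of $\bar D$ (rather than $D$) in $R^T = V \bar D V^*$, which is precisely where the reality of $R_\theta$ gets used.
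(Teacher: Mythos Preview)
Your proof is correct and follows essentially the same approach as the paper: diagonalize $R_\theta\oplus I_{d-2}$ over $\mathbb{C}$ via a unitary $V$, conjugate to make the $\theta$-dependence diagonal, and then count phase contributions in the Leibniz expansion. The only cosmetic difference is that the paper first multiplies through by $R$ (using $\det(R)=1$ and $R^T=R^{-1}$) to obtain $\det(A+RBR^T)=\det(V^\dagger A V D + D V^\dagger B V)$, which makes each entry \emph{linear} in $e^{\pm i\theta}$ and lets one argue via ``at most one entry from each row and column''; you instead keep the form $\det(A'+DC\bar D)$ and bound the net phase $\sum_{i\in T}\phi_i-\sum_{j\in\sigma(T)}\phi_j$ directly, which is an equally clean route to the same conclusion.
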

\iffull{\begin{proof}
Recall that all $2\times 2$ rotations may be diagonalized as
$$ R_{\theta}=\begin{bmatrix} \cos\theta &\sin\theta \\ -\sin\theta &
\cos\theta\end{bmatrix}=\ U \begin{bmatrix}e^{i\theta} & 0 \\ 0 & e^{-i\theta}\end{bmatrix}U^\dagger,$$
where 
$$ U=\frac{1}{\sqrt{2}}\begin{bmatrix} 1 & 1 \\ i & -i\end{bmatrix}$$
is independent of $\theta$. This implies that $\rstt =VDV^\dagger$ for diagonal $D$ containing $e^{i\theta}$ and $e^{-i\theta}$
in the upper right $2\times 2$ block and ones elsewhere, with $V$ independent of $\theta$. Thus, we see that
\begin{align*}
\mydet{A  + \rstt B\rstt^T} &= \mydet{ A\rstt + \rstt B} 
	\\&= \mydet{ AVDV^\dagger +VDV^\dagger B}
	\\&= \mydet{ V^\dagger AVD +DV^\dagger B V}
\end{align*}
Notice that the matrix $M=V^\dagger AVD+DV^\dagger BV$ depends linearly on $e^{i\theta},
e^{-i\theta}$, and that the $e^{i\theta}$ (resp. $e^{-i\theta}$) terms appear
only in the first (resp. second) row and column of $M$, respectively. Since each monomial in the expansion of the 
determinant contains at most one entry from each row and each column and
$e^{i\theta}\cdot e^{-i\theta}=1$, this
implies that no terms of degree higher than two in $e^{i\theta}$ or
$e^{-i\theta}$ appear.
\end{proof}
}{}
\iffull{\begin{corollary}[Invariance for $A_{i,j,k}$]\label{cor:aijkinvar} For every $i$, $j$ and $k$,
	$$ \expec{P\in A_{i,j,k}} \det(A+PBP^T) = \expec{Q\in O_{i,j,k}}
		\det(A+QBQ^T).$$
\end{corollary}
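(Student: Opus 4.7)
The plan is to combine the invariance statement of Lemma \ref{lem:a2invar} with the invariance-implies-quadrature principle of Lemma \ref{lem:invarquad}. The first step is to observe that the proof of Lemma \ref{lem:invarquad} uses only the right-invariance of the Haar measure on the ambient compact group, so the same argument applies verbatim with $O(d)$ replaced by any compact subgroup containing the finite group $H$; in particular we take $H = A_{i,j,k}$ and the ambient group to be $O_{i,j,k}$. Setting $f(P) = \det(A + P B P^T)$, this reduces the corollary to verifying the invariance
$$\expec{P \in A_{i,j,k}}{f(P)} = \expec{P \in A_{i,j,k}}{f(P Q_0)} \quad \text{for all } Q_0 \in O_{i,j,k}.$$

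The second step is to reduce this invariance to Lemma \ref{lem:a2invar} by an orthogonal change of basis. I would pick $U \in O(d)$ whose conjugation action sends the two-dimensional subspace on which $O_{i,j,k}$ acts to $\spn{e_1, e_2}$; then $U O_{i,j,k} U^T = O(2) \oplus I_{d-2}$, and $U A_{i,j,k} U^T$ is a copy of the order-six dihedral group sitting inside this block. By further composing $U$ with a suitable rotation in the $e_1, e_2$ plane (which leaves $O(2) \oplus I_{d-2}$ invariant), I may arrange that $U A_{i,j,k} U^T$ coincides with the standard copy of $A_2$ appearing in Lemma \ref{lem:a2invar}. Since $\det$ is invariant under conjugation, substituting $A' = U A U^T$ and $B' = U B U^T$ (both still symmetric) transforms the required invariance into exactly the statement \eqref{eqn:a2invar} applied to $A', B'$, which is therefore known.

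Assembling the two steps yields
$$\expec{P \in A_{i,j,k}}{\det(A + P B P^T)} = \expec{Q \in O_{i,j,k}}{\det(A + Q B Q^T)},$$
as required. No substantive obstacle arises: this corollary is essentially a packaging step that lifts the ``first two coordinates'' invariance of Lemma \ref{lem:a2invar} to an arbitrary $2$-face of the simplex via an orthogonal change of variables, and combines it with the obvious generalization of Lemma \ref{lem:invarquad} to the ambient compact group $O_{i,j,k}$. The only point that requires any care is ensuring, via the extra rotation used in constructing $U$, that the dihedral group $U A_{i,j,k} U^T$ sits in $O(2) \oplus I_{d-2}$ in precisely the standard position assumed by Lemma \ref{lem:a2invar}.
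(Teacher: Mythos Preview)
Your proposal is correct and follows essentially the same approach as the paper's own proof. Both arguments reduce to Lemma~\ref{lem:a2invar} via an orthogonal change of basis $V$ (the paper specifies ``with any one vertex mapped to a multiple of $e_1$,'' which plays the same role as your extra rotation aligning $U A_{i,j,k} U^T$ with the standard copy of $A_2$), and both implicitly use the generalization of Lemma~\ref{lem:invarquad} to the ambient compact group $O(2)$ (equivalently $O_{i,j,k}$); the only cosmetic difference is that the paper changes basis first and applies quadrature in the standardized coordinates, whereas you apply quadrature directly in $O_{i,j,k}$ and verify its invariance hypothesis by changing basis.
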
}
{
\begin{corollary}[Invariance for $A_{i,j,k}$]\label{cor:aijkinvar} For all $i$, $j$ and $k$,
	$$ \expec{P\in A_{i,j,k}} \det(A+PBP^T) = \expec{Q\in O_{i,j,k}}
	\det(A+QBQ^T)$$
\end{corollary}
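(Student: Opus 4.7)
The plan is to reduce Corollary \ref{cor:aijkinvar} to the special case handled by Lemma \ref{lem:a2invar}, and then use Lemma \ref{lem:invarquad} to upgrade invariance under right-multiplication into the desired equality of expectations. The first step is the geometric observation that by construction both $A_{i,j,k}$ and $O_{i,j,k}$ fix the orthogonal complement of the two-dimensional subspace $V$ parallel to the affine span of $\hat{e_i}, \hat{e_j}, \hat{e_k}$, and act on $V$ as copies of $A_2$ and $O(2)$ respectively. So I will pick an orthogonal matrix $U$ sending $V$ onto $\mathrm{span}(e_1,e_2)$; conjugation by $U$ then identifies $A_{i,j,k}$ with $\{P \oplus I_{d-2} : P \in A_2\}$ and $O_{i,j,k}$ with $\{Q \oplus I_{d-2} : Q \in O(2)\}$.

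Second, since the determinant is invariant under orthogonal conjugation, replacing $A$ and $B$ by $A' = UAU^T$ and $B' = UBU^T$ leaves both sides of the corollary unchanged. Thus it suffices to establish
$$\expec{P\in A_2}{\det(A' + (P\oplus I_{d-2})B'(P\oplus I_{d-2})^T)} = \expec{Q\in O(2)}{\det(A' + (Q\oplus I_{d-2})B'(Q\oplus I_{d-2})^T)},$$
and I will define $f : O(2) \to \mathbb{R}$ by $f(Q) = \det(A' + (Q\oplus I_{d-2})B'(Q\oplus I_{d-2})^T)$.

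Third, Lemma \ref{lem:a2invar} applied to $A'$ and $B'$ says exactly that
$$\expec{P\in A_2}{f(P)} = \expec{P\in A_2}{f(PQ_0)}$$
for every $Q_0 \in O(2)$. This is precisely the hypothesis of Lemma \ref{lem:invarquad} with ambient dimension $2$ and finite subgroup $H = A_2 \subset O(2)$, whose conclusion $\expec{P\in A_2}{f(P)} = \expec{Q\in O(2)}{f(Q)}$ is the desired identity.

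I do not expect a genuine obstacle here, since the corollary amounts to little more than a change-of-basis reformulation of Lemma \ref{lem:a2invar} composed with Lemma \ref{lem:invarquad}. The only point requiring care is the verification that a single orthogonal $U$ simultaneously conjugates $A_{i,j,k}$ onto $A_2 \oplus I_{d-2}$ and $O_{i,j,k}$ onto $O(2) \oplus I_{d-2}$; this follows from the fact that both groups are defined by their action on the same two-dimensional subspace $V$, so any orthogonal map diagonalizing this block structure handles both groups at once.
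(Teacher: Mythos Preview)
Your proposal is correct and follows essentially the same route as the paper: an orthogonal change of basis identifying $A_{i,j,k}$ with $A_2\oplus I_{d-2}$ and $O_{i,j,k}$ with $O(2)\oplus I_{d-2}$, followed by Lemma~\ref{lem:a2invar} and Lemma~\ref{lem:invarquad}. The only refinement in the paper's version is that it explicitly requires the orthogonal map to send one of the three simplex vertices to a multiple of $e_1$, which is what guarantees that the conjugated copy of $A_{i,j,k}$ is exactly $A_2\oplus I_{d-2}$ (rather than an $O(2)$-conjugate of it); you should include this alignment condition when you specify $U$.
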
}
\iffull{\begin{proof}
Let $V$ be the orthogonal transformation that maps the affine subspace spanned
    by the vertices $\hat{e}_i, \hat{e}_j, \hat{e}_k$ to the first two
    coordinates of $\R^2$, with any one vertex mapped to a multiple of
    $e_1$.
Conjugation by $V$ maps $A_{i,j,k}$ to $A_2\oplus I_{d-2}$ and
    $O_{i,j,k}$ to $O(2)\oplus I_{d-2}$, abusing notation slightly in the
    natural way. 
Since the determinant is invariant under change of basis, Lemma \ref{lem:a2invar} tells us that
\begin{align*}
	\expec{P\in A_{i,j,k}}\det(A+PBP^T) &= \expec{P_2\in A_2}\det(VAV^T+(P_2\oplus I)VBV^T(P_2\oplus I)^T) 
	\\&= \expec{Q_2\in O(2)}\det(VAV^T + (Q_2\oplus I)VBV^T(Q_2\oplus I)^T)
	\\&= \expec{Q\in O_{i,j,k}}\det(A+QBQ^T),
\end{align*}
as desired.
\end{proof}
}{}
\begin{lemma}[$O_{i,j,k}$ generate $O(d)$]\label{lem:generate}
Given a regular simplex in $\R^d$, the union over $i$, $j$, and $k$ of $O_{i,j,k}$ generates $O (d)$.
In particular, every matrix in $O (d)$ may be written as a product of a finite number of these matrices.
\end{lemma}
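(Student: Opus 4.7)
The plan is to reduce to the classical fact that rotations in a rich enough family of 2-planes generate the full orthogonal group, and then show that the 2-planes parallel to the triangular faces of a regular simplex form such a family. Let $G = \langle \bigcup_{i,j,k} O_{i,j,k}\rangle \subseteq O(d)$; the goal is to prove $G = O(d)$.

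The key sub-claim I would establish is: if $W \subseteq \R^d$ is a subspace with $O(W) \subseteq G$ (where $O(W)$ sits inside $O(d)$ by acting trivially on $W^\perp$) and $P$ is a 2-plane with $\dim{W \cap P} \geq 1$, then $O(W+P) \subseteq G$. The base case, where $W$ is itself a 2-plane, reduces to the classical fact that two rotation subgroups of $O(3)$ whose 2-planes share a line (equivalently, rotations about two distinct 1-dim axes) generate $SO(3)$; combined with any orientation-reversing element from one of the $O(P_i)$ this yields $O(3)$. For the inductive step, choose orthonormal coordinates adapted to $W$ and $P$ so that $W = \spn{e_1,\ldots,e_k}$ and $P = \spn{e_k, e_{k+1}}$; the commutator identity $[E_{ik} - E_{ki},\, E_{k,k+1} - E_{k+1,k}] = E_{i,k+1} - E_{k+1,i}$ for each $i < k$ shows that $\mathfrak{so}(W) + \mathfrak{so}(P)$ bracket-generates $\mathfrak{so}(W+P)$, which combined with one orientation-reversing element promotes to $O(W+P) \subseteq G$.

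With the sub-claim in hand, the lemma follows by induction along a nested chain of subspaces adapted to the simplex. Fix a vertex $v_0$ and set $u_k = v_k - v_0$; the $u_k$ span $\R^d$ since the simplex is full-dimensional. Start with $W_2 = V_{0,1,2} = \spn{u_1, u_2}$, so that $O(W_2) = O_{0,1,2} \subseteq G$. For each $k \geq 2$, the 2-plane $V_{0,1,k+1} = \spn{u_1, u_{k+1}}$ intersects $W_k = \spn{u_1,\ldots, u_k}$ in the line $\spn{u_1}$, so the sub-claim yields $O(W_{k+1}) \subseteq G$. Taking $k = d$ gives $W_d = \R^d$ and hence $O(d) \subseteq G$.

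The main obstacle will be promoting the Lie-algebraic generation (bracket-generation of $\mathfrak{so}(W+P)$ by $\mathfrak{so}(W) \cup \mathfrak{so}(P)$) to the stronger finite-product statement claimed in the lemma. This is not automatic for arbitrary 1-parameter subgroups, but is justified here because each $O_{i,j,k}$ is a full closed subgroup isomorphic to $O(2)$ (rather than a single element or a single 1-parameter subgroup): surjectivity of the exponential map on each compact connected Lie factor, combined with a Chow/Kuranishi-type lifting via BCH, realizes every element of $O(W+P)$ as a finite product from our generating subgroups. An alternative, more elementary route is to invoke the Cartan--Dieudonn\'{e} theorem after observing that each reflection in $O_{i,j,k}$ is in fact a hyperplane reflection in $\R^d$ (it fixes $V_{ijk}^\perp$ together with a 1-dim line in $V_{ijk}$, giving a $(d-1)$-dim fixed subspace), and that the inductive construction furnishes enough such hyperplane reflections to generate $O(d)$ directly.
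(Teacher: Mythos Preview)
Your approach is correct in outline and genuinely different from the paper's. One small omission: your sub-claim as stated needs the additional hypothesis $O(P)\subseteq G$ (otherwise there is nothing forcing $G$ to act outside $W$); you clearly intend this, since you use $\mathfrak{so}(P)$ in the bracket computation and take $P=V_{0,1,k+1}$ in the application, but it should be said.

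The paper proceeds along the same chain of subspaces $E_h=\spn{\hat e_1-\hat e_0,\ldots,\hat e_h-\hat e_0}$ but the inductive step is entirely elementary and constructive rather than Lie-theoretic. Given an arbitrary orthonormal frame $w_1,\ldots,w_h$ of $E_h$, the paper first finds a unit vector $p$ in the face plane through $\hat e_{h-2},\hat e_{h-1},\hat e_h$ with $p^T\hat e_h=w_h^T\hat e_h$ (this exists by the intermediate value theorem on that circle), uses the inductive hypothesis $O(E_{h-1})\subseteq\Gamma_{h-1}$ to rotate $w_h$ to $p$ while fixing $\hat e_h$, then uses $O_{h-2,h-1,h}$ to send $p$ to $\hat e_h$, and finally cleans up the remaining $h-1$ vectors inside $E_{h-1}$ by induction. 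This yields an explicit finite factorization without invoking bracket generation, BCH, or any lifting argument, which is exactly the ``main obstacle'' you identify in your own approach. Your route is cleaner to state and reusable (the sub-claim is a nice stand-alone lemma), but it imports nontrivial Lie theory to certify the finite-product statement; the paper's route is longer to write down but stays within linear algebra and continuity, in keeping with the self-contained flavor of the section. Your Cartan--Dieudonn\'e remark is correct that the reflections in $O_{i,j,k}$ are hyperplane reflections, but note that their normals sweep out only the finite union $\bigcup V_{ijk}$, so this does not replace the induction---it is really the same chain argument in different clothing.
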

\iffull{
\begin{proof}
Let $\Gamma_h $ be the subgroup of $O(d)$ generated by $\bigcup_{i,j,k}
O_{i,j,k\le h}$.
Let $\hat{e}_{0}, \dots , \hat{e}_{d}$ be the vertices of the regular simplex.
For $1 \leq h \leq d$, let $E_{h}$ be the linear subspace parallel to the affine subspace through
  $\hat{e}_{0}, \dots , \hat{e}_{h}$.
We will prove by induction on $h$ that $\Gamma_h$ contains the action of the orthogonal group
  on $E_{h}$.
The base case is $h = 2$, for which $O_{0,1,2}$ is precisely the action of the orthogonal
  group on $E_{2}$.

Assuming that we have proved this result for $h-1$, we now prove it for $h$.
To this end, let $u_{h} = \hat{e}_{h}$, and let $u_{1}, \dots , u_{h-1}$ be
  arbitrary orthonormal vectors in $E_{h}$ that are orthogonal to $u_{h}$.
We will prove that for every orthonormal basis $w_{1}, \dots , w_{h}$ of $E_{h}$,
  there is a $Q \in \Gamma_{h}$ such that $Q w_{i} = u_{i}$ for $1 \leq i \leq h$.

We first consider the case in which $w_{h}^{T} \hat{e}_{h} \geq 0$.
Let $F_h$ denote the 2-dimensional affine subspace spanned by
$\{\hat{e}_{h}, \hat{e}_{h-1}, \hat{e}_{h-2}\}$, and 
observe that there must be a unit vector $p \in E_{h}\cap F_h$ 
  with $p^{T} \hat{e}_{h} = w_{h}^{T} \hat{e}_{h}$.
This follows because the intersection of $F_h$
with the unit sphere in $E_h$ is a circle containing
$\{\hat{e}_h,\hat{e}_{h-1},\hat{e}_{h-2}\}$, $p\mapsto p^T\hat{e}_h$ is a continuous
  function, and we have $\hat{e}_h^T\hat{e}_h=1$ and $\hat{e}_{h-1}^{T} \hat{e}_{h} = \hat{e}_{h-2}^{T} \hat{e}_{h} < 0$.
As $\hat{e}_{h}$ is orthogonal to $E_{h-1}$ and $\hat{e}_{h}$ is invariant under $\Gamma_{h-1}$,
  the induction hypothesis implies that there must be a $T \in \Gamma_{h-1}$ so that $T w_{h} = p$.
Moreover, there is an element $T_2$ of $O_{h-2,h-1,h}$ that maps $p$ to $\hat{e}_{h}$.
So, their composition $W=T_2T$ sends $w_{h}$ to $\hat{e}_{h}$.
Since $W$ is orthogonal, it must send $w_{1}, \dots , w_{h-1}$ to $E_{h-1}$, and so by induction may be composed with a map
in $\Gamma_{h-1}$ that sends $Ww_1,\ldots,Ww_{h-1}$ to $u_{1}, \dots , u_{h-1}$ without moving $\hat{e}_{h}$.
The resulting map is the desired $Q$.

In the case that $w_{h}^{T} \hat{e}_{h} < 0$, we begin by applying a map in $\Gamma_{h}$ that sends
  $w_{h}$ to a vector that is orthogonal to $\hat{e}_{h}$ so that we can then apply the previous argument.
For example, we can do this by defining $p$ to be one of the two unit vectors in
$F_h$ with $p^{T} \hat{e}_{h} = - w_{h}^{T} \hat{e}_{h}$.
We then apply a map in $\Gamma_{h-1}$ that sends $w_{h}$ to $-p$, and then a map in
  $O_{h-2,h-1,h}$ that maps $p$, and thus also $-p$, to a vector orthogonal to $\hat{e}_{h}$.
\end{proof}}{}
\begin{theorem}[Invariance for $A_d$]\label{thm:odinvar}
Let $A$ and $B$ be $d\times d$ matrices, and let
\[
  f_{A,B} (Q) = \mydet{A + Q B Q^{T}}.
\]
Then, for all $Q_{0} \in O (d)$,
\[
\expec{P \in A_{d}}{f_{A,B} (P)} = 
\expec{P \in A_{d}}{f_{A,B} (P Q_{0})}.
\]
\end{theorem}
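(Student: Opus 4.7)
The plan is to reduce, via Lemma \ref{lem:generate}, to invariance under the elementary subgroups $O_{i,j,k}$, and then to prove this case by a coset decomposition that invokes Corollary \ref{cor:aijkinvar} twice. I would first note that the collection of $Q_0 \in O(d)$ for which the identity $\expec{P \in A_d} f_{A,B}(P Q_0) = \expec{P \in A_d} f_{A,B}(P)$ holds for every pair of symmetric matrices $A, B$ is closed under multiplication: if it holds for $Q_1$ and $Q_2$, then applying it with $Q_2$ to the matrix pair $(A, Q_1 B Q_1^T)$ and unfolding the definition of $f$ yields the identity for $Q_2 Q_1$. Combined with Lemma \ref{lem:generate}, it therefore suffices to establish the invariance for each $Q_0 \in O_{i,j,k}$.

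Next I would fix such $i, j, k$ and $Q_0 \in O_{i,j,k}$, and partition $A_d$ into left cosets of $A_{i,j,k}$:
\[
\expec{P \in A_d} f_{A,B}(P Q_0) = \expec{\pi} \expec{h \in A_{i,j,k}} \det(A + \pi h Q_0 B Q_0^T h^T \pi^T).
\]
For each coset representative $\pi$, the inner average equals $\expec{h \in A_{i,j,k}} f_{A', B'}(h)$ with $A' = \pi^T A \pi$ and $B' = Q_0 B Q_0^T$. Applying Corollary \ref{cor:aijkinvar} converts this to $\expec{\tau \in O_{i,j,k}} f_{A', B'}(\tau)$, which by definition of $B'$ equals $\expec{\tau \in O_{i,j,k}} f_{A', B}(\tau Q_0)$. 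Since $Q_0 \in O_{i,j,k}$, right-invariance of the Haar measure on the compact group $O_{i,j,k}$ lets me replace $\tau Q_0$ by $\tau$, yielding $\expec{\tau \in O_{i,j,k}} f_{A', B}(\tau)$. A second application of Corollary \ref{cor:aijkinvar}, in reverse, brings this back to $\expec{h \in A_{i,j,k}} f_{A, B}(\pi h)$, and summing over $\pi$ recovers $\expec{P \in A_d} f_{A,B}(P)$.

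The one conceptually nontrivial step is this round-trip through Corollary \ref{cor:aijkinvar}: one cannot directly absorb $Q_0 \in O_{i,j,k}$ into a permutation $h \in A_{i,j,k}$, since $h Q_0$ is typically not a permutation. The proof must therefore first expand the discrete average over $A_{i,j,k}$ to a continuous one over $O_{i,j,k}$ (where Haar invariance allows $Q_0$ to be absorbed) and then contract back. Once this mechanism is isolated, the remainder of the argument is routine coset bookkeeping together with the change of variables $\det(A + \pi X \pi^T) = \det(\pi^T A \pi + X)$ inside the determinant.
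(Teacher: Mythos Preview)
Your proof is correct and follows essentially the same approach as the paper: both reduce via Lemma~\ref{lem:generate} to invariance under each $O_{i,j,k}$, and both establish that case by decomposing the average over $A_d$ into an average over $A_{i,j,k}$ (your explicit coset decomposition is the paper's ``$\expec{P}\expec{P_2}$'') and then passing through Corollary~\ref{cor:aijkinvar} to $O_{i,j,k}$, absorbing $Q_0$ by Haar invariance, and returning. The only difference is presentational---you spell out the round-trip through the corollary and the closure-under-products step more explicitly than the paper does.
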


\iffull{\begin{proof}
We will use the fact that
\[
	\expec{P \in A_{n}}{f_{A,B} (P )} = 
	\expec{P \in A_{d}}{ \expec{P_{2} \in A_{i,j,k}}{f_{A,B} (P P_{2})}}
	= \expec{P\in A_d}\expec{P_2\in A_{i,j,k}}{f_{P^TAP,B}(P_2)}.
\]
Applying Corollary~\ref{cor:aijkinvar} reveals that for every $Q_{2} \in O_{i,j,k}$,
\begin{align*}
	\expec{P\in A_d}\expec{P_2\in A_{i,j,k}}{f_{P^TAP,B}(P_2)}.
	&= \expec{P\in A_d}\expec{P_2\in A_{i,j,k}}{f_{P^TAP,B}(P_2Q_2)}.
	\\&= \expec{P\in A_d}\expec{P_2\in A_{i,j,k}}{f_{A,B}(PP_2Q_2)}.
	\\&= \expec{P\in A_d}{f_{A,B}(PQ_2)}.
\end{align*}
Thus, we conclude that 
$$ \expec{P\in A_d}f_{A,B}(P) = \expec{P\in A_d}f_{A,B}(PQ_2)$$
  for every $Q_2\in O_{i,j,k}$, for every $i,j,k$.

Let $Q_{0} \in O (d)$.
By Lemma \ref{lem:generate}, there is a sequence of matrices
  $Q_{1}, \dotsc , Q_{m}$, each of which is in $O_{i,j,k}$ for some $i$, $j$ and $k$, so that
\[
  Q_{0} = Q_{1} Q_{2} \dotsb Q_{m}.
\]
By applying the previous equality $m$ times, we obtain
\[
	\expec{P \in A_{d}} {f (P Q_{0})}
=
  \expec{P \in A_{d}} {f (P Q_{1} \dotsb Q_{m})}
= 
  \expec{P \in A_{d}} {f (P)}.
\]
\end{proof}

\begin{proof}[Proof of Theorem~\ref{thm:mainquad}]
Follows from Theorem~\ref{thm:odinvar} and Lemma~\ref{lem:invarquad}.
\end{proof}
\begin{proof}[Proof of Theorem \ref{thm:symquadconv}] Follows from Theorem
	\ref{thm:mainquad}, \eqref{eqn:blockdiag}, and
\eqref{eqn:dividehat}.\end{proof}
We conclude the section by recording the obvious extension of Theorem
  \ref{thm:symquadconv} to sums of $m$ matrices.
\begin{corollary}\label{cor:msum} Let $A_1,\ldots, A_m$ be symmetric $d \times d$ matrices  with $A_i\onevec =
	a_i \onevec$ and $\charp{A_i}{x}=(x-a_i)p_i(x)$. Then,
\begin{equation}\label{eqn:msum} 
	\expec{P_1,\ldots, P_m} \charp{\sum_{i=1}^m P_i A_i P_i^T}{x} =
	\left(x-\sum_{i=1}^m a_i\right)p_1(x)\sqsum \ldots\sqsum p_m(x),
\end{equation}
where $P_{1}, \dots , P_{m}$ are independent uniformly random permutation matrices.
\end{corollary}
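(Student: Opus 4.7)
The plan is to induct on $m$, using Theorem \ref{thm:symquadconv} as the single-permutation engine and the bilinearity/associativity of $\sqsum_{d-1}$ to pull expectations through. The base case $m=1$ is immediate, since $P_1 A_1 P_1^T$ has the same characteristic polynomial as $A_1$, namely $(x-a_1)p_1(x)$.

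For the inductive step I would condition on $P_1,\ldots,P_{m-1}$ and set $B := \sum_{i=1}^{m-1} P_i A_i P_i^T$. The key structural observation is that $B$ is symmetric and has $\onevec$ as an eigenvector with eigenvalue $\sum_{i=1}^{m-1} a_i$: each $A_i$ sends $\onevec$ to $a_i\onevec$, and each $P_i$ fixes $\onevec$. Hence we may write $\charp{B}{x} = \bigl(x-\sum_{i=1}^{m-1} a_i\bigr)\,q_B(x)$ for some polynomial $q_B$ of degree $d-1$. Applying Theorem \ref{thm:symquadconv} to the pair $(B, A_m)$, conditionally on the earlier permutations, yields
\[
  \expec{P_m} \charp{B + P_m A_m P_m^T}{x} = \left(x - \sum_{i=1}^m a_i\right) q_B(x) \sqsum_{d-1} p_m(x).
\]

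Next I would take the expectation over $P_1,\ldots,P_{m-1}$ on both sides. Because $\sqsum_{d-1}$ is bilinear (in particular linear in its first argument), the expectation commutes with the convolution against the fixed polynomial $p_m(x)$, giving
\[
  \expec{P_1,\ldots,P_m} \charp{\sum_{i=1}^m P_i A_i P_i^T}{x}
  = \left(x - \sum_{i=1}^m a_i\right) \bigl(\expec{P_1,\ldots,P_{m-1}} q_B(x)\bigr) \sqsum_{d-1} p_m(x).
\]
The inductive hypothesis, applied to $A_1,\ldots,A_{m-1}$, computes $\expec{} \charp{B}{x}$ as $\bigl(x - \sum_{i=1}^{m-1} a_i\bigr)(p_1 \sqsum \cdots \sqsum p_{m-1})(x)$; dividing off the common linear factor (valid since the identity holds as polynomials) gives $\expec{} q_B = p_1 \sqsum \cdots \sqsum p_{m-1}$. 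Substituting this in and invoking associativity of $\sqsum_{d-1}$ produces the desired formula.

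I do not expect any serious obstacle: the only subtleties are keeping track of the shared trivial eigenvector through the induction (so that the decomposition $\charp{B}{x} = (x - \sum a_i)q_B(x)$ is justified pointwise in $P_1,\ldots,P_{m-1}$) and using bilinearity rather than mere linearity of $\sqsum_{d-1}$ to commute the expectation past the convolution. Both are taken care of by the observation that the invariance of $\onevec$ is preserved under sums and conjugation by permutations, and by the properties of $\sqsum_{d-1}$ already established.
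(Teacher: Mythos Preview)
Your proof is correct and follows essentially the same inductive strategy as the paper. The only cosmetic difference is packaging: the paper first block-diagonalizes all the $A_i$ and $P_i$ simultaneously (so the trivial $\onevec$-eigenvalue is stripped off once at the outset) and then applies the Quadrature Theorem~\ref{thm:mainquad} $(m-1)$ times to replace each $\hat{P}_i$ by a Haar-random $Q_i\in O(d-1)$, finally invoking \eqref{eqn:msumid}; you instead invoke the already-packaged Theorem~\ref{thm:symquadconv} at each inductive step and lean on bilinearity and associativity of $\sqsum_{d-1}$ to pass the expectation through. Both arguments condition on all but one permutation and use independence in the same way, so they are equivalent.
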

\begin{proof} Apply a change of basis so that each $A_i=\hat{A_i}\oplus a_i$,
	divide out the $(x-\sum_{i=1}^m a_i)$ term as in \eqref{eqn:dividehat},
	and apply Theorem \ref{thm:mainquad} inductively $(m-1)$ times,
	replacing each $\hat{P_i}$ with a random orthogonal $Q_i$ (this requires conditioning on
	the other $\hat{P_j}$ and $Q_j$, but by independence the distribution of each $\hat{P_i}$ is
	still uniform on $A_d$). Finally, appeal to the identity
	\eqref{eqn:msumid} to write this as an $m-$wise additive convolution.
\end{proof}
}{}

\subsection{Quadrature for Bipartite Matrices}\label{sec:quadbip}

\iffull{}{We include a proof of the analogous result for bipartite matrices in the full version of the paper.
}
\begin{theorem}\label{thm:bipquad}
	Suppose $A$ and $B$ are (not necessarily symmetric)
	$d\times d$ matrices such that $A\onevec = A^T\onevec= a\onevec$ and
	$B\onevec =B^T\onevec =b\onevec$. Let $\charp{AA^T}{x}=(x-a^2)p(x)$ and
	$\charp{BB^T}{x}=(x-b^2)q(x)$. Then,
\begin{align}\label{eqn:bipartitemain} 
		\expec{P,S} \charp{\dil{A}+(P\oplus S)\dil{B}(P\oplus
	S)^T}{{x}} 
  & = \subsquare \left((x-(a+b)^2)p(x)\recsum q(x)\right)
\\
  & = (x^{2}-(a+b)^2) \subsquare \left(p(x)\recsum q(x)\right),
\end{align}
	where $P$ and $S$ are independent uniform random permutation matrices.
\end{theorem}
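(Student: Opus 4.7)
The plan is to mimic the proof of the symmetric quadrature result (Theorem~\ref{thm:symquadconv}) in the dilated setting. I would first use a change of basis to split off the trivial eigenspaces of $\dil{A}$ and $\dil{B}$, and then invoke a bipartite analog of the main quadrature identity (Theorem~\ref{thm:mainquad}), now applied in dimension $2(d-1)$.

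Concretely, fix an orthogonal $U \in O(d)$ with $U\onevec = \sqrt{d}\,e_d$; then $UAU^T = \hat{A} \oplus a$, $UBU^T = \hat{B} \oplus b$, and a uniform permutation $P$ becomes $UPU^T = \hat{P} \oplus 1$ with $\hat{P}$ uniform on the simplex group $A_{d-1}$ (and similarly for $S$). Applying $U \oplus U$ to $\dil{A} + (P\oplus S)\dil{B}(P\oplus S)^T$ and reordering so that the two trivial coordinates come last yields the block form
\[
\bigl(\dil{\hat{A}} + (\hat{P} \oplus \hat{S})\dil{\hat{B}}(\hat{P} \oplus \hat{S})^T\bigr) \;\oplus\; \Gamma_{a+b},
\qquad \text{where } \Gamma_c := \begin{bmatrix} 0 & c \\ c & 0 \end{bmatrix}.
\]
Since $\Gamma_{a+b}$ has characteristic polynomial $x^2 - (a+b)^2$, and since $p(x) = \charp{\hat{A}\hat{A}^T}{x}$ and $q(x) = \charp{\hat{B}\hat{B}^T}{x}$, the identity \eqref{eqn:asymdil} in dimension $d-1$ reduces the theorem to the bipartite quadrature statement
\begin{align*}
&\expec{\hat{P}, \hat{S} \in A_{d-1}} \charp{\dil{\hat{A}} + (\hat{P}\oplus\hat{S})\dil{\hat{B}}(\hat{P}\oplus\hat{S})^T}{x}
\\&\qquad= \expec{Q, R \in O(d-1)} \charp{\dil{\hat{A}} + (Q\oplus R)\dil{\hat{B}}(Q\oplus R)^T}{x}.
\end{align*}

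To establish this, I would apply Lemma~\ref{lem:invarquad} twice, once in the $\hat{P}$ variable (with $\hat{S}$ fixed) and once in the $\hat{S}$ variable (with the orthogonal $Q$ from the first step fixed). Using the identity $(P \oplus S)\dil{\hat{B}}(P \oplus S)^T = \dil{P \hat{B} S^T}$, the first reduction amounts to showing that $\expec{P \in A_{d-1}} \det(\dil{\hat{A}} + (P \oplus I)\dil{C}(P\oplus I)^T)$ is invariant under $P \mapsto PQ_0$ for every $Q_0 \in O(d-1)$ and every $(d-1)\times(d-1)$ matrix $C$; the symmetric statement in $\hat{S}$ is analogous. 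Following Theorem~\ref{thm:odinvar}, Lemma~\ref{lem:generate} lets us restrict to $Q_0 \in O_{i,j,k}$, and after a change of basis this reduces to the bipartite analog of Lemma~\ref{lem:a2invar}. The crucial observation is that $(R_\theta \oplus I_{d-3}) \oplus I_{d-1}$, viewed in the ambient $(2d-2)$-dimensional dilation space, is simply a rank-$2$ rotation embedded in an orthogonal matrix; so Lemma~\ref{lem:gt3} applies verbatim and forces the relevant determinant to have Fourier degree at most $2$ in $\theta$. Averaging over $Z_3$ and the reflection set $F$ exactly as in Lemma~\ref{lem:a2invar} then extends invariance to all of $O(2)$, completing the reduction.

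The main obstacle is keeping track of the block and bipartite structure cleanly; once the identity $(P\oplus S)\dil{\hat{B}}(P\oplus S)^T = \dil{P\hat{B}S^T}$ is in hand and one recognizes that a rotation acting on only one side of the bipartition is still a rank-$2$ rotation in the full dilation space, the low-degree machinery of Lemma~\ref{lem:gt3} transfers unchanged and no genuinely new ideas are needed beyond those used for the symmetric quadrature theorem.
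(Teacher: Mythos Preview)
Your proposal is correct and follows essentially the same route as the paper: split off the trivial $\onevec$-eigenspaces via a change of basis to factor out $x^2-(a+b)^2$, reduce to a bipartite quadrature identity comparing $A_{d-1}\times A_{d-1}$ to $O(d-1)\times O(d-1)$, and prove the latter by handling the $\hat{P}$ and $\hat{S}$ averages one at a time (the paper packages the one-sided step as Corollary~\ref{cor:oplusquad} and the two-step combination as Theorem~\ref{thm:dilquad}). Your key observation---that $(R_\theta\oplus I_{d-3})\oplus I_{d-1}$ is still a rank-two rotation in the ambient dilation space, so Lemma~\ref{lem:gt3} and hence Lemma~\ref{lem:a2invar} apply verbatim---is exactly why the paper can say the proof of Corollary~\ref{cor:oplusquad} is ``identical'' to that of Theorem~\ref{thm:mainquad}.
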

\iffull{As in the nonbipartite case, we begin by applying a change of basis $V$ that
  isolates the common all ones eigenvector and block diagonalizes our matrices
  as:
  \begin{equation}\label{eqn:bipblockdiag} VAV^T = \hat{A}\oplus a, VBV^T = \hat{B}\oplus b, VPV^T = \hat{P}\oplus 1,
  VSV^T = \hat{S}\oplus 1.\end{equation}
Conjugating the left hand side of \eqref{eqn:bipartitemain} by $(V\oplus V)$, we
see that it is the same as
\begin{align}\label{eqn:bipdividehat}
	\nonumber &\expec{P,S} \charp{\dil{(\hat{A}\oplus a)}+
	((\hat{P}\oplus 1)\oplus(\hat{S}\oplus 1))\dil{(\hat{B}\oplus
	b)}((\hat{P}\oplus 1)\oplus (\hat{S}\oplus 1))^T}{{x}}\\
	&=\nonumber\expec{P,S} \charp{\dil{(\hat{A}+\hat{P}\hat{B}\hat{S}^T\oplus (a+b))}}{{x}} 
	\\\nonumber &=\expec{P,S} \subsquare \charp{(\hat{A}+\hat{P}\hat{B}\hat{S}^T\oplus (a+b)) (\hat{A}+\hat{P}\hat{B}\hat{S}^T\oplus (a+b))^T }{x} 
	\\\nonumber &=(x^{2}-(a+b)^2)\expec{P,S} \subsquare \charp{(\hat{A}+\hat{P}\hat{B}\hat{S}^T) (\hat{A}+\hat{P}\hat{B}\hat{S}^T)^T }{x} 
	\\&=(x^{2}-(a+b)^2)\expec{P,S}
	\charp{\dil{\hat{A}}+(\hat{P}\oplus\hat{S})\dil{\hat{B}}(\hat{P}\oplus\hat{S})^T}{{x}}.
\end{align}
As in the previous section, the matrices $\hat{P}$ and $\hat{S}$ are random elements of
  the group $A_{d-1}$.
Observe that
$$ p(x)=\charp{\hat{A}\hat{A}^T}{x},\quad\text{and}\quad
q(x)=\charp{\hat{B}\hat{B}^T}{x}.$$
Recalling from \eqref{eqn:asymdil} that 
\begin{align*}
	\subsquare\left(p(x)\recsum q(x)\right) 
	&= \E_{Q,R\in O(d-1)}\charp{\dil{A}+(Q\oplus R)\dil{B}(Q\oplus R)^T}{x}
\end{align*}
and removing all the $\hat{\cdot}$s as before to ease notation, 
  we see that the conclusion \eqref{eqn:bipartitemain} of Theorem \ref{thm:bipquad} is
  implied by the following more general quadrature statement.
\begin{theorem}\label{thm:dilquad} 
For all symmetric $2d\times 2d$ matrices $C$ and $D$:
	\begin{equation}	
		\expec{P,S\in A_d}\charp{C+(P\oplus S)D(P\oplus S)^T}{{x}} = 
		\expec{Q,R\in O(d)}\charp{C+(Q\oplus R)D(Q\oplus R)^T}{{x}}.
	\end{equation}
\end{theorem}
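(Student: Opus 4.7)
My plan is to reduce Theorem \ref{thm:dilquad} to two successive applications of the symmetric quadrature principle developed for Theorem \ref{thm:mainquad}. The key observation is that $P \oplus I_d$ and $I_d \oplus S$ commute as elements of $O(2d)$, so
\begin{equation*}
(P \oplus S) D (P \oplus S)^T = (P \oplus I_d) \bigl[(I_d \oplus S) D (I_d \oplus S)^T\bigr] (P \oplus I_d)^T.
\end{equation*}
Conditioning on $S$ and applying Fubini, it suffices to prove the following one-sided quadrature statement: for all symmetric $2d \times 2d$ matrices $A$ and $B$,
\begin{equation}\label{eqn:onesidedquad}
\expec{P \in A_d} \mydet{A + (P \oplus I_d) B (P \oplus I_d)^T} = \expec{Q \in O(d)} \mydet{A + (Q \oplus I_d) B (Q \oplus I_d)^T}.
\end{equation}
Applying \eqref{eqn:onesidedquad} once with $S$ fixed and $B := (I_d \oplus S) D (I_d \oplus S)^T$, and then the symmetric analog (with roles of the two blocks swapped) to average out $S$ while $Q$ is fixed, yields the theorem.

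To prove \eqref{eqn:onesidedquad}, I mimic the three-step template used for Theorem \ref{thm:mainquad}. By Lemma \ref{lem:invarquad} applied to $f(Q) := \mydet{A + (Q \oplus I_d) B (Q \oplus I_d)^T}$ on $O(d)$ with finite subgroup $A_d$, it suffices to show $\expec{P \in A_d} f(P) = \expec{P \in A_d} f(P Q_0)$ for every $Q_0 \in O(d)$. By Lemma \ref{lem:generate}, the subgroups $O_{i,j,k}$ generate $O(d)$, so it suffices to verify invariance under each $Q_0 \in O_{i,j,k}$; after a change of basis sending the corresponding three vertices of the simplex to the first two coordinates of $\R^d$, this reduces, exactly as in Corollary \ref{cor:aijkinvar}, to establishing the analog of Lemma \ref{lem:a2invar}: invariance of $\expec{P \in A_2} \mydet{A + ((P \oplus I_{d-2}) \oplus I_d) B ((P \oplus I_{d-2}) \oplus I_d)^T}$ under right-multiplication of $P$ by any $Q_0 \in O(2)$.

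The only genuinely new technical ingredient is the analog of Lemma \ref{lem:gt3} in this block-diagonal setting: for symmetric $2d \times 2d$ matrices $A, B$,
\begin{equation*}
\mydet{A + ((R_\theta \oplus I_{d-2}) \oplus I_d) B ((R_\theta \oplus I_{d-2}) \oplus I_d)^T} = \sum_k c_k e^{ik\theta}, \quad \text{with } c_k = 0 \text{ for } |k| \geq 3.
\end{equation*}
The proof is identical in spirit to Lemma \ref{lem:gt3}: diagonalizing the $2 \times 2$ rotation block as $U \operatorname{diag}(e^{i\theta}, e^{-i\theta}) U^\dagger$ and padding with identity blocks, only two rows and two columns of the transformed matrix carry $\theta$-dependence, which caps the Fourier degree of the determinant at $2$. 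With this in hand, the Fourier-cancellation computation of Lemma \ref{lem:a2invar} goes through verbatim: the sum over $\tau \in \{0, 2\pi/3, 4\pi/3\}$ annihilates all $|k| \in \{1, 2\}$ terms, leaving only $c_0$, which is $\theta$-independent. Averaging over the reflection $F$ upgrades this to invariance under all of $O(2)$, and Lemma \ref{lem:generate} then promotes this to invariance under all of $O(d)$, completing \eqref{eqn:onesidedquad}.

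I expect no real obstacle beyond careful bookkeeping: structurally the bipartite case is a direct repetition of the symmetric case with the embedding $P \mapsto P \oplus I_d$ inserted throughout. The one point that warrants attention is confirming that the low-Fourier-degree property survives this embedding, which it does for the same multilinearity reason as in the original Lemma \ref{lem:gt3}; the extra $I_d$ block contributes no $\theta$-dependent rows or columns and so cannot raise the degree.
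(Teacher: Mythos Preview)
Your proposal is correct and follows essentially the same approach as the paper. The paper isolates your one-sided statement \eqref{eqn:onesidedquad} as a separate corollary (Corollary~\ref{cor:oplusquad}), proves it by exactly the argument you describe---noting that no genuinely new low-degree lemma is required since $(R_\theta \oplus I_{d-2}) \oplus I_d = R_\theta \oplus I_{2d-2}$, so Lemma~\ref{lem:gt3} applies verbatim with $d$ replaced by $2d$---and then applies it twice to average over $P$ and $S$ in turn, just as you outline.
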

This theorem is an immediate consequence of two applications of the following
  corollary of Theorem \ref{thm:mainquad} from the previous section.
  \begin{corollary}\label{cor:oplusquad} If $C$ and $D$ are symmetric $2d\times 2d$ matrices, 
	$$ \expec{P\in A_d} \det(C+(P\oplus I)D(P\oplus I)^T) 
	= \expec{Q\in O(d)} \det(C+(Q\oplus I)D(Q\oplus I)^T).$$
\end{corollary}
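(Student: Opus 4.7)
The plan is to mimic the proof of Theorem \ref{thm:mainquad} line-for-line, exploiting the observation that the passive $\oplus I_d$ block in the embedding $Q \mapsto Q \oplus I_d$ does not interfere with any step of that argument. Concretely, I would define
\[ f(Q) = \det(C + (Q \oplus I_d) D (Q \oplus I_d)^T), \quad Q \in O(d), \]
and aim to show $\expec{P \in A_d} f(P) = \expec{Q \in O(d)} f(Q)$. By Lemma \ref{lem:invarquad} applied with $H = A_d$, this reduces to proving the invariance $\expec{P \in A_d} f(P Q_0) = \expec{P \in A_d} f(P)$ for every $Q_0 \in O(d)$.

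Following the strategy of Theorem \ref{thm:odinvar}, I would establish this invariance by first verifying it for $Q_0 \in O_{i,j,k}$ and then using Lemma \ref{lem:generate} to write an arbitrary $Q_0 \in O(d)$ as a product of such elements. The invariance for a fixed $O_{i,j,k}$ is in turn reduced, via the analog of Corollary \ref{cor:aijkinvar} and a change of basis on the first $d$ coordinates, to proving that
\[ \expec{P \in A_2} f\bigl((P R_\theta) \oplus I_{d-2}\bigr) \]
is independent of $\theta \in \R$, where $R_\theta$ is a $2 \times 2$ rotation.

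The crucial structural input is Lemma \ref{lem:gt3}. Since $(R_\theta \oplus I_{d-2}) \oplus I_d = R_\theta \oplus I_{2d-2}$, applying Lemma \ref{lem:gt3} directly to the $2d \times 2d$ matrices $C$ and $D$ shows that $\theta \mapsto \det(C + (R_\theta \oplus I_{2d-2}) D (R_\theta \oplus I_{2d-2})^T)$ is a trigonometric polynomial with Fourier support in $\{-2, -1, 0, 1, 2\}$. From this point the argument of Lemma \ref{lem:a2invar} carries over verbatim: averaging over the order-three cyclic subgroup $Z_3 \subset A_2$ annihilates all nonzero Fourier modes, and adjoining the horizontal-reflection subgroup $F$ extends the invariance from rotations to all of $O(2)$, since every element of $O(2)$ factors as $R_\theta D$ with $D \in F$.

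I do not anticipate a serious obstacle. The only thing one must check is that every ingredient of the original proof tolerates the trivial passive block $I_d$: this is immediate for Lemma \ref{lem:gt3}, which is already stated for matrices of arbitrary size and any rank-two rotation; for Lemma \ref{lem:generate}, which is a purely geometric fact about $O(d)$; and for Lemma \ref{lem:invarquad}, which is a generic statement about group averages. The one piece of bookkeeping is that the embedding $Q \mapsto Q \oplus I_d$ carries a rank-two rotation of $\R^d$ acting on two coordinates to a rank-two rotation of $\R^{2d}$ acting on the same two coordinates, so the Fourier-degree bound from Lemma \ref{lem:gt3} applies without change.
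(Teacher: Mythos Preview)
Your proposal is correct and follows exactly the same route as the paper: define $f_{C,D}(Q)=\det(C+(Q\oplus I)D(Q\oplus I)^T)$, reduce via Lemma~\ref{lem:invarquad} to showing invariance of the $A_d$-average under right multiplication by each $Q_2\in O_{i,j,k}$, and observe that because $(P\oplus I_{d-2})\oplus I_d = P\oplus I_{2d-2}$ the proof of Corollary~\ref{cor:aijkinvar} (hence Lemma~\ref{lem:a2invar} and Lemma~\ref{lem:gt3}) applies verbatim to the $2d\times 2d$ matrices $C,D$. The paper's own proof says precisely this, in fewer words.
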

\begin{proof} The proof is identical to the proof of Theorem \ref{thm:mainquad},
	except we replace $P\in A_d$ with $P\oplus I$ and $Q\in O(d)$ with
	$Q\oplus I$ at each step.

	Specifically, let
	$$ f_{C,D}(Q):=\det(C+(Q\oplus I)D(Q\oplus I)^T).$$
	Applying Corollary \ref{cor:aijkinvar} as before reveals that for every
	$i,j,k$ and every $Q_2\in O_{i,j,k}$,
	$$ \expec{P\in A_d} f_{C,D}(P) = \expec{P\in A_d}\expec{P_2\in
	A_{i,j,k}}f_{C,D}(PP_2) = \expec{P\in A_d}\expec{P_2\in
	A_{i,j,k}}f_{C,D}(PP_2Q_2) = \expec{P\in A_d}f_{C,D}(PQ_2).$$
Since an arbitrary $Q_0\in O(d)$ is a
	finite product of such $Q_2$ by Lemma \ref{lem:generate}, this means
	that
	$$ \expec{P\in A_d}f_{C,D}(PQ_0)=\expec{P\in A_d}f_{C,D}(P)$$
	for all $Q_0\in O(d)$. Lemma \ref{lem:invarquad} completes the proof.
\end{proof}

\begin{proof}[Proof of Theorem \ref{thm:dilquad}]
	Since $P$ and $S$ are independent, we have
	\begin{align*}
		&\expec{P,S\in A_d}\charp{C+(P\oplus S)D(P\oplus S)^T}{{x}}
		\\&= \expec{S\in A_d}\expec{P\in A_d}\det({x}I+C+(P\oplus
		I)(I\oplus S)D(I\oplus S)^T(P\oplus I)^T)
		\\&= \expec{S\in A_d}\expec{Q\in O(d)} \det({x}I+C+(Q\oplus
		I) (I\oplus S)D(I\oplus S)^T (Q\oplus I)^T)\quad\textrm{by
		Corollary \ref{cor:oplusquad}}
		\\&= \expec{Q\in O(d)} \expec{S\in A_d}\det({x}I+(Q\oplus
		I)^TC(Q\oplus I)+(I\oplus S)D(I\oplus S)^T)
		\\&= \expec{Q\in O(d)} \expec{R\in O(d)}\det({x}I+(Q\oplus
		I)^TC(Q\oplus I)+(I\oplus R)D(I\oplus R)^T)\quad\textrm{by
		Corollary \ref{cor:oplusquad}}
		\\&= \expec{Q,R\in O(d)} \det({x}I+C+(Q\oplus R)D(Q\oplus
		R)^T),
	\end{align*}
as desired.
\end{proof}

\begin{proof}[Proof of Theorem \ref{thm:bipquad}] Follows from Theorem
	\ref{thm:dilquad}, \eqref{eqn:bipblockdiag}, and
	\eqref{eqn:bipdividehat}.
\end{proof}

As before, Theorem 4.10 extends effortlessly to the case of many matrices.
\begin{corollary}\label{cor:bipquad}
If $A_1,\ldots,A_m$ are matrices with $A_i\onevec =A_i^T\onevec = a_i$
	and $\charp{A_iA_i^T}{x}=(x-a_i^2)p_i(x)$, then
	\begin{align*}
		&\expec{P_1,\ldots, P_m, S_1,\ldots, S_m} \charp{\sum_{i=1}^m (P_i\oplus S_i)\dil{A_i}(P_i\oplus S_i)^T}{{x}} 
		\\&= \left(x^{2}-\left(\sum_i a_i\right)^2\right) \subsquare\left[
	p_1(x)\recsum\ldots\recsum p_m(x)\right],
	\end{align*}
where the $P_i$ and $S_i$ are independent uniformly random permutations.
\end{corollary}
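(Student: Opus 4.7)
The plan is to mirror exactly the proof of Corollary \ref{cor:msum}, but use the bipartite quadrature theorem (Theorem \ref{thm:dilquad}) in place of Theorem \ref{thm:mainquad} at each step.

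First, I would apply the block-diagonalizing change of basis $V$ simultaneously to every $A_i$, $P_i$, and $S_i$, as in \eqref{eqn:bipblockdiag}, so that $VA_iV^T=\hat{A_i}\oplus a_i$, $VP_iV^T=\hat{P_i}\oplus 1$, and $VS_iV^T=\hat{S_i}\oplus 1$, where each $\hat{P_i},\hat{S_i}$ is uniform on $A_{d-1}$ and all of them are mutually independent. Conjugating the large determinant inside the expectation by $V\oplus V$ leaves it unchanged, and exactly the computation in \eqref{eqn:bipdividehat} lets me strip off the trivial factor $\bigl(x^2-(\sum_i a_i)^2\bigr)$ coming from the all-ones eigenvector summed across the $m$ matrices. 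What remains is
\[
\expec{\hat{P_1},\hat{S_1},\ldots,\hat{P_m},\hat{S_m}}\charp{\sum_{i=1}^m(\hat{P_i}\oplus\hat{S_i})\dil{\hat{A_i}}(\hat{P_i}\oplus\hat{S_i})^T}{x},
\]
with the expectation taken over independent uniform elements of $A_{d-1}$.

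Next, I would replace the $(\hat{P_i},\hat{S_i})$ pairs by independent Haar-random $(Q_i,R_i)\in O(d-1)\times O(d-1)$ one index at a time. To replace the $i$th pair, condition on all the other $\hat{P_j},\hat{S_j}$ (and any $Q_j,R_j$ already substituted); what is inside the characteristic polynomial then has the form $C+(\hat{P_i}\oplus\hat{S_i})D(\hat{P_i}\oplus\hat{S_i})^T$ for a pair of fixed symmetric $2(d-1)\times 2(d-1)$ matrices $C$ and $D$. Theorem \ref{thm:dilquad} applied to this restricted expectation says the answer is unchanged if $(\hat{P_i},\hat{S_i})$ is replaced by $(Q_i,R_i)$. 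By the independence of the $(\hat{P_j},\hat{S_j})$, the marginal distribution of every unchanged pair remains uniform on $A_{d-1}$ throughout this process. Iterating $m$ times converts the entire expectation into one over independent Haar-random $(Q_i,R_i)\in O(d-1)\times O(d-1)$.

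Finally, using the associativity and bilinearity of $\recsum$ (the $\recsum$ analogue of \eqref{eqn:msumid}, which follows from the associativity half of the lemma in Section \ref{sec:freeprelim} applied to the dilation formula \eqref{eqn:asymdil}), the resulting expectation is precisely $\subsquare[p_1(x)\recsum\cdots\recsum p_m(x)]$, where $p_i(x)=\charp{\hat{A_i}\hat{A_i}^T}{x}$. Multiplying back the trivial factor $x^2-(\sum_i a_i)^2$ that was stripped off earlier gives the stated identity. The only step that requires care is the conditioning argument in the inductive swap, where one must verify that the matrices playing the role of $C$ and $D$ in Theorem \ref{thm:dilquad} are genuinely fixed once we condition — but this is immediate since every other randomness source is independent of $(\hat{P_i},\hat{S_i})$.
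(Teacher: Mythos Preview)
Your proposal is correct and follows exactly the approach the paper intends: the authors explicitly write ``We omit the proof, which is identical to the proof of Corollary~\ref{cor:msum},'' and your block-diagonalize, strip the trivial factor, inductively swap each $(\hat{P_i},\hat{S_i})$ for Haar-random $(Q_i,R_i)$ via Theorem~\ref{thm:dilquad}, then invoke associativity/bilinearity of $\recsum$ is precisely that argument. The only cosmetic difference is that you iterate $m$ times whereas the paper's Corollary~\ref{cor:msum} proof iterates $m-1$ times (absorbing one conjugation by invariance of the determinant), but either count works.
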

We omit the proof, which is identical to the proof of Corollary \ref{cor:msum}.
}{}

\section{Ramanujan Graphs}\label{sec:bounds}
In this section, we combine the Cauchy transform, interlacing, and quadrature
  results of the previous sections to establish our main Theorems
  \ref{thm:bipartite} and \ref{thm:nonbipartite}

\begin{proof}[Proof of Theorem \ref{thm:nonbipartite}]
Let $M$ be the adjacency matrix of a fixed perfect matching on $d$ vertices, with $d$
  even.
Since the uniform distribution on permutations is realizable by swaps (Lemma
  \ref{lem:realizable}), Theorem \ref{thm:perminterlace} tells us that with nonzero probability: 
 	$$\lambda_2\left(\sum_{i=1}^d P_iMP_i^T\right)\le
	\lambda_2\left(\E\charp{\sum_{i=1}^m P_i M P_i^T}{x}\right).$$

	\iffull{Corollary \ref{cor:msum} reveals}{Repeated applications of
Theorem \ref{thm:symquadconv} reveal} that 
  the polynomial in the right-hand expression
   may be written as an $m$-wise symmetric additive
	  convolution\footnote{We remark that this formula works for arbitrary regular
	 	 adjacency matrices $A_i$, as mentioned in the abstract.}
 $$ E(x):=\expec{P_1,\ldots,P_m}\charp{\sum_{i=1}^m P_i A_i P_i^T}{x}=
(x-m)p(x)\sqsum_{d-1} \ldots\sqsum_{d-1} p(x)\qquad\text{($m$ times)},$$
where
$$p(x)=\frac{\charp{x}{M}}{x-1}=(x-1)^{d/2-1}(x+1)^{d/2},$$
is the characteristic polynomial of a single matching with the trivial root at
  $1$ removed.
Our goal is therefore to bound the largest root of $p(x)\sqsum \ldots\sqsum
p(x)$,
  which is the second largest root of $E(x)$.

We will do this using the inverse Cauchy transform described in Section
  \ref{sec:cauchyintro}.
The Cauchy transform of $p(x)$ is given by
$$ \cauchy{p}{x}=\frac{d/2-1}{d-1}\frac{1}{x-1}+\frac{d/2}{d-1}\frac{1}{x+1}.$$
Notice that for every $x>1$, putting the trivial root at $1$ back only increases
the Cauchy transform:
\begin{equation}
\label{eqn:addone} \cauchy{p}{x} <
	\frac{d/2}{d}\frac{1}{x-1}+\frac{d/2}{d}\frac{1}{x+1}=\frac{x}{x^2-1} =
\cauchy{\chi(M)}{x}.\end{equation}
Since both functions are decreasing for $x>1$, this implies that the inverse
Cauchy transform of $p$ is upper bounded by that of $\chi(M)$:
$$\invcauchy{p}{w}<\invcauchy{\chi(M)}{w},$$
for every $w>0$.

Applying the convolution inequality in Theorem \ref{thm:sqsumTrans} $(m-1)$
times yields the following upper bound on the inverse Cauchy transform of the $m-$wise convolution of interest.
\begin{equation}\label{eqn:finalcalc}\invcauchy{p\sqsum\ldots\sqsum p}{w}\le m\cdot \invcauchy{p}{w}-\frac{m-1}{w}
< m\cdot \invcauchy{\chi(M)}{w}-\frac{m-1}{w}.\end{equation}
Recalling from \eqref{eqn:addone} that
$$ \invcauchy{\chi(M)}{w}=x\iff w=\frac{x}{x^2-1},$$
the right hand side of \eqref{eqn:finalcalc} may be written as
$$ mx-\frac{m-1}{w} =
mx-\frac{(m-1)(x^2-1)}{x}=\frac{x^2+(m-1)}{x},$$
which is easily seen to be minimized at $x=\sqrt{m-1}$ with value $2\sqrt{m-1}$.
Thus, the second largest root of $E (x)$ is at most $2 \sqrt{m-1}$.
\end{proof}

\iffull{\begin{proof}[Proof of Theorem \ref{thm:bipartite}]
Let 
$$M=\dil{I}$$
be the adjacency matrix of a perfect matching on $2d$ vertices, across the
  natural bipartition.
Then, for independent uniformly random $d\times d$ permutation matrices $P_1,\ldots,P_m,
S_1,\ldots,S_m$, the random matrix
$$ A = \sum_{i=1}^m (P_i\oplus S_i)M(P_i\oplus S_i)^T = \sum_{i=1}^m
\dil{(P_iS_i^T)}$$
  is the adjacency matrix of a union of $m$ random matchings across the same
  bipartition.
Since the distribution of the $(P_i\oplus S_i)$ is realizable by swaps (Lemma
  \ref{lem:realizable}), Theorem \ref{thm:perminterlace} implies that
$$ \lambda_2(A)\le \lambda_2\left(\expec{}\charp{\sum_{i=1}^m (P_i\oplus
S_i)M(P_i\oplus S_i)}{x}\right),$$
with nonzero probability. 
Since $I\onevec = \onevec$, Corollary \ref{cor:bipquad} implies that the
  polynomial on the right hand side is equal to 
  $$ (x^2-m^{2})\subsquare[ p(x)\recsum_{d-1}\ldots\recsum_{d-1} p(x)]\quad\text{($m$ times)},$$
where
$$p(x)=\charp{I_{d-1}I_{d-1}^T}{x} = (x-1)^{d-1}.$$
We upper bound the inverse Cauchy transform of this $m-$wise convolution using Theorem
\ref{thm:recsumTrans}:
$$ \invcauchy{\subsquare (p\recsum\ldots\recsum p)}{w}\le
m\cdot\invcauchy{\subsquare p}{w}-\frac{m-1}{w}= m\cdot\invcauchy{(x^2-1)^{d-1}}{w}-\frac{m-1}{w}.$$
Since
$$\cauchy{(x^2-1)^{d-1}}{w} = \frac{x}{x^2-1},$$
this is now identical to the calculation \eqref{eqn:finalcalc}, so we obtain
  again the bound $2\sqrt{m-1}$.
Thus, we conclude that $\lambda_2(A)\le 2\sqrt{m-1}$ with nonzero probability.
Since $A$ is bipartite, its spectrum is symmetric about zero, so we must also
have $\lambda_{d-1}(A)\ge -2\sqrt{m-1}$, whence $A$ is Ramanujan.
\end{proof}
}{
The proof of Theorem \ref{thm:bipartite} is analogous, and appears in the full version.}

\bibliographystyle{alpha}
\bibliography{free}

\end{document}